\documentclass{amsart}
\usepackage{amsthm}
\usepackage{amsmath}
\usepackage{amsfonts}
\usepackage{amssymb} 
\usepackage{amscd}
\usepackage[all,cmtip]{xy}
\newtheorem{theorem}{Theorem}
\newtheorem{proposition}[theorem]{Proposition}
\newtheorem{lemma}[theorem]{Lemma}
\newtheorem{corollary}[theorem]{Corollary}
\newtheorem{conjecture}{Conjecture}

\begin{document}
\title{Iwasawa Invariants of Some Non-Cyclotomic $\mathbb Z_p$-extensions}
\author{David Hubbard}
\address{35 Holt Circle,  Hamilton, NJ 08619}
\email{dhubbard@erols.com} 
\author{Lawrence C. Washington}
\address{Department of Mathematics,
University of Maryland,
College Park, MD 20742}
\email{lcw@math.umd.edu}

\dedicatory{To the memory of Professor Kenkichi Iwasawa on the 100th anniversary of his birth.}
\begin{abstract}Iwasawa showed that there are non-cyclotomic $\mathbb Z_p$-extensions with
positive $\mu$-invariant. We show that these $\mu$-invariants can be evaluated explicitly in many situations
when $p=2$ and $p=3$.\end{abstract}
\maketitle

\section{Introduction}
Let $p$ be prime, let $K$ be a number field, and let $K_{\infty}/K$ be a $\mathbb Z_p$-extension of $K$.
Let $K_n$ be the subfield of $K_{\infty}$ that has degree $p^n$ over $K$ and let $p^{e_n}$
be the power of $p$ dividing the class number of $K_n$.
A well-known result of Iwasawa says that there exist integers $\lambda, \mu, \nu$, independent of $n$, 
such that
$$
e_n=\mu p^n +\lambda n + \nu
$$
for all sufficiently large $n$. When $K_{\infty}$ is the cyclotomic $\mathbb Z_p$-extension of $K$,
it is conjectured that $\mu=0$, and this has been proved \cite{FW} when $K/\mathbb Q$ is abelian.
In \cite{Iw}, Iwasawa showed that there exist non-cyclotomic $\mathbb Z_p$-extensions with $\mu>0$.
A natural question is what are the actual values of the Iwasawa invariants for these $\mathbb Z_p$-extensions?
In the following, we give examples of Iwasawa's construction when $p=3$ and $p=2$, and we show that in many cases it is possible to evaluate
the invariants $\lambda, \mu, \nu$ explicitly.

\section{Preliminaries}\label{SectPrelim}

For reference, here is the basic notation that will be used throughout the paper.
\begin{itemize}
\item $k_0$ is an imaginary quadratic field. Usually, $k_0=\mathbb Q(\sqrt{-1})$ or $\mathbb Q(\sqrt{-3})$.

\item $p$ is a prime number that is non-split in $k_0/\mathbb Q$.

\item $
k_0\subset k_1 \subset \cdots k_n \subset \cdots \subset k_{\infty}
$
is the anticyclotomic $\mathbb Z_p$-extension of $k_0$. This means that $\text{Gal}(k_0/\mathbb Q)$ acts by $-1$ on $\text{Gal}(k_{\infty}/k_0)$. 

\item $K_0/k_0$ is a Galois extension of degree $p$ such that $K_0$ is Galois over $\mathbb Q$ and
$$
K_n = K_0k_n.
$$
Therefore, $K_{\infty}/K_0$ is a $\mathbb Z_p$-extension. In this situation,
the complex conjugation that generates $\text{Gal}(k_0/\mathbb Q)$ can be lifted to
an automorphism $\sigma$ of order 2 of $K_{\infty}$ such that $\sigma \gamma\sigma = \gamma^{-1}$
for all $\gamma\in \text{Gal}(K_{\infty}/K_0)$.

\item $A_n$ is the $p$-Sylow subgroup of the ideal class group of $K_n$ and 
$$
X=\lim_{\leftarrow} A_n
$$
is the inverse limit of these groups with respect to the norm.

\item $\Lambda=\mathbb Z_p[[T]]$ and $\omega_n=(1+T)^{p^n}-1\in \Lambda$.
It is a standard fact that $X$ is a finitely generated $\Lambda$-torsion module.
\item $
\nu_n = \omega_n(T)/T = (1+T)^{p^n-1} + (1+T)^{p^n-2} +\cdots + (1+T) + 1.
$

\item $s=$ the number of primes that are inert in $k_0/\mathbb Q$ and ramify in $K_0/k_0$.

\item We use the notation $A_1 = 27\times 9^2\times 3^7$, for example, to indicate that $A_1$ is a product of a cyclic group
of order 27, two cyclic groups of order 9, and seven groups of order 3.
\end{itemize}
\medskip

A crucial fact is the following.
\begin{lemma}\label{LemmaSplit} Let $q\ne p$ be a prime that is inert in $k_0/\mathbb Q$. Then $q$ splits completely in $k_{\infty}/k_0$.
\end{lemma}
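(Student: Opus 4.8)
The plan is to exploit the generalized dihedral structure of $\operatorname{Gal}(k_\infty/\mathbb{Q})$ that the anticyclotomic hypothesis forces. Write $\Gamma=\operatorname{Gal}(k_\infty/k_0)\cong\mathbb{Z}_p$. First I would record that $k_\infty$ is Galois over $\mathbb{Q}$ (the anticyclotomic $\mathbb{Z}_p$-extension is canonical, hence stable under $\operatorname{Gal}(\overline{\mathbb{Q}}/\mathbb{Q})$), so that complex conjugation determines an involution $c\in\operatorname{Gal}(k_\infty/\mathbb{Q})$ whose restriction to $k_0$ is the nontrivial element of $\operatorname{Gal}(k_0/\mathbb{Q})$. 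By the defining property of the anticyclotomic extension, $c\gamma c^{-1}=\gamma^{-1}$ for every $\gamma\in\Gamma$; together with $c^2=1$ this exhibits $\operatorname{Gal}(k_\infty/\mathbb{Q})=\Gamma\rtimes\langle c\rangle$ as a (pro-)dihedral group.

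Next I would analyze the Frobenius at $q$. Since $q\neq p$ and ramification in a $\mathbb{Z}_p$-extension occurs only above $p$, the prime $q$ is unramified in $k_\infty/\mathbb{Q}$. Choose a prime $\mathfrak{Q}$ of $k_\infty$ above $q$ and let $\phi=\operatorname{Frob}(\mathfrak{Q}/q)\in\operatorname{Gal}(k_\infty/\mathbb{Q})$ be the corresponding Frobenius. Because $q$ is inert in $k_0/\mathbb{Q}$, the image of $\phi$ in $\operatorname{Gal}(k_0/\mathbb{Q})$ is the nontrivial element, i.e.\ the same image as $c$. Hence $\phi=\gamma c$ for a unique $\gamma\in\Gamma$.

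The key computation is then a one-line consequence of the dihedral relation:
$$
\phi^2=\gamma c\,\gamma c=\gamma\,(c\gamma c^{-1})\,c^2=\gamma\cdot\gamma^{-1}=1.
$$
Since $q$ is inert in $k_0$, the residue degree $f(\mathfrak{q}/q)$ of the prime $\mathfrak{q}=\mathfrak{Q}\cap k_0$ is $2$, so the Frobenius of $\mathfrak{Q}$ over $k_0$—which generates the decomposition group of $\mathfrak{q}$ in the extension $k_\infty/k_0$—is exactly $\operatorname{Frob}(\mathfrak{Q}/\mathfrak{q})=\phi^{\,2}=1$. Thus the decomposition group of $\mathfrak{q}$ in $\Gamma=\operatorname{Gal}(k_\infty/k_0)$ is trivial, and $\mathfrak{q}$ (equivalently $q$) splits completely in $k_\infty/k_0$.

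I do not expect a serious obstacle here: once the dihedral structure is in place the splitting is forced by the computation $\phi^2=1$. The only points requiring care are the bookkeeping between Frobenius elements over $\mathbb{Q}$ and over $k_0$—namely that $\operatorname{Frob}(\mathfrak{Q}/\mathfrak{q})=\phi^{f(\mathfrak{q}/q)}=\phi^2$—and the standing verification that $k_\infty/\mathbb{Q}$ is Galois, so that complex conjugation genuinely is an order-two element inverting $\Gamma$.
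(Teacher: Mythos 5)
Your argument is correct and is essentially the paper's own proof: both identify the relevant decomposition data at $q$ with an element of the coset $\Gamma c$ and use the dihedral relation $c\gamma c^{-1}=\gamma^{-1}$ to conclude that its square is trivial, hence that the decomposition group in $\Gamma$ vanishes. The only cosmetic difference is that you phrase this via the Frobenius $\phi$ and the identity $\operatorname{Frob}(\mathfrak{Q}/\mathfrak{q})=\phi^{2}$, while the paper works directly with the procyclic decomposition group and a topological generator $\sigma\gamma$.
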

\begin{proof} (cf. \cite[Lemma 10]{washthesis})
Let $\sigma\in \text{Gal}(k_{\infty}/\mathbb Q)$ have order 2 (for example, complex conjugation)
and let $\Gamma=\text{Gal}(k_{\infty}/k_0)$. Then $\text{Gal}(k_{\infty}/\mathbb Q) = \Gamma\cup \sigma\Gamma$, 
and $\sigma\gamma\sigma^{-1} = \gamma^{-1}$ for all $\gamma\in \Gamma$.

We are assuming that $q$ is inert in $k_0/\mathbb Q$ and that $q$ is unramified in $k_{\infty}/k_0$.
Fix a prime ${\mathcal{P}}$ of $k_{\infty}$ dividing $q$ and let $D\subseteq \text{Gal}(k_{\infty}/\mathbb Q)$ 
be the decomposition group for ${\mathcal{P}}$.  Since $q$ is unramified,
$D$ is procyclic.
Since $q$ is inert in $k_0$, which is the fixed field of $\Gamma$, it follows that $D$ is not contained in $\Gamma$.
Therefore, $D$ contains a topological generator not in $\Gamma$, say $\sigma\gamma$ with $\gamma\in \Gamma$.  
But $(\sigma\gamma)^2 = 1$, so $D$ has order 2. 
Therefore, $D\cap \Gamma=1$, which means that
$q$ splits completely in $k_{\infty}/k_0$. \end{proof}

\section{Iwasawa's Construction}

\begin{theorem}\label{Iw}
Suppose $s$ distinct primes $q\ne p$ are inert in $k_0/\mathbb Q$ and ramify in $K_0/k_0$.
 Then $\mu\ge s-1$ for the $\mathbb Z_p$-extension
$K_{\infty}/K_0$.
\end{theorem}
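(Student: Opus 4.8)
The plan is to produce a direct lower bound for the class-number growth and then read off $\mu$ from Iwasawa's formula $e_n=\mu p^n+\lambda n+\nu$. Concretely, I will show that $e_n=\log_p|A_n|\ge (s-1)p^n-1$ for every $n$; since this quantity must be $\le \mu p^n+\lambda n+\nu$ for all large $n$, and $p^n$ eventually dominates $\lambda n+\nu+1$, it forces $\mu\ge s-1$. All of the growth will be manufactured from ramification in the degree-$p$ extension $K_n/k_n$, fed by the splitting behavior of the $q_i$ supplied by Lemma \ref{LemmaSplit}.

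First I would count the primes of $k_n$ ramifying in $K_n/k_n$. Fix one of the chosen primes $q$. It is inert in $k_0/\mathbb Q$, so by Lemma \ref{LemmaSplit} it splits completely in $k_\infty/k_0$; in particular its prime of $k_0$ splits into $p^n$ distinct primes of $k_n$, unramified over $k_0$. Since $q$ ramifies in $K_0/k_0$ with index $p$ and $K_n=K_0k_n$, each of these $p^n$ primes of $k_n$ ramifies, necessarily with index $p$, in $K_n/k_n$. Letting $q$ range over the $s$ chosen primes, and noting the resulting primes lie over distinct rational primes, I obtain at least $sp^n$ ramified primes; write $t_n$ for the total number of ramified places, so $t_n\ge sp^n$.

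Next I would feed this into Chevalley's ambiguous class number formula for the cyclic degree-$p$ extension $K_n/k_n$ with group $G$. Taking $p$-parts, it reads
$$
|A_n^G| \;=\; |A_{k_n}|\,\frac{p^{t_n}}{p\cdot u_n},
\qquad u_n=\bigl[\,O_{k_n}^\times : O_{k_n}^\times\cap N_{K_n/k_n}K_n^\times\,\bigr],
$$
where $A_{k_n}$ is the $p$-part of the class group of $k_n$ and each ramified prime contributes a factor $p$ to $\prod_v e_v$. The decisive numerical point is the size of $u_n$. Because $N_{K_n/k_n}(u)=u^{p}$ for $u\in O_{k_n}^\times$, we have $(O_{k_n}^\times)^{p}\subseteq N_{K_n/k_n}K_n^\times$, so $u_n$ divides $[\,O_{k_n}^\times:(O_{k_n}^\times)^{p}\,]=p^{\,r_n+\epsilon_n}$ with $\epsilon_n\in\{0,1\}$ and $r_n$ the unit rank of $k_n$. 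As $k_n$ is totally imaginary of degree $2p^n$ over $\mathbb Q$, we have $r_n=p^n-1$, hence $u_n\le p^{p^n}$. Combining, $|A_n^G|\ge p^{\,sp^n-1-(p^n-1+\epsilon_n)}\ge p^{(s-1)p^n-1}$, and since $A_n^G\subseteq A_n$ this gives $e_n\ge(s-1)p^n-1$, completing the argument as in the first paragraph.

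The main obstacle is precisely the unit index $u_n$: the unit rank of $k_n$ grows like $p^n$, so a priori the denominator in Chevalley's formula could swallow the entire ramification contribution. The crux is the observation that ramification furnishes $p^{sp^n}$ while the units cost at most $p^{p^n}$, leaving a guaranteed surplus of $(s-1)p^n$. One must also check that all $t_n$ ramified primes are genuinely distinct and that the $p$-parts are extracted correctly from Chevalley's formula, but the prime-to-$p$ and infinite-place contributions are harmless for a lower bound.
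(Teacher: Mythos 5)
Your proposal is correct and follows essentially the same route as the paper: apply Chevalley's ambiguous class number formula to $K_n/k_n$, use Lemma \ref{LemmaSplit} to get at least $sp^n$ ramified primes contributing $p^{sp^n}$ to the numerator, and bound the unit index by $p^{p^n}$ via the unit rank $p^n-1$ of the totally imaginary field $k_n$, yielding $e_n\ge (s-1)p^n-1$ and hence $\mu\ge s-1$. Your explicit extraction of $p$-parts and the closing remark about $\lambda n+\nu$ being dominated by $p^n$ merely make precise what the paper leaves implicit.
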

\begin{proof}
We need the following result of Chevalley \cite{Ch} (see also \cite{Lem}):
\begin{proposition}\label{Chev} Let $L/K$ be a cyclic extension of degree $n$ of number fields, let $G=\text{Gal}(L/K)$, let $h$ be the class number of $K$,
let $C^{G}$ be the set of ideal classes
of $L$ that are fixed by $G$, let $N$ be the norm from $L$ to $K$,  let $E$ denote the group of
units of the ring of integers of $K$, let $S$ be the set of
primes of $K$ that ramify in $L/K$, and let $e_{\mathcal{P}}$ be the ramification index of such a prime. Then
$$
|C^G| = \frac{h \prod_{{\mathcal{P}}\in S} e_{\mathcal{P}} }{n[E: E\cap N(L^{\times})]}.
$$
\end{proposition}

We apply this result to $K_n/k_n$.
 By Lemma \ref{LemmaSplit}, each prime $q$
splits completely in $k_n/k_0$ and the primes over $q$ ramify in $K_n/k_n$. Therefore, there are at least $s\times p^n$ primes of 
$k_n$ that ramify in $K_n/k_n$. This means that $p^{sp^n}$ divides  $\prod e_{\mathcal{P}}$ in the numerator of Chevalley's formula for $K_n/k_n$.
Since $k_n$ is a totally complex extension of $\mathbb Q$ of degree $2\times p^n$, the unit group (including the roots of unity) mod $p$th powers
has $p^n$ or $p^n-1$ generators, depending on whether $k_0$ contains or does not contain a $p$th root of unity. 
But the $p$th power of a unit of $k_n$ is automatically a norm from $K_n$. Therefore, $[E: E\cap N(K_n^{\times})]\mid p^{p^n}$.
Putting everything together, we find that 
$$
 p^{(s-1)p^n-1}\mid |C^G|,
$$
where  $C^G$ is the group of fixed ideal classes $C^G$ of $K_n$. Let $p^{e_n}$ be the power of $p$ dividing the class number of $K_n$.
Then $e_n\ge (s-1)p^n-1$, which yields the result.  \end{proof}

In Section \ref{SectComp}, we give 
explicit values of $\mu$ for some choices of $K_0$. 

A natural question is whether the estimate $\mu\ge s-1$ is sharp. In the examples in Section \ref{SectComp} for which we are able to evaluate $\mu$
exactly, the value is always $s-1$, but our methods will never give the exact value if $\mu > s-1$, so this is not evidence. We found
one example for $p=2$ ($d=3\cdot 17\cdot 19$) for which we suspect $\mu=s$. In Section \ref{SectAmbig},
we discuss what might cause $\mu$ to be larger than $s-1$.

\section{Initial Layers}

\subsection{$p=3$}

For computations, we need to know $k_1$ explicitly. The following lemma is contained in \cite{Oh-Kim} and \cite{VH}, 
but we include the proof for convenience.
\begin{lemma}\label{LemmaKummer}   Let $k_0=\mathbb Q(\sqrt{-3})$ and let $k_0\subset k_1 \cdots \subset k_{\infty}$
be the anticyclotomic $\mathbb Z_3$-extension of $k_0$. Then $k_1=k_0(\sqrt[3]{3})$.\end{lemma}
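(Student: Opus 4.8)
The plan is to use Kummer theory, which applies because $k_0=\mathbb{Q}(\sqrt{-3})$ contains the cube roots of unity. Since every $\mathbb{Z}_3$-extension is unramified outside the primes above $3$ (tame inertia cannot survive in a pro-$3$ extension), the first layer $k_1/k_0$ is unramified outside the unique prime $\lambda=(\sqrt{-3})$ above $3$; being cyclic of degree $3$, I can write $k_1=k_0(\sqrt[3]{\alpha})$ for some $\alpha\in k_0^{\times}$. First I would reduce $\alpha$ modulo cubes: unramifiedness outside $\lambda$ forces $\mathrm{ord}_{\mathfrak{q}}(\alpha)\equiv 0\pmod 3$ for every $\mathfrak{q}\ne\lambda$, and since $k_0$ has class number $1$ this gives $\alpha\equiv\zeta_3^{\,c}(\sqrt{-3})^{\,b}\pmod{(k_0^{\times})^3}$, using that the unit group $\mu_6$ reduces to $\langle\zeta_3\rangle$ modulo cubes.

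Next I would translate the anticyclotomic condition into a condition on $\alpha$. Let $\tau$ denote complex conjugation and $\gamma$ a generator of $\mathrm{Gal}(k_1/k_0)$, identified with $\gamma(\sqrt[3]{\alpha})/\sqrt[3]{\alpha}\in\mu_3$. A direct computation of $\tau\gamma\tau^{-1}$ applied to $\sqrt[3]{\alpha}$, using $\tau(\alpha)\equiv\alpha^{j}$ together with $\tau(\zeta_3)=\zeta_3^{-1}$, shows that $\tau\gamma\tau^{-1}=\gamma^{-j}$; hence the requirement $\tau\gamma\tau^{-1}=\gamma^{-1}$ defining the anticyclotomic extension is equivalent to $\tau(\alpha)\equiv\alpha\pmod{(k_0^{\times})^3}$. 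I expect this eigenvalue bookkeeping to be the main obstacle, since the cyclotomic twist $\tau(\zeta_3)=\zeta_3^{-1}$ turns the naively expected $-1$ eigenspace into the $+1$ eigenspace, and a sign slip here would single out the wrong field.

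Finally I would evaluate the action of $\tau$ on the two generators. We have $\tau(\zeta_3)=\zeta_3^{-1}$, so $\zeta_3$ lies in the $-1$ eigenspace, whereas $\tau(\sqrt{-3})=-\sqrt{-3}\equiv\sqrt{-3}\pmod{(k_0^{\times})^3}$ because $-1=(-1)^3$ is a cube, so $\sqrt{-3}$ lies in the $+1$ eigenspace. Thus $\tau(\alpha)\equiv\alpha$ forces $c\equiv 0$ and $\alpha\equiv(\sqrt{-3})^{\,b}$, leaving the single extension $k_0(\sqrt[3]{\sqrt{-3}})$ (the cases $b=1,2$ give the same field). To conclude, I would note that $3=-(\sqrt{-3})^2$, so $3\equiv(\sqrt{-3})^{-1}\pmod{(k_0^{\times})^3}$ since $-1$ is a cube; hence $k_0(\sqrt[3]{3})=k_0(\sqrt[3]{\sqrt{-3}})$, and as this is the unique degree-$3$ extension of $k_0$ satisfying all the required properties, it must equal $k_1$.
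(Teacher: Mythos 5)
Your proof is correct and follows essentially the same route as the paper: Kummer theory together with the Galois-equivariance of the Kummer pairing, which converts the anticyclotomic condition $\tau\gamma\tau^{-1}=\gamma^{-1}$ into the requirement that the Kummer generator be $\tau$-invariant modulo cubes. The only difference is in the final reduction, and it is cosmetic: the paper uses Hilbert's Theorem 90 to descend the generator to a positive cube-free rational integer before invoking ramification, whereas you first pin the generator down to $\zeta_3^{c}(\sqrt{-3})^{b}$ using ramification outside $3$ together with the class number and unit group of $k_0$, and then apply the eigenspace condition to kill the $\zeta_3$ factor; both reductions are valid and lead to $k_1=k_0(\sqrt[3]{3})$.
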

\begin{proof} Kummer theory says that there exists $\beta\in k_0$ such that $k_1=k_0(\sqrt[3]{\beta})$. 
The Kummer pairing
$$
\text{Gal}(k_1/k_0) \times \langle \beta\rangle (k_0^{\times})^3/(k_0^{\times})^3 \rightarrow \mu_3
$$
satisfies 
\begin{align*}
\langle \gamma, \sigma(\beta)\rangle^{-1}&=\langle \gamma^{-1}, \sigma(\beta)\rangle=\langle \gamma^{\sigma}, \sigma(\beta)\rangle\\
 &= \sigma \langle \gamma, \beta \rangle = \langle \gamma, \beta \rangle^{-1}
\end{align*}
for all $\gamma\in \text{Gal}(k_1/k_0)$. The non-degeneracy of the pairing implies that 
$\sigma(\beta) = \beta \alpha^3$ for some $\alpha\in k_0$. Taking the norm to $\mathbb Q$ of both sides yields $N(\alpha)=1$.
Hilbert's Theorem 90 says $\alpha=\delta^{1-\sigma}$ for some $\delta\in k_0$, which yields $\sigma(\beta\delta^3) = \beta\delta^3$.
Therefore, we may modify $\beta$ by a cube and assume it is rational, and in  fact a positive cube-free integer. 
Since a $\mathbb Z_3$-extension is unramified outside 3, the generator $\beta$ 
cannot have prime factors other than 3, so we can take $\beta=3$.\end{proof}

Van Huele \cite{VH} also shows that $X^9+9X^6+27X^3+3$ generates $k_2/k_0$ . He also gives a polynomial generating $k_3/k_0$.

\subsection{$p=2$}

\begin{proposition} Let $k_0=\mathbb Q(i)$ and let $k_0\subset k_1 \cdots \subset k_{\infty}$
be the anticyclotomic $\mathbb Z_2$-extension of $k_0$. Then
$$
k_1=k_0(\sqrt{-2})=\mathbb Q(\zeta_8), \quad k_2= k_0(\sqrt[4]{-2}), \quad k_3 = k_0(\sqrt[8]{-2}).
$$
\end{proposition}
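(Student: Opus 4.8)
The plan is to handle $k_1$ by the Kummer-theoretic method of Lemma~\ref{LemmaKummer}, and then to identify $k_2$ and $k_3$ by checking that the explicit radical fields satisfy all of the properties that characterize the layers of the anticyclotomic $\mathbb{Z}_2$-extension.

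For $k_1$, I would repeat the argument of Lemma~\ref{LemmaKummer} with $p=2$. Writing $k_1=k_0(\sqrt{\beta})$, the Kummer pairing together with $\sigma\gamma\sigma=\gamma^{-1}$ forces $\sigma(\beta)/\beta\in(k_0^\times)^2$, say $\sigma(\beta)=\beta\alpha^2$. Taking the norm to $\mathbb{Q}$ gives $N(\alpha)^2=1$, and since every nonzero norm from $\mathbb{Q}(i)$ is positive we get $N(\alpha)=1$; Hilbert 90 then lets me absorb $\alpha$ into $\beta$ and take $\beta$ to be a squarefree rational integer. Being unramified outside $2$ forces $\beta\in\{\pm1,\pm2\}$, and since $-1=i^2$ is a square in $k_0$ the only nontrivial class is $\beta=2$. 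Hence $k_1=k_0(\sqrt{2})=k_0(\sqrt{-2})=\mathbb{Q}(i,\sqrt2)=\mathbb{Q}(\zeta_8)$, using $\zeta_8=(1+i)/\sqrt2$.

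For $k_2$ and $k_3$, set $\eta_n=\sqrt[2^n]{-2}$ and $M_n=k_0(\eta_n)$, and verify in turn: (i) $M_n/\mathbb{Q}$ is Galois, being the splitting field of $x^{2^n}+2$ — here $\sqrt2\in M_n$ produces $\zeta_8$, so $\mu_{2^n}\subset M_n$ for $n\le 3$, and this is exactly where the restriction $n\le 3$ enters, since $\zeta_{16}\notin M_4$; (ii) $[M_n:k_0]=2^n$, obtained by climbing the quadratic steps of the tower and using that for $n\ge 2$ the radical $2^{1/2^n}$ generates a non-abelian, hence non-cyclotomic, extension of $\mathbb{Q}$ that cannot sink into $\mathbb{Q}(\zeta_8)$; (iii) $\text{Gal}(M_n/k_0)$ is cyclic of order $2^n$ with $\sigma$ acting by $-1$. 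The heart is (iii) for $n=3$: every automorphism fixing $k_0$ has the form $\eta\mapsto\zeta_8^a\eta$, $\zeta_8\mapsto(-1)^a\zeta_8$ (the sign being forced by $\eta^4=i\sqrt2$), the element $g\colon\eta\mapsto\zeta_8\eta,\ \zeta_8\mapsto-\zeta_8$ has order $8$, and the computation $\sigma(\eta)=\zeta_8^{-1}\eta$, $\sigma(\zeta_8)=\zeta_8^{-1}$ gives $\sigma g\sigma^{-1}=g^{-1}$. Thus $\text{Gal}(M_3/\mathbb{Q})$ is dihedral of order $16$, $k_0$ is the fixed field of its cyclic part, $M_2=k_0(\sqrt[4]{-2})$ is the fixed field of $g^4$, and the tower $k_1\subset M_2\subset M_3$ is nested. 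Each $M_n/k_0$ is unramified outside $2$ because $-2$ is a unit times a power of $1+i$.

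Finally I would promote these properties to the identification $M_n=k_n$. Since $M_n/k_0$ and $k_\infty/k_0$ are both abelian, everything lives in the maximal abelian $2$-extension of $k_0$ unramified outside $2$, so the identification can be phrased through class field theory: the anticyclotomic $\mathbb{Z}_2$-extension is the $\mathbb{Z}_2$-part of the $(-1)$-eigenspace of $\sigma$ on the corresponding ray class group, and its degree-$2^n$ layer is the unique cyclic degree-$2^n$ subextension on which $\sigma=-1$. I expect the genuinely delicate point to be exactly this last step — not the formal verifications (i)--(iii), which are bookkeeping once (iii) is set up, but the claim that the minus part has $\mathbb{Z}_2$-corank $1$ with no stray anticyclotomic summand, so that a cyclic extension of the right degree with $\sigma=-1$ and the right ramification is forced to coincide with the layer $k_n$ rather than merely to resemble it.
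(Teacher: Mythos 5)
Your route is genuinely different from the paper's: the paper runs a Kummer descent on $k_n$ itself (writing $k_2=k_0(\sqrt[4]{\alpha})$, $k_3=k_1(\sqrt[4]{\beta})$ and eliminating candidate radicands using ramification, Galois-ness over $\mathbb Q$ and over $\mathbb Q(\sqrt2)$, a Pari-GP check, and finally the anticyclotomic condition to kill $\beta=\sqrt2$), whereas you build the candidate tower $M_n=k_0(\sqrt[2^n]{-2})$ from below and then invoke a rigidity statement to conclude $M_n=k_n$. Your steps (i)--(iii) are fine (the dihedral computation for $M_3$ checks out), and your $k_1$ argument is a correct $p=2$ adaptation of Lemma~\ref{LemmaKummer}. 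But the final identification is a genuine gap, and it is exactly where you suspected: the claim that a cyclic degree-$2^n$ extension of $k_0$, unramified outside $2$, with $\sigma$ acting by $-1$, is \emph{unique} is not automatic. Writing $G=\text{Gal}(L/k_0)\cong\mathbb Z_2^2$ for the maximal abelian pro-$2$ extension of $\mathbb Q(i)$ unramified outside $2$, such extensions correspond to surjections of $G/(1+\sigma)G$ onto $\mathbb Z/2^n$, and for the split $\sigma$-lattice $\mathbb Z_2^{+}\oplus\mathbb Z_2^{-}$ one has $G/(1+\sigma)G\cong\mathbb Z_2\times\mathbb Z/2$, which admits two distinct such surjections for every $n\ge 1$. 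So uniqueness requires knowing that $G$ is the regular representation $\mathbb Z_2[\sigma]$ rather than the split lattice, and you assert rather than prove this.

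The gap is closeable, and in fact your own $k_1$ step supplies the needed input. That step shows that $\mathbb Q(\zeta_8)$ is the \emph{only} quadratic extension of $k_0$ unramified outside $2$ that is Galois over $\mathbb Q$ (the other two candidates $k_0(\sqrt{1+i})$ and $k_0(\sqrt{i(1+i)})$ fail). Now argue by induction: if $M_{n-1}=k_{n-1}$ but $M_n\ne k_n$, the compositum $M_nk_n$ is abelian over $k_0$ with group $\mathbb Z/2^n\times\mathbb Z/2$ (not cyclic, since it has two distinct cyclic quotients of order $2^n$), unramified outside $2$, and $\sigma$ acts by $-1$ on it via the diagonal embedding into $\text{Gal}(M_n/k_0)\times\text{Gal}(k_n/k_0)$. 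Its maximal exponent-$2$ quotient is $(\mathbb Z/2)^2$, on which $\sigma$ acts trivially, so all three of its quadratic subextensions of $k_0$ are Galois over $\mathbb Q$ and unramified outside $2$ --- contradicting uniqueness of $\mathbb Q(\zeta_8)$. With this paragraph added your proof is complete and arguably more self-contained than the paper's (no computer verification, no case analysis of four radicands); without it, the proposition is not established.
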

\begin{proof}
Of course, the statements about $k_1$ and $k_2$ follow from the one about $k_3$, but we need to prove them
in order to obtain $k_3$.

First, $k_1$ is Galois over $\mathbb Q$ of degree 4, is unramified outside 2, and contains $\mathbb Q(i)$, so we must have 
$k_1=\mathbb Q(\zeta_8)$.

Write $k_2=k_0(\sqrt[4]{\alpha})$ for some $\alpha\in k_0$. Since $k_2/k_0$ is unramified outside 2,
we may assume that $\alpha=i^a(1+i)^b$ for some $a, b$ that we can take mod 4. Because $k_2/\mathbb Q$ is Galois,
$\sqrt[4]{\overline{\alpha}}$ must generate the same extension, so either $\alpha/\overline{\alpha}$
or $\alpha\overline{\alpha}$ is a fourth power in $k_0$.

If $\alpha\overline{\alpha}=2^b$ is a fourth power, then $b$ is a multiple of 4, so we are reduced to considering
$\alpha=i^a$. But this yields $k_2\subseteq \mathbb Q(\zeta_{16})$, which contradicts the fact that $k_2/\mathbb Q$
is non-abelian.

If $\alpha/\overline{\alpha}= i^{2a+b}$ is a fourth power, then $2a+b\equiv 0\pmod 4$, so we may assume that
$b=2a$. This yields $\alpha = (-2)^a$. Since $a=0$ and $a=2$ do not yield extensions of degree 4,
we must have $a=1$ or $3$, both of which give $k_2=k_0(\sqrt[4]{-2})$.

Write $k_3=k_1(\sqrt[4]{\beta})$ for some $\beta\in k_1$. Because $k_3/k_1$ is unramified outside 2, and
every unit of $\mathbb Z[\zeta_8]$ is of the form $\zeta_8^a(1+\sqrt{2})^b$ (a root of unity times a unit 
from the maximal real subfield), we may assume that
$$
\beta=\zeta_8^a(1+\sqrt{2})^b(1-\zeta_8)^c.
$$
Since $k_3/\mathbb Q$ is Galois, we must have $\beta\overline{\beta}$ or $\beta/\overline{\beta}$ equal to
a fourth power in $k_1$. 

If $\beta\bar{\beta}= (1+\sqrt{2})^{2b-1}(\sqrt{2})^{c}$ is a fourth power, then 2-adic valuations show
that $c$ must be even. If $\beta/\overline{\beta}=\zeta_8^{2a-3c}$ is a fourth power, then $c$ is even.
Therefore, in both cases we have that $c$ is even. Since $(1-\zeta_8)^2/\sqrt{2}$ is a unit, we can start over
and write
$$
\beta=\zeta_8^e(1+\sqrt{2})^f (\sqrt{2})^g.
$$

Suppose $\beta\overline{\beta}= (1+\sqrt{2})^{2f}(\sqrt{2})^{2g}$ is a fourth power in $k_1$. 
Taking the norm to $k_0$ yields that $2^{2g}$ is a fourth power in $k_0$, so $g$ is even, hence $(1+\sqrt{2})^{2f}$
is a fourth power.
But $1+\sqrt{2}$ is the fundamental unit of $\mathbb Z[\zeta_8]$, so $f$ is even.
But if both $f$ and $g$ are even, $k_2=k_1(\sqrt{\beta})\subseteq \mathbb Q(\zeta_{16})$, which is impossible
since $k_2/\mathbb Q$ is non-abelian. 

Therefore, $\beta/\overline{\beta} = \zeta_8^{2e}$ is a fourth power in $k_1$, which implies that $e$ is even.
Since $-1=\zeta_8^4\in k_1$, we may assume that $\zeta_8^e= 1$ or $i$.
Since $k_1(\sqrt[4]{-2})= k_2=k_1(\sqrt{\beta})$, it follows that $\sqrt{-2}\beta$ is a square in $k_1$.
Taking the norm to $k_0$ yields that $\pm 2^{g+1}$ is a square in $k_0$, so $g$ is odd. 

Let $\beta'$ be the conjugate of $\beta$ under the map $\sqrt{2}\mapsto -\sqrt{2}$, $i\mapsto i$. 
Since $k_3/\mathbb Q(\sqrt{2})$ is Galois, we must have either $\beta\beta'= \pm 2^g$ a fourth power in $k_1$ or 
$\beta/\beta'=\pm (1+\sqrt{2})^{2f}$ a fourth power in $k_1$. The first case is impossible since $g$ is odd.
The second case implies that $f$ is even, since $1+\sqrt{2}$ is a fundamental unit.

We now have that $g$ is odd and $f$ is even. Changing $\beta$
to $\beta^3$ if necessary and removing fourth powers means that we can assume $g=1$ and $f=0$ or $2$.
We therefore have the following choices for $\beta$:
$$
\beta_1 = \sqrt{2}, \quad \beta_2 = i\sqrt{2}, \quad \beta_3 = (1+\sqrt{2})^2\sqrt{2}, 
\quad \beta_4 = i(1+\sqrt{2})^2\sqrt{2}.
$$
A calculation in Pari-GP shows that $k_1(\sqrt[4]{\beta_1})=k_1(\sqrt[4]{\beta_4})$ and
 $k_1(\sqrt[4]{\beta_2})=k_1(\sqrt[4]{\beta_3})$, so we need to consider only $\beta_1$ and $\beta_2$.

Suppose $\beta = \beta_1$.   
Let $\sigma$ generate $\text{Gal}(k_3/k_0)$.  
Then $\sigma: \sqrt[8]{2}\mapsto \zeta_8^a\sqrt[8]{2}$ for some $a$. If $a$ is even then $\sigma^4=1$, so we must have $a$ odd. Therefore,
$\sigma(\sqrt{2}) = -\sqrt{2}$, so 
$$
\sigma(\zeta_8) = \sigma((1+i)/\sqrt{2}) = -\zeta_8.
$$
Let $J\in \text{Gal}(k_3/\mathbb Q)$ be complex conjugation 
(we assume that $k_3$ has been embedded into $\mathbb C$).
Then 
$$
J(\sqrt[8]{2}) = \sqrt[8]{2}, \quad J(\zeta_8)=\zeta_8^{-1}.
$$
A calculation shows that $(\sigma J)^2(\sqrt[8]{2}) = -\sqrt[8]{2}$, so $\sigma J$ does not have order 2.
Therefore, $J$ does not act as $-1$ on $\text{Gal}(k_3/k_0)$, contradicting the definition of the anticyclotomic
$\mathbb Z_2$-extension. Therefore, $\beta\ne \beta_1$.

The only remaining possibility is $\beta=\beta_2 = i\sqrt{2}= \sqrt{-2}$. This completes the proof. \end{proof}

Note that $k_1=k_0(\sqrt{i})$ and a calculation shows that $k_2=k_1\big(\sqrt{\zeta_8(1+\sqrt{2})}\big)$. Therefore, both extensions
are obtained by adjoining the square root of a unit. In fact, this holds more generally.

\begin{proposition}\label{sqrtunit} Let $n\ge 0$. Then there exists a unit $u_n \in k_n$ such that $k_{n+1}=k_n(\sqrt{u_n})$.
\end{proposition}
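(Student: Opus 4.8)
The plan is to combine Kummer theory with the anticyclotomic symmetry, so as to reduce the statement to two facts about $k_n$: that the Kummer generator can be taken real, and that $h(k_n)$ is odd. Write $k_{n+1}=k_n(\sqrt\beta)$ with $\beta\in k_n^\times$. Since $k_{n+1}/k_n$ is a step in a $\mathbb Z_2$-extension, it is unramified outside the prime $\mathfrak p$ of $k_n$ above $2$, so $v_{\mathfrak q}(\beta)$ is even for every $\mathfrak q\nmid 2$; only $v_{\mathfrak p}(\beta)$ can be odd. To control this I would first run the Kummer-pairing argument of Lemma \ref{LemmaKummer}: complex conjugation $\sigma$ acts by $-1$ on $\mathrm{Gal}(k_\infty/k_0)$, hence trivially on the order-$2$ group $\mathrm{Gal}(k_{n+1}/k_n)$, and since $\sigma$ fixes $\mu_2$ the pairing gives $\langle\gamma,\sigma\beta\rangle=\langle\gamma,\beta\rangle$ for all $\gamma$. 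Nondegeneracy then yields $\sigma\beta\equiv\beta\pmod{(k_n^\times)^2}$, say $\sigma\beta=\beta c^2$.

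Next I would make $\beta$ real. Applying $\sigma$ again gives $(c\,\sigma c)^2=1$, and since $c\,\sigma c$ equals $|c|^2$ under every archimedean embedding it is totally positive, hence equal to $+1$; Hilbert 90 for the CM extension $k_n/k_n^+$ then writes $c=\sigma(d)/d$, so replacing $\beta$ by $\beta d^{-2}$ (a change by a square) puts $\beta\in k_n^+$. Because $2$ is ramified in $k_0/\mathbb Q$ and $\mathfrak p_0$ is totally ramified in the anticyclotomic tower, $2$ is totally ramified in $k_n/\mathbb Q$; in particular $\mathfrak p$ is ramified over its restriction $\mathfrak p^+$ to $k_n^+$ with $e=2$. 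Hence for the now-real $\beta$ we have $v_{\mathfrak p}(\beta)=2\,v_{\mathfrak p^+}(\beta)$, which is even. Combined with the previous paragraph, every valuation of $\beta$ is even, so $(\beta)=\mathfrak b^2$ for some ideal $\mathfrak b$.

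It remains to show $\mathfrak b$ is principal, say $(\delta)=\mathfrak b$, for then $u_n:=\beta/\delta^2$ is a unit with $k_{n+1}=k_n(\sqrt{u_n})$. This is the step I expect to be the main obstacle, and I would resolve it by showing $h(k_n)$ is odd via Chevalley's formula (Proposition \ref{Chev}) applied to the cyclic extension $k_n/k_0$: the only ramified prime is $\mathfrak p_0$, with $e_{\mathfrak p_0}=2^n$, and $h(k_0)=1$, so
$$
|C^G|=\frac{h(k_0)\,e_{\mathfrak p_0}}{2^n\,[E_0:E_0\cap N(k_n^\times)]}=\frac{1}{[E_0:E_0\cap N(k_n^\times)]}\le 1,
$$
forcing $|C^G|=1$ for $G=\mathrm{Gal}(k_n/k_0)$ and $C^G=\mathrm{Cl}(k_n)^G$. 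Since $G$ is a nontrivial $2$-group (for $n\ge 1$), and a nontrivial $2$-group acting on a nontrivial finite abelian $2$-group has nontrivial fixed points, the vanishing of $C^G$ forces the $2$-part of $\mathrm{Cl}(k_n)$ to be trivial; thus $h(k_n)$ is odd. Then $[\mathfrak b]^2=[(\beta)]=1$ gives $[\mathfrak b]=1$, completing the argument; the case $n=0$ is the already-noted $k_1=k_0(\sqrt i)$, with $h(k_0)=1$.
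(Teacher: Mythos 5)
Your proof is correct and follows essentially the same route as the paper's: make the Kummer generator $\beta$ real (i.e.\ $\sigma$-invariant) via Hilbert 90 for $k_n/k_n^+$, use total ramification at $2$ together with unramifiedness elsewhere to show $(\beta)$ is the square of an ideal, and use the oddness of $h(k_n)$ to make that ideal principal, leaving a unit times a square. The only addition is that you actually prove $h(k_n)$ is odd via Chevalley's formula applied to $k_n/k_0$ — a fact the paper asserts without proof — and that argument is correct.
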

\begin{proof}
 Let $\sigma$ be complex conjugation
(after a complex embedding is fixed).
The extension is generated by $\sqrt{\beta}$ for some $\beta$.
Because $\sigma$ maps $k_n$ and $k_{n+1}$ to themselves, $\sigma \beta = \beta \alpha^2$ for some $\alpha\in k_n$.
Therefore, $(\alpha^{1+\sigma })^2=1$.
Since $x^{1+\sigma} = |x|^2 > 0$ for all $x$, we must have
$\alpha^{1+ \sigma} = 1$. Let $F_n$ be the fixed field of $\sigma$. Hilbert's Theorem 90 applied to $k_n/F_n$
says that $\alpha = \gamma^{1-\sigma}$ for some $\gamma\in k_{n}$,
so $\beta \gamma^2$ is fixed by $\sigma$.
Therefore, we may assume that $\beta$ lies in $F_n$.
Since $k_n/F_n$ is totally ramified at 2, the valuation
of $\beta$ at each prime of $k_n$ above 2 is even. The valuation at all other primes 
must be even because $k_{n+1}/k_n$ is unramified outside 2. Therefore, $(\beta)$ is the square of an ideal
in $k_n$. The class number of $k_n$ is odd, so we may raise
$\beta$ to an odd power and assume that the ideal $(\beta)$ is the square
of a principal ideal in $k_n$. This means that $\beta$ is a square times a unit, which yields the result.\end{proof}

\section{The $\mathbb{\lambda}$ invariant}

In this section, we prove that, in some situations we consider, $\lambda$ is even, and that if $\lambda \ne 0$
then this contributes to $e_1-e_0$ (that is, we do not have to wait until larger $n$ to see a contribution
to $e_n$).

\begin{theorem}\label{PropEven} Assume that $K_{\infty}/K_0$ is ramified at only one prime.
Then $\lambda$ is even for the $\mathbb Z_p$-extension $K_{\infty}/K_0$.
\end{theorem}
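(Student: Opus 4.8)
The plan is to use the order-two automorphism $\sigma$ to produce a functional equation for the characteristic power series of $X$, and then to show that this functional equation forces the roots of the distinguished part to occur in pairs, so that $\lambda$ is even. First I would record the $\sigma$-action on $X$. Since $\sigma$ is an automorphism of order $2$ of the whole tower, it acts on each $A_n$ and hence on $X$, and the defining relation $\sigma\gamma\sigma=\gamma^{-1}$ for $\gamma\in\Gamma=\text{Gal}(K_\infty/K_0)$ translates, after writing the $\Gamma$-action through $\Lambda$ with $1+T=\gamma_0$ a topological generator, into the statement that $\sigma$ is semilinear over the $\mathbb Z_p$-algebra involution $\iota$ of $\Lambda$ determined by $\iota(1+T)=(1+T)^{-1}$, i.e. $\iota(T)=-T/(1+T)$. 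Concretely $\sigma(\xi x)=\iota(\xi)\,\sigma(x)$ for $\xi\in\Lambda$ and $x\in X$, so $\sigma$ is a $\Lambda$-isomorphism from $X$ onto the module $X^{\iota}$ obtained by twisting the action by $\iota$. Comparing characteristic ideals then yields the functional equation $\iota(f)\sim f$, where $f$ generates $\text{char}_\Lambda(X)$ and $\sim$ means equality up to a unit of $\Lambda$.

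Next I would pass to roots. By Weierstrass preparation write $f=p^{\mu}UP$ with $U\in\Lambda^{\times}$ and $P$ distinguished of degree $\lambda$; the zeros of $f$ in the maximal ideal $\mathfrak m$ of the ring of integers of $\mathbb C_p$ are exactly the zeros $Z$ of $P$, counted with multiplicity, since $p^{\mu}$ and $U$ have no zeros there. As $\iota$ maps $\mathfrak m$ to itself ($1+T$ being a unit when $v(T)>0$) and $\iota(f)\sim f$, comparing zero multisets gives $\iota(Z)=Z$, so the involution $r\mapsto\iota(r)$ permutes $Z$ preserving multiplicities. A direct computation shows that the fixed points of $\iota$ on $\mathfrak m$ are the solutions of $r^{2}+2r=0$ lying in $\mathfrak m$, namely $r=0$ always and, when $p=2$, also $r=-2$.

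The crux is to rule these out as zeros of $P$, and this is exactly where the single-prime hypothesis enters. Because the unique ramified prime is totally ramified, the standard descent theory for $\mathbb Z_p$-extensions with a single totally ramified prime gives $X/\omega_n X\cong A_n$; in particular $X/\omega_1 X\cong A_1$ is finite, so $\omega_1=(1+T)^{p}-1$ is coprime to $f$ and no zero of $P$ is a zero of $\omega_1$. But each candidate fixed point is a zero of $\omega_1$: indeed $\omega_1(0)=0$ always, and for $p=2$ also $\omega_1(-2)=(-1)^{2}-1=0$. Hence $\iota$ acts on $Z$ without fixed points, $Z$ breaks into two-element orbits of equal multiplicity, and $\lambda=|Z|$ is even.

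I expect the main obstacle to be precisely the control of the fixed-point multiplicity: the functional equation by itself gives the parity of $\lambda$ only modulo the order of vanishing of $P$ at the $\iota$-fixed points, and it is the assumption of a single totally ramified prime, through the finiteness of $X/\omega_1 X$, that eliminates these potential trivial zeros (with several ramified primes one would expect genus-type zeros at roots of unity that could sit at a fixed point and spoil the parity). A secondary point to be careful about is the case $p=2$, where the extra fixed point $T=-2$ forces one to invoke coprimality with $\omega_1$ rather than merely with $\omega_0=T$.
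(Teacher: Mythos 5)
Your proof is correct, but it takes a genuinely different route from the one in the paper. You encode the relation $\sigma\gamma\sigma=\gamma^{-1}$ as a $\Lambda$-semilinear symmetry of $X$ over the involution $\iota$ with $\iota(1+T)=(1+T)^{-1}$, deduce the functional equation $\iota(f)\sim f$ for the characteristic power series, and then count zeros of the distinguished part in the maximal ideal of $\overline{\mathbb Z}_p$: the involution $r\mapsto -r/(1+r)$ pairs them off with equal multiplicities, and its only possible fixed points $r=0$ and (for $p=2$) $r=-2$ are precisely the zeros of $\omega_1=T$ (resp.\ $T(T+2)$), hence are excluded because $X/\omega_1X\simeq A_1$ is finite. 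The paper instead stays entirely inside the category of $\Lambda$-modules: it builds the explicit map $X^-\times X^-\to X$, $(x,y)\mapsto x-\gamma y$, shows its kernel lies in $X[T]\times X[T]$ (or $X[\omega_1]\times X[\omega_1]$ when $p=2$) and its image contains $TX$ (or $\omega_1X$), and tensors with $\mathbb Q_p$ to obtain $\left(X^-\otimes\mathbb Q_p\right)^2\simeq X\otimes\mathbb Q_p$, whence $\lambda$ is even. Both arguments hinge on exactly the same input from the one-ramified-prime hypothesis, namely the finiteness of $X/TX\simeq A_0$ and $X/\omega_1X\simeq A_1$, i.e.\ coprimality of the characteristic series with $\omega_1$; your version makes the role of the two exceptional points $T=0,-2$ more transparent and explains conceptually why $p=2$ needs $\omega_1$ rather than $T$, while the paper's version avoids any appeal to the behaviour of characteristic ideals under twisting by a ring automorphism and to root-counting over $\mathbb C_p$, and in addition exhibits $X\otimes\mathbb Q_p$ as a doubled vector space. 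One small point: the hypothesis is only that one prime ramifies, not that the extension is totally ramified there; like the paper, you should begin by replacing $K_0$ with $K_e$ for $e$ large enough that $K_\infty/K_e$ is totally ramified, which changes neither $\lambda$ nor the validity of the descent isomorphisms you invoke.
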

\begin{proof} 
If $K_{\infty}/K_0$ is not totally ramified, we can replace $K_{\infty}/K_0$ with $K_{\infty}/K_e$ for some $e>0$ and use the same proof.
Therefore, for simplicity, we assume that $K_{\infty}/K_0$ is totally ramified.

Let $\sigma$ and $\Gamma$ be as in Section \ref{SectPrelim} and assume that $\gamma$ is a topological generator of $\Gamma$.
Then $\sigma$ and $\gamma$ act on $X$, and $\Lambda=\mathbb Z_p[[T]]$ acts via $1+T=\gamma$. 
Let $X^- = \{x\in X\, |\, \sigma x=-x\}$.
Define the homomorphism of $\mathbb Z_p$-modules
$$
f: \, X^-\times X^- \rightarrow X, \qquad (x,y)\mapsto x-\gamma y.
$$
Suppose $(x,y)\in \text{Ker}(f)$. Then $x=\gamma y$. Therefore,
$$
-\gamma y =-x=\sigma x = \sigma\gamma y=\gamma^{-1} \sigma y = -\gamma^{-1}y.
$$
Therefore,
$$
0=(\gamma^2 -1)y = ((1+T)^2-1)y = (2+T)Ty.
$$

Suppose $p$ is odd.
Since $2+T$ is invertible in $\Lambda$, we must have $Ty=0$. Therefore, 
$$
Tx=T\gamma y = \gamma Ty = 0.$$
Therefore, $\text{Ker}(f)\subseteq X[T]\times X[T]$ (where $X[T]$ denotes the kernel of 
multiplication by $T$; we could use $X^-[T]$, but $X^-$ is not necessarily
a $\Lambda$-module and $X$ suffices for our purposes).

If $p=2$, we find that $\text{Ker}(f)\subseteq X[\omega_1(T)]\times X[\omega_1(T)]$.

Let $x_1\in X$. Then
$(1-\sigma)  x_1$ and $(1-\sigma)\gamma x_1$ are in $X^{-}$, and
\begin{align*}
f\left((1-\sigma) x_1, \, (1-\sigma)\gamma x_1\right) &= (1-\sigma) x_1-\gamma (1-\sigma)\gamma x_1\\
&=(1-\gamma^2)x_1.
\end{align*}
If $p$ is odd, we have $(1-\gamma^2)x_1 = T(-2-T)x_1$. Let $Tx\in TX$. Since $-2-T$ is invertible in $\Lambda$, 
we can write $x=(-2-T)x_1$ for some $x_1\in X$, and conclude that $\text{Im}(f)\supseteq TX$.

If $p=2$, we have $\text{Im}(f)\supseteq \omega_1(T)X$.

Since one prime ramifies in $K_{\infty}/K_0$ and it is totally ramified, $X/TX\simeq A_0$, which is finite.
When $p=2$, we use the fact that $X/\omega_1X\simeq A_1$, which is finite.
Therefore, in both cases, $X/\text{Im}(f)$ is finite.

Since $X$ is a finitely generated $\Lambda$-module, the structure theorem implies that the characteristic power
series of $X$ is not a multiple of $T$ or $\omega_1(T)$. Therefore, $X[T]$ and $X[\omega_1(T)]$ are finite, 
so $\text{Ker}(f)$ is finite.

Therefore, there is an exact sequence
$$
0\to \text{finite} \to X^-\times X^-\to X\to \text{finite} \to 0.
$$
Tensoring with $\mathbb Q_p$ yields an isomorphism
$$
\left(X^-\otimes \mathbb Q_p\right)^2 \simeq X\otimes \mathbb Q_p.$$
The structure theorem tells us that the $\mathbb Q_p$-dimension of the right side is $\lambda$, which must therefore
be even. \end{proof}

{\bf Remark.} The above proof is based on a proof of Ren\'e Schoof \cite{Schoof}
that, when $d$ contains no prime factors that are 1 mod 3, there is a group $A$ such that the ideal class group
of $\mathbb Q(\sqrt{-3}, \sqrt[3]{d})$ has the form $A\times A$.

\medskip

In our numerical examples, $k_0$ has class number prime to $p$. In this case, we can make a stronger statement about $\lambda$.
\begin{theorem}\label{LambdaEvenMore} Assume that $p$ does not divide the class number of $k_0$. Then $\lambda$
is divisible by $p-1$ for the $\mathbb Z_p$-extension $K_{\infty}/K_0$. 
\end{theorem}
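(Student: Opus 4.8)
The plan is to exploit the action on $X$ of the group $\Delta=\text{Gal}(K_{\infty}/k_{\infty})$, which is cyclic of order $p$ since $K_{\infty}=K_0k_{\infty}$ with $[K_0:k_0]=p$. Over $\mathbb{Q}_p$ the group algebra splits as $\mathbb{Q}_p[\Delta]\cong \mathbb{Q}_p\times\mathbb{Q}_p(\zeta_p)$, the second factor being a field of degree $p-1$ because $\Phi_p$ is irreducible over $\mathbb{Q}_p$. Accordingly I would decompose $X\otimes_{\mathbb{Z}_p}\mathbb{Q}_p=V_0\oplus V_1$, where $V_0$ is the trivial ($\Delta$-invariant) part and $V_1$, the faithful part, is a $\mathbb{Q}_p(\zeta_p)$-vector space. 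Since $\lambda=\dim_{\mathbb{Q}_p}(X\otimes\mathbb{Q}_p)$ and $\dim_{\mathbb{Q}_p}V_1$ is automatically a multiple of $p-1$, the theorem reduces to showing $V_0=0$, i.e.\ that the trivial $\Delta$-isotypic component of $X$ has $\lambda$-invariant zero. (It may well have positive $\mu$, as the proof of Theorem \ref{Iw} shows, so the point is to separate $\mu$ from $\lambda$.)

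First I would record that the hypothesis $p\nmid h(k_0)$ forces the Iwasawa module $Y=\varprojlim B_n$ of the base extension $k_{\infty}/k_0$ to vanish, where $B_n$ is the $p$-Sylow subgroup of the ideal class group of $k_n$. Indeed $k_{\infty}/k_0$ is unramified outside $p$ and, since $p$ is non-split in $k_0$, it is totally ramified at the unique prime above $p$; the standard isomorphism $Y/TY\cong B_0=0$ then gives $Y=0$ by Nakayama's lemma. In particular $k_{\infty}$ has no nontrivial unramified abelian pro-$p$ extension.

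Next I would identify $V_0$ with $X_\Delta\otimes\mathbb{Q}_p$ (invariants and coinvariants agree on the trivial part over $\mathbb{Q}_p$) and give $X_\Delta$ a field-theoretic meaning. Writing $X=\text{Gal}(L/K_{\infty})$ for the maximal unramified abelian pro-$p$ extension $L$ of $K_{\infty}$, one checks that the maximal subextension $\tilde L$ with $\tilde L/k_{\infty}$ abelian satisfies $\text{Gal}(\tilde L/K_{\infty})=X_\Delta$, and that $\tilde L/K_{\infty}$ is unramified. The crucial point is that every inertia subgroup of $\text{Gal}(\tilde L/k_{\infty})$ meets $X_\Delta$ trivially (as $\tilde L/K_{\infty}$ is unramified), hence injects into $\Delta$ and is finite of order dividing $p$. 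Because $Y=0$, the maximal unramified subextension of $\tilde L/k_{\infty}$ is trivial, so $\text{Gal}(\tilde L/k_{\infty})$ is generated by these finite inertia groups. By Lemma \ref{LemmaSplit} each ramified prime $q_i$ splits completely in $k_{\infty}/k_0$, so the primes above it form a single free orbit under $\Gamma=\text{Gal}(k_{\infty}/k_0)$ and the associated inertia generates a $\Lambda$-module which is a quotient of $\varprojlim \Lambda/(p,\omega_n)=\Lambda/p\Lambda$, of $\lambda$-invariant $0$; the single prime above $p$ contributes only a finite group. Thus $\text{Gal}(\tilde L/k_{\infty})$, being a quotient of a finite direct sum of such modules, has $\lambda=0$, and so does its $\Lambda$-submodule $X_\Delta$. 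Therefore $V_0=X_\Delta\otimes\mathbb{Q}_p=0$ and $\lambda=\dim_{\mathbb{Q}_p}V_1\equiv 0\pmod{p-1}$.

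I expect the main obstacle to be the third paragraph: carefully justifying that $\text{Gal}(\tilde L/k_{\infty})$ is generated by inertia groups that are all finite, and computing the $\Lambda$-module structure of the inertia at the split primes $q_i$ so as to conclude $\lambda=0$ there. The first two paragraphs are essentially formal; the arithmetic content — that the ramification responsible for the large $\mu$ of the trivial part is concentrated in the $\mu$-invariant and leaves $\lambda$ untouched — is precisely what the inverse-limit computation $\varprojlim\Lambda/(p,\omega_n)=\Lambda/p\Lambda$ is meant to capture.
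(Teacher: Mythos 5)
Your argument is correct in substance and reaches the right conclusion, but it takes a genuinely different and much heavier route than the paper. The paper's proof is a short algebraic observation: since $p\nmid h(k_0)$ and $k_{\infty}/k_0$ is totally ramified at the unique prime above $p$, every $k_n$ has class number prime to $p$; hence the norm element $N=1+\tau+\cdots+\tau^{p-1}$ annihilates each $A_n$ (acting on $A_n$ it factors through the trivial $p$-class group of $k_n$), so $X$ is a module over $\mathbb Z_p[\tau]/(N)\simeq\mathbb Z_p[\zeta_p]$ and $X\otimes\mathbb Q_p$ is a $\mathbb Q_p(\zeta_p)$-vector space outright. That kills your component $V_0$ integrally and in one stroke, with no class field theory. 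Your version instead proves $V_0=0$ by showing that $\text{Gal}(\tilde L/k_{\infty})$ is topologically generated by inertia groups of order dividing $p$ whose $\Lambda$-spans are quotients of $\Lambda/p\Lambda$ or finite, hence have $\lambda=0$. This is valid and arguably more illuminating --- it makes visible \emph{why} the trivial isotypic component is pure $\mu$, which connects to the discussion in Section \ref{SectAmbig} --- but it is far more machinery than the statement requires. Two loose ends you should tighten if you keep your route: (1) total ramification of $k_{\infty}/k_0$ at the prime above $p$ does not follow from $p$ being non-split alone; you need $p\nmid h(k_0)$ to rule out an unramified initial layer, so invoke the hypothesis there. (2) Not every prime that ramifies in $K_0/k_0$ need be inert in $k_0/\mathbb Q$, so Lemma \ref{LemmaSplit} does not give complete splitting for all of them; the finitely decomposed ones must be handled like your prime above $p$ (an open decomposition group acts trivially on an inertia group of order $p$, so the $\Lambda$-span is a quotient of $\Lambda/(p,\omega_{n_0})$, which is finite), and this still gives $\lambda=0$.
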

\begin{proof} Since there is only one prime above $p$ in $k_0$, there is exactly one prime ramifying in $k_n/k_0$, 
and this extension is totally ramified at this prime. Since we are assuming that $k_0$ has class number prime to $p$, 
the class number of $k_n$ is also prime to $p$.

 Let $\tau$ generate $\text{Gal}(K_{\infty}/k_{\infty})$. Then $\tau$ acts on $X$ and $1+\tau+\tau^2+\cdots +\tau^{p-1}$
annihilates $X$ since the class number of each $k_n$ is prime to $p$.
Therefore, $X$ is a module over $\mathbb Z_p[\tau]/(1+\tau+\cdots +\tau^{p-1})\simeq \mathbb Z_p[\zeta_p]$,
where $\zeta_p$ is a primitive $p$th root of unity. Tensoring with $\mathbb Q_p$ shows that
$X\otimes_{\mathbb Z_p} \mathbb Q_p$ is a vector space over $\mathbb Q_p(\zeta_p)$. But
$$
\lambda = \dim_{\mathbb Q_p} \left(X\otimes \mathbb Q_p\right) = (p-1) \dim_{\mathbb Q_p(\zeta_p)}
\left(X\otimes \mathbb Q_p\right).
$$
This proves the theorem.\end{proof}

The following will allow us to show $\lambda=0$ in some  cases. Recall that an elementary $\Lambda$-module
is a module of the form
$$
E=\bigoplus_i\Lambda/(p^{\mu_i})\oplus \bigoplus_j \Lambda/(f_j),
$$
where each $\mu_i$ is a positive integer and each $f_j$ is a distinguished polynomial. (An elementary module can also include a summand
$\Lambda^r$, but we will not need this case.) For such a module,
$\mu=\sum \mu_i$ and $\lambda=\sum \deg f_i$.

\begin{theorem}\label{LambdaThm} Let $E$ be an elementary $\Lambda$-module with $\mu=0$ and $\lambda \ge 2$. Then
$$
\#(E/\nu_1 E)\ge p^m,
$$
where
$$
m = \text{min}(\lambda, \, p-1).
$$
When $p=2$,
$$
\#(E/\nu_2 E)\ge \begin{cases} 16 \text{ if } \lambda = 2\\ 8\text{ if } \lambda \ge 3.
\end{cases}
$$
\end{theorem}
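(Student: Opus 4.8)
The plan is to split $E/\nu_nE$ into a product over the cyclic summands of $E$ and to compute each factor as a local resultant valuation. Since $\mu=0$, we have $E=\bigoplus_j\Lambda/(f_j)$ with each $f_j$ distinguished, so $E/\nu_nE=\bigoplus_j\Lambda/(f_j,\nu_n)$ and $\#(E/\nu_nE)=\prod_j\#(\Lambda/(f_j,\nu_n))$. For a single distinguished $f$, Weierstrass division identifies $\Lambda/(f)$ with the free $\mathbb Z_p$-module $\mathbb Z_p[T]/(f)$ of rank $\deg f$, on which multiplication by $\nu_n$ is $\mathbb Z_p$-linear; hence, assuming $f$ and $\nu_n$ are coprime (otherwise the quotient is infinite and there is nothing to prove),
$$\#(\Lambda/(f,\nu_n))=p^{\,v_p(\mathrm{Res}(f,\nu_n))}=p^{\,\sum_{\rho}v_p(f(\rho))},$$
where $\rho$ runs over the roots of $\nu_n$ in $\overline{\mathbb Q}_p$ (with multiplicity) and $v_p$ is normalized by $v_p(p)=1$. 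The whole problem is thereby reduced to bounding $\sum_\rho v_p(f(\rho))$ from below.

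The elementary estimate I would use is that if $f=T^d+a_{d-1}T^{d-1}+\cdots+a_0$ is distinguished (so $p\mid a_i$ for $i<d$) and $v_p(\rho)=r\ge0$, then comparing the leading term $\rho^d$, of valuation $dr$, with every other term, each of valuation at least $1$, gives $v_p(f(\rho))\ge\min(dr,1)$. The roots of $\nu_1=\Phi_p(1+T)$ are the $p-1$ elements $\zeta-1$ with $v_p(\zeta-1)=1/(p-1)$, so a factor of degree $d$ contributes at least $(p-1)\min\!\big(d/(p-1),\,1\big)=\min(d,p-1)$. Summing over $j$ and using the inequality $\sum_j\min(d_j,p-1)\ge\min\big(\sum_jd_j,\,p-1\big)=m$ (immediate on separating the cases $\sum_jd_j\le p-1$ and $\sum_jd_j>p-1$) yields the first assertion.

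For $p=2$ I would use the factorization $\nu_2=(T+2)(T^2+2T+2)$, whose roots are $-2$ (with $v_2=1$) and $-1\pm i$ (with $v_2=\tfrac12$); thus a factor $f$ of degree $d$ contributes $v_2(f(-2))+2\,v_2(f(-1+i))$. When $\lambda\ge3$, crude term-by-term bounds suffice: $v_2(f(-2))\ge1$ for every $d\ge1$, and $v_2(f(-1+i))\ge1$ once $d\ge2$, so a degree-one factor contributes at least $2$ and a factor of degree $\ge2$ at least $3$. Since $\lambda\ge3$ forces either two or more factors (total $\ge4$) or a single factor of degree $\ge2$ (contribution $\ge3$), the total is $\ge3$ and $\#(E/\nu_2E)\ge8$.

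The remaining case, $\lambda=2$, is where $\#(E/\nu_2E)\ge16$ is demanded, i.e.\ total valuation $\ge4$, and it is the main obstacle. If the two degrees split as $1+1$ each degree-one factor already contributes $\ge2$, so the total is $\ge4$. The delicate configuration is a single distinguished factor $f=T^2+a_1T+a_0$, for which the crude bound gives only $3$. Here I would compute $f(-2)=a_0-2a_1+4$ and $f(-1+i)=(a_0-a_1)+(a_1-2)i$ explicitly and argue by cases on $v_2(a_0)$: when $v_2(a_0)=1$ one finds $v_2(f(-2))=1$ exactly, while the ramified factorization $a_0-2i=2(u-i)$ with $u$ odd (so $v_2(u-i)=\tfrac12$) forces $v_2(f(-1+i))\ge\tfrac32$, giving total $\ge1+3=4$; when $v_2(a_0)\ge2$ the roles reverse, with $v_2(f(-2))\ge2$ and $v_2(f(-1+i))=1$, again totalling $\ge4$. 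The crux is exactly this compensation between the $T+2$ (i.e.\ $\nu_1$) part and the $\Phi_4(1+T)=T^2+2T+2$ part of $\nu_2$, which prevents the total from dropping below $4$.
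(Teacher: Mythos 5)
Your proof is correct, and it takes a genuinely different route from the paper's. The paper works directly with ideals in $\Lambda$: it uses the division algorithm for distinguished polynomials to show $(\nu_1,f)\subseteq(p,f)$ when $\deg f\le p-1$ and $(\nu_1,f)\subseteq(\nu_1,p)$ when $\deg f\ge p$, and for the delicate case $p=2$, $\lambda=2$ it reduces $\nu_2$ mod $f=T^2+aT+b$ to a linear remainder $xT+y$ and computes the index as the $2$-adic valuation of the determinant $bx^2-axy+y^2$, finishing by checking that the resulting expression $-4a^3+a^2(6b+16)-a(4b^2+12b+24)+b(b+2)^2$ is divisible by $16$. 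You instead compute $\#\bigl(\Lambda/(f,\nu_n)\bigr)$ as $p^{\sum_\rho v_p(f(\rho))}$ over the roots of $\nu_n$, which is the same resultant the paper's determinant computes, but organized through the factorization $\nu_2=(T+2)(T^2+2T+2)$ and the valuations $v_2(-2)=1$, $v_2(-1\pm i)=\tfrac12$. Your method has the advantage of uniformity (one mechanism handles $\nu_1$ for all $p$, and both $\nu_2$ cases for $p=2$) and it makes visible \emph{why} the bound is $16$ rather than $8$ when $\lambda=2$: the two factors of $\nu_2$ compensate, with $v_2(f(-2))$ and $v_2(f(-1+i))$ trading off according to $v_2(a_0)$, exactly the phenomenon hidden inside the paper's divisibility-by-$16$ check. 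The paper's argument stays entirely inside $\Lambda$ and needs no extension of $\mathbb Q_p$, which is its own virtue. One small presentational point: your justification of $v_2(f(-1+i))\ge\tfrac32$ when $v_2(a_0)=1$ via the splitting $f(-1+i)=(a_0-2i)+a_1(i-1)$ is correct but terse; it deserves the explicit observation that both terms have valuation at least $\tfrac32$ (the first exactly, since $v_2(a_0-2i)=1+v_2(u-i)=\tfrac32$ with $a_0=2u$, $u$ odd, and the second because $v_2(a_1)\ge1$ and $v_2(i-1)=\tfrac12$).
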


\begin{proof} 
Suppose that $E$ contains a summand $E_1=\Lambda/(f(T))$, where $f(T)$ is distinguished of degree $\lambda$,
with $1\le \lambda \le p-1$.
Write $f(T)=T^{\lambda}+a_{\lambda-1}T^{\lambda-1}+\cdots+a_0$, with $p\mid a_i$ for all $i$.
Then
\begin{align*}
(\nu_1(T), \, f(T)) &= (\nu_1(T)-T^{p-1-\lambda}f(T), \, f(T))\\
&\subseteq (p,   \, f(T)),
\end{align*}
which has index $p^{\lambda}$ in $\Lambda$. Therefore, 
$$\#(E_1/\nu_1E_1)\ge p^{\lambda}.$$

Now suppose $\deg f(T)=\lambda \ge p$. The division algorithm for distinguished polynomials lets us write
\begin{equation}\label{divalg}
f(T)=\nu_1(T) q(T) + r(T)
\end{equation}
with $\deg r(T)\le p-2$. Reduce (\ref{divalg}) mod $p$. The fact that $f(T) \mod p$ 
and $\nu_1(T) \mod p$ are monomials and the uniqueness in the division algorithm mod $p$ imply that
all the coefficients of $r(T)$ are multiples of $p$. Therefore,
\begin{align*}
(\nu_1(T),\, f(T)) &= (\nu_1(T),\, r(T))\\
&\subseteq (\nu_1(T), \, p),
\end{align*}
which has index $p^{p-1}$ in $\Lambda$. 

Finally, suppose that $E$ contains a direct sum
$$
\bigoplus_i \Lambda/(f_i(T))
$$
with $\deg f_i(T) = \lambda_i $ and $\sum_i \, \lambda_i = \lambda$.
Since
$$
\sum_i \text{min}(\lambda_i, p-1) \ge \text{min}(\lambda, p-1),
$$
and the $i$th summand contributes at least $\text{min}(\lambda_i, p-1)$ to $E$, the theorem follows, except
for the extra claim for $p=2$.

When $p=2$, we have $\nu_2(T)=T^3+4T^2+6T+4$. Suppose $\lambda = 2$. A summand $E_1=\Lambda/(T-a)$
contributes at least $2^{2}$ to the order of $E/\nu_2E$. Therefore, if there are two or more such
summands in $E$, we have $(E/\nu_2E)\ge 2^4$. 

The analysis of a summand $\lambda/(f(T))$ with $f(T)=T^2+aT+b$ is more technical. If we divide 
$f(T)$ into $\nu_2(T)$, we obtain a remainder of the form $xT+y$, so
$$
(\nu_2(T), \, f(T)) = (f(T),\,  xT+y).
$$
\begin{lemma} Let $f(T)=T^2+aT+b$ with $2\mid a$ and $2\mid b$.
The index of the ideal $(f(T),\,  xT+y)$ in $\Lambda$ is $2^v$, where $v$ is the 2-adic valuation of
$bx^2-axy+y^2$.
\end{lemma}
\begin{proof} $\Lambda/(f(T))$ is a free $\mathbb Z_2$-module with basis $\{1, T\}$. Multiplication by
$xT+y$ mod $f(T)$ maps this module to itself. Its matrix with respect to this basis is
$$
\begin{pmatrix} y & -bx \\ x & y-ax\end{pmatrix}.
$$
The power of 2 in the determinant of this matrix gives the order of the cokernel of the map, which is the desired index.
\end{proof}
In our case, the determinant is
$$
-4a^3+a^2(6b+16)-a(4b^2+12b+24) + b(b+2)^2.
$$
Each term, except possibly $b(b+2)^2$, is clearly divisible by 16.
Consideration of the cases $b\equiv 2\pmod 4$ and $b\equiv 0 \pmod 4$ shows that $b(b+2)^2$ is also
a multiple of 16, so the determinant is a multiple of $16$. The lemma yields the theorem in this case.

It remains to treat the case where $p=2$ and  $\deg f(T) = \lambda\ge 3$. 
We have
\begin{align*}
(\nu_2(T),\, f(T)) & \subseteq (\nu_2(T),\, 2,\, f(T))\\
&= (\nu_2(T),\, 2, \, f(T)-T^{\lambda-3} \nu_2(T))\\
&= (\nu_2(T),\, 2),
\end{align*}
which has index $2^{3}$ in $\Lambda$.

This completes the proof for $p=2$ when $E$ is of the form $\Lambda/(f(T))$. When $E$ contains a sum of
such modules, the contributions from each summand are added together, so
the result follows immediately from the cases considered. This completes the proof of the theorem. \end{proof}

 {\bf Remarks.} The results in the theorem are sharp, as the following examples show:
\begin{enumerate}
\item Let $N\ge 0$ and let $E=\Lambda/(f(T))$, where $f(T)=T^N \nu_1(T) + p$. Then 
$\lambda = p-1+N$ and
$$
 \#(E/\nu_1E) = p^{p-1}.
$$
\item Let $p=2$, let $f(T) = \nu_2(T) + 2$, and let $E=\Lambda/(f(T))$. Then 
$$
(\nu_2(T), \, f(T)) = (2, \, f(T)), $$
which has index 8 in $\Lambda$. 
\end{enumerate}

\section{Pseudo-isomorphisms}\label{Pseudo}

In this section, we show how knowledge of $A_0$ and $A_1$ can be used to evaluate the Iwasawa invariants
in some cases. Our method was inspired by the work of Gold \cite{Gold}, who used such information
to evaluate $\lambda$ for cyclotomic $\mathbb Z_p$-extensions.

We assume that the primes above $p$ are totally ramified in $K_{\infty}/K_0$.
By \cite[Lemma 13.15]{W}, there is a submodule $Y_0$ of $X$ such that
$$
X/\nu_n Y_0 \simeq A_n
$$
for each $n\ge 0$.

The structure theorem for $\Lambda$-modules says that there is an elementary $\Lambda$-module
$$
E=\bigoplus_i \Lambda/(f_i(T)),
$$
where each $f_i$ is either a power of $p$ or a distinguished polynomial, and there is an exact sequence
$$
\begin{CD}
0@>>> F_1  @>>> Y_0 @>>> E @>>> F_2 @>>>0
\end{CD}
$$
of $\Lambda$-modules with $F_1$ and $F_2$ finite.

\begin{theorem}\label{PropOrd}
Suppose $E$ is an elementary $\Lambda$-module and there is an exact sequence
$$
\begin{CD}
0@>>> F_1  @>>> Z @>>> E @>>> F_2 @>>>0
\end{CD}
$$
of $\Lambda$-modules, where $F_1$ and $F_2$ are finite. Let $f\in \Lambda$ be such that $Z/fZ$ is finite. Then
$$
\#(Z/fZ) = \#(F_1/fF_1)\, \#(E/fE).
$$
\end{theorem}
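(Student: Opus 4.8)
The plan is to reduce the given four-term exact sequence to two short exact sequences and apply the snake lemma for multiplication by $f$ to each, bookkeeping the orders of the resulting kernels and cokernels; this is really a multiplicative Euler-characteristic computation. First I would set $G=\operatorname{im}(Z\to E)=\ker(E\to F_2)$, which splits the hypothesis into
$$0\to F_1\to Z\to G\to 0\qquad\text{and}\qquad 0\to G\to E\to F_2\to 0.$$
For a $\Lambda$-module $M$, write $M[f]$ for the kernel of multiplication by $f$ on $M$. Applying the snake lemma to multiplication by $f$ on a short exact sequence $0\to A\to B\to C\to 0$ yields the six-term exact sequence
$$0\to A[f]\to B[f]\to C[f]\to A/fA\to B/fB\to C/fC\to 0,$$
and whenever all six terms are finite the alternating product of their orders equals $1$. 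I will also use the elementary fact that for a finite module $M$ one has $\#(M[f])=\#(M/fM)$.

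The key preliminary step is to show that multiplication by $f$ is injective on $E$, i.e. that $E[f]=0$. Since $Z/fZ$ is finite, the tail of the snake sequence for $0\to F_1\to Z\to G\to 0$ exhibits $G/fG$ as a quotient of $Z/fZ$, hence finite; then the tail of the snake sequence for $0\to G\to E\to F_2\to 0$, together with the finiteness of $F_2$, shows $E/fE$ is finite. Because $E=\bigoplus_i\Lambda/(f_i)$ is elementary, finiteness of $E/fE$ forces $f$ to be coprime to each $f_i$ (otherwise $\Lambda/(f,f_i)$ would surject onto the infinite ring $\Lambda/(d)$ for a common non-unit factor $d$), and this coprimality makes multiplication by $f$ injective on each summand $\Lambda/(f_i)$, so $E[f]=0$. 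This is the step I expect to be the main obstacle, since it is the only place where the elementary structure of $E$ and the hypothesis that $Z/fZ$ is finite are genuinely used; the remainder is formal.

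With $E[f]=0$ in hand, the snake sequence for $0\to G\to E\to F_2\to 0$ gives $G[f]\hookrightarrow E[f]=0$, so $G[f]=0$, and collapses to
$$0\to F_2[f]\to G/fG\to E/fE\to F_2/fF_2\to 0.$$
Counting orders and cancelling $\#(F_2[f])=\#(F_2/fF_2)$ yields $\#(G/fG)=\#(E/fE)$. Finally, feeding $G[f]=0$ into the snake sequence for $0\to F_1\to Z\to G\to 0$ reduces it to the short exact sequence
$$0\to F_1/fF_1\to Z/fZ\to G/fG\to 0,$$
whence $\#(Z/fZ)=\#(F_1/fF_1)\cdot\#(G/fG)=\#(F_1/fF_1)\cdot\#(E/fE)$, as claimed. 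The asymmetry in the statement—$F_1$ contributing but $F_2$ not—reflects exactly this computation: the kernel and cokernel of multiplication by $f$ on the quotient term $F_2$ cancel against each other, while the finite submodule $F_1$ survives as a genuine cokernel contribution.
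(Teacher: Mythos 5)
Your proof is correct and follows essentially the same route as the paper's: split the four-term sequence at the image $G=\operatorname{im}(Z\to E)$, apply the snake lemma for multiplication by $f$ to each short exact sequence, and cancel $\#F_2[f]=\#(F_2/fF_2)$. The only difference is organizational — you establish $E[f]=0$ up front from the finiteness of $Z/fZ$ and the elementary structure of $E$, whereas the paper runs a case split on whether $f$ acts injectively on $E$ and only afterwards rules out the non-injective case — but the substance is identical.
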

\begin{proof} Let $Y=Z/F_1$.
There is a commutative diagram
$$
\begin{CD}
0@>>> Y @>>> E @>>> F_2 @>>> 0\\
 @.@VVV @VVV@VVV @.\\
0@>>> Y @>>> E @>>> F_2 @>>> 0\\
\end{CD}
$$
where the vertical maps are multiplication by $f$.

Suppose that multiplication by $f$ is injective on $E$. The Snake Lemma says there is an exact sequence
$$
\begin{CD}
0@>>> F_2[f] @>>> Y/f Y @>>> E/f E @>>> F_2/f F_2 @>>>0,
\end{CD}
$$
where $F_2[f]$ is the kernel of multiplication by $f$.
The exact sequence 
$$
\begin{CD}
0@>>> F_2[f]@>>> F_2@>>>F_2@>>>F_2/fF_2 @>>>0
\end{CD}
$$
shows that $\#F_2[f] = \#(F_2/fF_2)$. Therefore,
$$
\#(Y/f Y) = \#(E/f E).
$$

If multiplication by $f$ is not injective on $E$, then $E/f E$ is infinite.
The exact sequence
$$
\begin{CD}
 F_2[f] @>>> Y/f Y @>>> E/f E @>>> F_2/f F_2 @>>>0
\end{CD}
$$
shows that $Y/f Y$ is also infinite. 

Therefore, $\#(Y/fY) = \#(E/fE)$ in both cases.

Consider the diagram
$$
\begin{CD}
0@>>> F_1 @>>> Z @>>> Y @>>> 0\\
 @.@VVV @VVV@VVV @.\\
0@>>> F_1 @>>> Z @>>> Y @>>> 0\\
\end{CD}
$$
where the vertical maps are multiplication by $f$.
Because $Z/fZ$ surjects onto $Y/fY$, the module $Y/fY$
must be finite. Therefore, $E/fE$ is finite, which implies that
$f$ and the characteristic power series of $E$ are relatively prime. 
Because $Y$ injects into $E$, multiplication by $f$ is injective 
on $Y$. The Snake Lemma yields the exact sequence
$$
\begin{CD}
0@>>> F_1/fF_1  @>>> Z/fZ @>>> Y/fY @>>>0.
\end{CD}
$$
Therefore,
$$
\#(Z/fZ) = \#(F_1/fF_1) \#(Y/fY)= \#(F_1/fF_1) \#(E/fE).
$$
This proves the theorem. \end{proof}

{\bf Remark.} The example $Z=(p, T)/(p^2)$, $E=\Lambda/(p^2)$, $F_1=0$, $f=T$ yields
$Z/TZ\simeq p\times p$ and $E/TE\simeq p^2$, so the groups must have the same order but are not necessarily isomorphic.

We apply Theorem \ref{PropOrd} where $Z=Y_0$ and $E$ is an elementary $\Lambda$-module.
Since $Y_0/\nu_nY_0$ is a submodule of $X/\nu_nY_0$, which is isomorphic to $A_n$, it follows that
$Y_0/\nu_nY_0$ is finite. 

Suppose that $\mu>0$, so $E$ contains a summand of the form $\Lambda/(p^j)$.
Such a term contributes $p^{j(p^n-1)}$ to the order of $E/\nu_nE$, and $\mu$ is the sum of the values of $j$
for the submodules of this form, so we find that
$$
e_n-e_0\ge (p^n-1)\mu.
$$

We have proved the following.
\begin{proposition}\label{usefulprop}
$$
\#A_n = \#(X/Y_0) \#(Y_0/\nu_n Y_0) = \#A_0 \#(F_1/\nu_n F_1) \#(E/\nu_n E).
$$
Therefore,
$$
 p^{e_n-e_0}=\#(F_1/\nu_n F_1) \#(E/\nu_n E)\ge p^{\mu(p^n-1)}.
$$
In particular, 
$$
\mu\le \frac{e_n-e_0}{p^n-1}.
$$
\end{proposition}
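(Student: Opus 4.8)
The plan is to assemble facts already in place. First I would record the base case: since $\nu_0 = \omega_0/T = ((1+T)-1)/T = 1$, the isomorphism $X/\nu_nY_0\simeq A_n$ specialized to $n=0$ gives $X/Y_0\simeq A_0$, so $\#(X/Y_0)=\#A_0=p^{e_0}$, and this quotient is finite. Since $A_n$ is finite, so is $X/\nu_nY_0$, and hence so is $Y_0/\nu_nY_0$, being a submodule of it.

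Next I would prove the first displayed equality from the inclusions $\nu_nY_0\subseteq Y_0\subseteq X$. These give a short exact sequence
$$0\longrightarrow Y_0/\nu_nY_0\longrightarrow X/\nu_nY_0\longrightarrow X/Y_0\longrightarrow 0$$
of finite groups, whence $\#A_n=\#(X/\nu_nY_0)=\#(X/Y_0)\,\#(Y_0/\nu_nY_0)$. For the second equality I would apply Theorem \ref{PropOrd} with $Z=Y_0$ and $f=\nu_n$, which is legitimate precisely because $Y_0/\nu_nY_0$ is finite; this yields $\#(Y_0/\nu_nY_0)=\#(F_1/\nu_nF_1)\,\#(E/\nu_nE)$. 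Combining with $\#(X/Y_0)=\#A_0$ gives the full chain of equalities in the statement.

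The inequality then follows from the structure of $E$. Writing $\#A_n=p^{e_n}$ and $\#A_0=p^{e_0}$ and cancelling, I get $p^{e_n-e_0}=\#(F_1/\nu_nF_1)\,\#(E/\nu_nE)$. The factor $\#(F_1/\nu_nF_1)$ is at least $1$. For $\#(E/\nu_nE)$ I would isolate the summands $\Lambda/(p^{\mu_i})$ of $E$: since $\nu_n\equiv T^{p^n-1}\pmod p$ is monic of degree $p^n-1$, the quotient $\Lambda/(p^{\mu_i},\nu_n)$ is free of rank $p^n-1$ over $\mathbb Z/p^{\mu_i}$, of order $p^{\mu_i(p^n-1)}$; summing the exponents $\mu_i$ to $\mu$ and noting that the distinguished-polynomial summands contribute orders $\ge 1$, I obtain $\#(E/\nu_nE)\ge p^{\mu(p^n-1)}$, exactly as in the discussion preceding the statement. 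Hence $p^{e_n-e_0}\ge p^{\mu(p^n-1)}$, so $e_n-e_0\ge\mu(p^n-1)$, and dividing by $p^n-1>0$ gives $\mu\le (e_n-e_0)/(p^n-1)$.

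The argument is essentially bookkeeping over facts proved earlier, so there is no single hard step; the part most prone to error is verifying the finiteness hypotheses that license both the multiplicativity of orders in the tower $\nu_nY_0\subseteq Y_0\subseteq X$ and the application of Theorem \ref{PropOrd}, together with the computation that each $p$-power summand of $E$ contributes exactly $p^{\mu_i(p^n-1)}$ to $\#(E/\nu_nE)$.
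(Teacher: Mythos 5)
Your proposal is correct and follows essentially the same route as the paper: identify $X/Y_0\simeq A_0$ via $\nu_0=1$, use the exact sequence $0\to Y_0/\nu_nY_0\to X/\nu_nY_0\to X/Y_0\to 0$ together with Theorem \ref{PropOrd} applied to $Z=Y_0$, $f=\nu_n$, and then bound $\#(E/\nu_nE)$ below by the contribution $p^{\mu_i(p^n-1)}$ of each summand $\Lambda/(p^{\mu_i})$. The only addition you make is to spell out the finiteness checks and the computation $\#\bigl(\Lambda/(p^{\mu_i},\nu_n)\bigr)=p^{\mu_i(p^n-1)}$, which the paper states without proof; these are verified correctly.
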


\section{Totally Ramified}\label{SectTotRam}

In order to apply our results, we need that $K_{\infty}/K_0$ is totally ramified
at the primes above $p$. The following two results treat the situations that we need.

For $p=3$, we use the following, which is proved for example in \cite{Chang}.

\begin{proposition} Let $d=hk^2$ with squarefree integers $h, k$ such that $\gcd(h,k)=1$, $h\ne 1$, and $3\nmid k$. Then the discriminant
of $\mathbb Q(\zeta_3,\sqrt[3]{d})/\mathbb Q$ is
$$
-3^7(d/k)^4 \text{ if } d\not\equiv \pm 1\mod 9, \quad -3^3(d/k)^4 \text{ if } d\equiv \pm 1\pmod 9
$$
\end{proposition}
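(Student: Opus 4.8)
The plan is to compute the discriminant of $\mathbb Q(\zeta_3,\sqrt[3]{d})/\mathbb Q$ by working locally prime by prime, using the conductor-discriminant philosophy together with the standard ramification theory of Kummer extensions of degree 3. Since $\mathbb Q(\zeta_3,\sqrt[3]{d})$ is the compositum of $\mathbb Q(\zeta_3)$ (discriminant $-3$) and the pure cubic field $\mathbb Q(\sqrt[3]{d})$, I would first reduce to understanding the relative discriminant $\mathfrak d(L/\mathbb Q(\zeta_3))$ and then use the tower formula
\begin{equation*}
\mathrm{disc}(L/\mathbb Q) = \mathrm{disc}(\mathbb Q(\zeta_3)/\mathbb Q)^{3}\cdot N_{\mathbb Q(\zeta_3)/\mathbb Q}\bigl(\mathfrak d(L/\mathbb Q(\zeta_3))\bigr),
\end{equation*}
where $L=\mathbb Q(\zeta_3,\sqrt[3]{d})$ and the exponent $3=[L:\mathbb Q(\zeta_3)]$. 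Because $\mathbb Q(\zeta_3)$ contains the cube roots of unity, $L/\mathbb Q(\zeta_3)$ is a genuine Kummer extension, so its local behavior is governed entirely by the cubic residue symbol and by congruences of $d$ modulo powers of the prime above $3$.

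The key steps, in order: First I would dispose of the primes $\ell\nmid 3d$, which are unramified and contribute nothing. Second, for a prime $\ell\mid h$ with $\ell\neq 3$, the extension $L/\mathbb Q(\zeta_3)$ is tamely and totally ramified of degree $3$ at the primes above $\ell$, contributing $\ell^{2}$ to the relative discriminant exponent per such prime; the hypothesis $d=hk^2$ with $\gcd(h,k)=1$ means $\ell$ divides $d$ to the first power in $h$ and to an even power in $k^2$, and writing $d/k=h k$ explains why the clean quantity in the answer is $(d/k)^{4}$ rather than $d^{4}$ — the $k$-part is a unit times a cube at each prime dividing $k$, so it ramifies in exactly the same way as the $h$-part. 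Tracking the norm down to $\mathbb Q$ produces the power of $4$ in the exponent of $(d/k)$. Third, and this is the crux, I would analyze the prime $3$ itself: here ramification is wild, and the exponent of $3$ in the discriminant depends on whether $d$ is a cube modulo $9$, i.e. on the dichotomy $d\equiv\pm1\pmod 9$ versus $d\not\equiv\pm1\pmod 9$. The congruence $d\equiv\pm1\pmod 9$ corresponds precisely to the case where the Kummer generator $\sqrt[3]{d}$ can be adjusted by a cube so that the extension at $3$ becomes less ramified (the generator is congruent to a cube modulo the relevant power of $\lambda=1-\zeta_3$), yielding the smaller exponent $3^3$ instead of $3^7$.

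I expect the wild ramification at $3$ to be the main obstacle. The clean way through it is to use the standard criterion for Kummer extensions $\mathbb Q(\zeta_3)(\sqrt[3]{\alpha})$: the prime $\lambda$ above $3$ is unramified (after absorbing cubes) exactly when $\alpha$ is congruent to a nonzero cube modulo $\lambda^{3}$, is tamely impossible here, and otherwise has a conductor exponent that one reads off from the largest $m$ with $\alpha\equiv(\text{cube})\pmod{\lambda^{m}}$. One then checks that $d\equiv\pm1\pmod 9$ is equivalent to $d$ being a cube modulo $\lambda^{4}$ (since $9=(\lambda^{2})^{2}$ up to units and $N\lambda=3$), which lowers the local contribution. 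I would carry out this local computation at $\lambda$ carefully — it is the one genuinely delicate point — and then assemble the global discriminant by the tower formula above, matching the two stated values $-3^7(d/k)^4$ and $-3^3(d/k)^4$. The overall sign $-$ is forced by the signature of $L$: it is totally complex of degree $6$, so $\mathrm{disc}(L/\mathbb Q)$ has sign $(-1)^{r_2}=(-1)^{3}=-1$. Since the statement is quoted as already proved in \cite{Chang}, I would present this as the sketch of the argument rather than grind through every local congruence.
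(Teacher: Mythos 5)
Your outline is correct, but it takes a genuinely different route from the paper. The paper climbs the other side of the diamond: it quotes Dedekind's classical formula $\mathrm{disc}\,\mathbb Q(\sqrt[3]{d})=-27(hk)^2$ or $-3(hk)^2$ according as $d\not\equiv\pm1$ or $d\equiv\pm1\pmod 9$, observes that the remaining quadratic step $\mathbb Q(\zeta_3,\sqrt[3]{d})/\mathbb Q(\sqrt[3]{d})$ is tamely ramified at a single prime above $3$ and unramified elsewhere, and finishes with the tower formula $\mathrm{disc}(L)=\mathrm{disc}(\mathbb Q(\sqrt[3]{d}))^2\cdot N(\mathfrak d)$. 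That makes the proof two lines and hides all of the wild ramification at $3$ inside the citation of \cite{Dedekind}. Your route through $\mathbb Q(\zeta_3)$ forces you to do that wild local analysis at $1-\zeta_3$ yourself, which you correctly identify as the crux; the payoff is that the argument is self-contained, and it is in fact the direction the paper immediately turns around and uses (to read off $v_3(N\mathcal D)$ for $L/\mathbb Q(\zeta_3)$, which it deduces \emph{from} the proposition rather than proving the proposition that way). Your local dichotomy is right: for rational $d$ prime to $3$ one checks that cubes of units modulo $(1-\zeta_3)^3$ reduce to $\pm1$ and that $(1-\zeta_3)^3\cap\mathbb Z=9\mathbb Z$, so ``$d$ is a cube near $3$'' is exactly $d\equiv\pm1\pmod 9$, and the conductor exponents $0$, $2$, $4$ of the cubic characters reproduce $v_3(N\mathcal D)=0,4,8$. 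One phrase to repair: at a prime $\ell\mid k$ the reason for total ramification is that $v_\ell(d)=2\not\equiv 0\pmod 3$, not that the $k$-part is ``a unit times a cube'' there (if it were, $\ell$ would be unramified); the contribution is $\ell^4$ either way, and since $hk=d/k$ is squarefree with the same prime support as $d$, this assembles to $(d/k)^4$. Your signature argument for the sign ($r_2=3$) is fine and is implicit in the paper as well.
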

\begin{proof} It was proved by Dedekind (\cite[pp. 53, 54]{Dedekind}; see also \cite[pp. 38, 49-51]{Marcus})
that the discriminant of $\mathbb Q(\sqrt[3]{d})$ is $-27(hk)^2$ if $d\not\equiv \pm 1\pmod 9$, and $-3(hk)^2$
if $d\equiv \pm 1\pmod 9$. Since the extension $\mathbb Q(\sqrt[3]{d}, \zeta_3)/\mathbb Q(\sqrt[3]{d})$ is tamely ramified at
the prime above 3 and is unramified at all other primes, the relative discriminant is the prime of $\mathbb Q(\sqrt[3]{d})$ above 3 to 
raised to the first power. The formula for discriminants in towers yields the desired result. \end{proof}

It follows that the norm ${N\mathcal D}$ of the relative discriminant
of $\mathbb Q(\zeta_3,\sqrt[3]{d})/\mathbb Q(\zeta_3)$ satisfies
$$
v_3(N{\mathcal D}) = \begin{cases} 8 \text{ if } 3\mid d\\  4\text{ if } 3\nmid d, d\not\equiv \pm 1\pmod 9\\
0 \text{ if } d\equiv \pm 1\pmod 9,\end{cases}
$$
where $v_3$ is the 3-adic valuation, normalized by $v_3(3)=1$.
The conductor-discriminant formula says that ${\mathcal D}$ is the product of the conductors of the characters
for $\text{Gal}(\mathbb Q(\zeta_3,\sqrt[3]{d})/\mathbb Q(\zeta_3))$. 

Now consider the extension $K_1/k_0=\mathbb Q(\zeta_3, \sqrt[3]{3}, \sqrt[3]{d})/\mathbb Q(\zeta_3)$. The conductor-discriminant formula
tells us that the relative discriminant is the product of the conductors of the characters of the Galois group, and this is the product of
the discriminants of the four cubic subextensions $L_i/\mathbb Q(\zeta_3)$. The formula for discriminants in towers implies that
the discriminant of $\mathbb Q(\zeta_3, \sqrt[3]{3},\sqrt[3]{d})/\mathbb Q$ is, up to sign,
$3^9$ times the product of the norms of the relative discriminants of these four cubic extensions $L_i/\mathbb Q(\zeta_3)$.

Putting everything together, 
we find that the 3-adic valuation of the discriminant $D$ of  $\mathbb Q(\zeta_3, \sqrt[3]{3},\sqrt[3]{d})/\mathbb Q$ is
$$
v_3(D) = \begin{cases} 37 \text{ if } 3\nmid d, d\not\equiv \pm 1\pmod 9\\
33 \text{ if } d\equiv \pm 1\pmod 9\\
37 \text{ if } 3\mid d, d/3\not\equiv \pm 1\pmod 9\\
33 \text{ if } 3\mid d,  d/3\equiv \pm 1\pmod 9.
\end{cases}
$$

The 3-adic valuation of the discriminant of $\mathbb Q(\zeta_3, \sqrt[3]{d})/\mathbb Q$ is either 3, 7, or 11,
with the 11 occurring when $3\mid d$. This is less than 1/3 of the 3-adic valuation of the discriminant of
$\mathbb Q(\zeta_3,\sqrt[3]{3}, \sqrt[3]{d})/\mathbb Q$ except when $d=3d_1$ with $d_1\equiv \pm 1\pmod 9$.
We have proved the following:
\begin{proposition}
Let $d\ne 1, 3$ be a cubefree integer with $9\nmid d$. The extension 
$$
\mathbb Q(\zeta_3, \sqrt[3]{3}, \sqrt[3]{d})/\mathbb Q(\zeta_3, \sqrt[3]{d})
$$
is totally ramified at the primes above 3 except when $d=3d_1$ with $d_1\equiv \pm 1\pmod 9$, in which case it is
unramified.
\end{proposition}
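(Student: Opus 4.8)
The plan is to read off the ramification of the degree-$3$ extension $M/F$, where $M=\mathbb{Q}(\zeta_3,\sqrt[3]{3},\sqrt[3]{d})$ and $F=\mathbb{Q}(\zeta_3,\sqrt[3]{d})$, directly from the two discriminant computations already carried out above the proposition, using the tower formula for discriminants. First I would record the degree: since $d$ is cubefree with $d\ne 1,3$ and $9\nmid d$, one checks that $\sqrt[3]{3}\notin F$, so $[M:F]=3$. Both $F/\mathbb{Q}$ and $M/\mathbb{Q}$ are Galois, being splitting fields of $x^3-d$ and $(x^3-d)(x^3-3)$ respectively, so $M/F$ is cyclic of degree $3$.

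The main tool is the tower formula $|D_M|=|D_F|^{[M:F]}\,N_{F/\mathbb{Q}}(\mathfrak{d}_{M/F})$, where $D_M,D_F$ are the absolute discriminants and $\mathfrak{d}_{M/F}$ is the relative discriminant. Taking $3$-adic valuations gives
$$
v_3\bigl(N_{F/\mathbb{Q}}(\mathfrak{d}_{M/F})\bigr)=v_3(D_M)-3\,v_3(D_F).
$$
Because $3$ is the residue characteristic and $[M:F]=3$ is prime, a prime $\mathfrak{P}$ of $F$ above $3$ is either unramified in $M$ or totally ramified, and it divides $\mathfrak{d}_{M/F}$ precisely in the ramified case; hence $M/F$ is unramified above $3$ if and only if $v_3\bigl(N_{F/\mathbb{Q}}(\mathfrak{d}_{M/F})\bigr)=0$, i.e.\ if and only if $v_3(D_M)=3\,v_3(D_F)$. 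I would then substitute the values established above: $v_3(D_F)$ is $3$ when $3\nmid d$ and $d\equiv\pm1\pmod 9$, is $7$ when $3\nmid d$ and $d\not\equiv\pm1\pmod9$, and is $11$ when $3\mid d$; while $v_3(D_M)\in\{33,37\}$ as listed. Running through the four cases, the difference $v_3(D_M)-3\,v_3(D_F)$ equals $16$, $24$, and $4$ in the three non-exceptional cases, and equals $0$ exactly when $3\mid d$ and $d/3\equiv\pm1\pmod9$; this is precisely the exceptional case $d=3d_1$ with $d_1\equiv\pm1\pmod9$.

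It remains to upgrade ``ramified at some prime above $3$'' to ``totally ramified at the primes above $3$.'' Since $F$ can have several primes above $3$ (for instance when $3\nmid d$ and $d\equiv\pm1\pmod9$, where $3$ is unramified in $F/\mathbb{Q}(\zeta_3)$), I would invoke Galois symmetry: as $F/\mathbb{Q}$ is Galois, $\text{Gal}(F/\mathbb{Q})$ permutes the primes of $F$ above $3$ transitively, and lifting each automorphism to $\text{Gal}(M/\mathbb{Q})$ (possible since $M/\mathbb{Q}$ is Galois) shows that all these primes share the same ramification index in $M$. Thus in the three non-exceptional cases, where $v_3\bigl(N_{F/\mathbb{Q}}(\mathfrak{d}_{M/F})\bigr)>0$ forces at least one prime above $3$ to ramify, every prime above $3$ ramifies, and each does so totally because $[M:F]=3$ is prime; in the exceptional case the relative discriminant is prime to $3$, so $M/F$ is unramified above $3$.

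I do not expect a serious obstacle, since the heavy lifting is the discriminant computation already done. The one step requiring genuine argument beyond arithmetic is this last passage from a nontrivial relative discriminant to total ramification at \emph{all} primes above $3$: here the Galois-transitivity observation is needed to exclude the a priori possibility that some primes above $3$ ramify while others split, after which prime degree guarantees that ramification is total wherever it occurs.
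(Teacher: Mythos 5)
Your proposal is correct and follows essentially the same route as the paper: both arguments compare $v_3(D_M)$ with $3\,v_3(D_F)$ via the tower formula for discriminants, using the valuations $3,7,11$ for $\mathbb Q(\zeta_3,\sqrt[3]{d})$ and $33,37$ for $\mathbb Q(\zeta_3,\sqrt[3]{3},\sqrt[3]{d})$ computed just before the proposition, and your case-by-case differences $16,24,4,0$ match the paper's observation that $v_3(D_F)<\tfrac13 v_3(D_M)$ except when $d=3d_1$ with $d_1\equiv\pm1\pmod 9$. The only addition is that you make explicit the Galois-transitivity step showing all primes above $3$ have the same (hence total, since the degree is prime) ramification index, which the paper leaves implicit.
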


For $p=2$, we use the following.
\begin{proposition}\label{totram2}
Let $d$ be odd and squarefree. Let $K_0=\mathbb Q(\sqrt{-1}, \sqrt{d})$ and $K_1=\mathbb Q(\sqrt{-1}, \sqrt{d}, \sqrt{2})$.
Then $K_1/K_0$ is totally ramified at the primes of $K_0$ above 2.
If $d$ is odd and squarefree and $d\not
\equiv \pm 1 \pmod 8$, then there is only one prime of $K_0$ dividing $2$.
\end{proposition}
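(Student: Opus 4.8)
The plan is to exploit the fact that $K_1=\mathbb{Q}(\sqrt{-1},\sqrt d,\sqrt 2)$ is a multiquadratic field, hence abelian over $\mathbb{Q}$, so that all ramification and splitting behaviour at $2$ can be read off from its seven quadratic subfields together with the inertia and decomposition subgroups of $G=\mathrm{Gal}(K_1/\mathbb{Q})\cong(\mathbb{Z}/2)^3$ (we assume $d\ne 1$, so that $K_0$ is genuinely biquadratic). I will repeatedly use two elementary facts: for squarefree $m$, the prime $2$ ramifies in $\mathbb{Q}(\sqrt m)/\mathbb{Q}$ exactly when $m\equiv 2,3\pmod 4$; and for $m\equiv 1\pmod 4$ the prime $2$ splits when $m\equiv 1\pmod 8$ and is inert when $m\equiv 5\pmod 8$. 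Throughout I write $d^{\ast}=d$ if $d\equiv 1\pmod 4$ and $d^{\ast}=-d$ if $d\equiv 3\pmod 4$, so that $d^{\ast}\equiv 1\pmod 4$ and $\sqrt{d^{\ast}}\in K_0$.

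For the total ramification I first note $[K_1:K_0]=2$: since $d$ is odd, $\mathbb{Q}(\sqrt 2)$ equals none of the three quadratic subfields $\mathbb{Q}(\sqrt{-1}),\mathbb{Q}(\sqrt d),\mathbb{Q}(\sqrt{-d})$ of $K_0$, so $\sqrt 2\notin K_0$. Let $\tau$ generate $H=\mathrm{Gal}(K_1/K_0)$, the automorphism fixing $K_0$ and sending $\sqrt 2\mapsto-\sqrt 2$. Because $G$ is abelian, the inertia group $I_2\subseteq G$ at $2$ is the same for every prime above $2$, and the ramification index of $K_1/K_0$ at such a prime equals $|H\cap I_2|$, which exceeds $1$ precisely when $\tau\in I_2$. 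To locate $I_2$ I determine which of the seven quadratic subfields are unramified at $2$: the fields $\mathbb{Q}(\sqrt{-1}),\mathbb{Q}(\sqrt 2),\mathbb{Q}(\sqrt{-2}),\mathbb{Q}(\sqrt{2d}),\mathbb{Q}(\sqrt{-2d})$ all ramify, while among $\mathbb{Q}(\sqrt d),\mathbb{Q}(\sqrt{-d})$ exactly the one equal to $\mathbb{Q}(\sqrt{d^{\ast}})$ is unramified. Thus precisely one nontrivial character of $G$ is trivial on $I_2$, so $|I_2|=4$ and the fixed field of $I_2$ is $\mathbb{Q}(\sqrt{d^{\ast}})$. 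Since $\sqrt{d^{\ast}}\in K_0$ and $\tau$ fixes $K_0$, the automorphism $\tau$ fixes $\mathbb{Q}(\sqrt{d^{\ast}})$ and hence lies in $I_2$. Therefore $H\subseteq I_2$, the ramification index is $|H|=2$, and $K_1/K_0$ is totally ramified at every prime above $2$.

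For the second assertion I run the same analysis on $K_0=\mathbb{Q}(\sqrt{-1},\sqrt d)$, with $\mathrm{Gal}(K_0/\mathbb{Q})\cong(\mathbb{Z}/2)^2$ and quadratic subfields $\mathbb{Q}(\sqrt{-1}),\mathbb{Q}(\sqrt d),\mathbb{Q}(\sqrt{-d})$. Exactly one of these, namely $\mathbb{Q}(\sqrt{d^{\ast}})$, is unramified at $2$, so the inertia group has order $2$, giving $e=2$, and the maximal subextension unramified at $2$ is $\mathbb{Q}(\sqrt{d^{\ast}})$. The residue degree $f$ is then $2$ or $1$ according as $2$ is inert or split in $\mathbb{Q}(\sqrt{d^{\ast}})$, i.e.\ according as $d^{\ast}\equiv 5$ or $1\pmod 8$. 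Unwinding the definition of $d^{\ast}$ across the four residues of $d$ modulo $8$ shows that $d^{\ast}\equiv 5\pmod 8$ exactly when $d\equiv 3$ or $5\pmod 8$, that is, exactly when $d\not\equiv\pm 1\pmod 8$. In that case $ef=4$, forcing $g=1$, so there is a single prime of $K_0$ above $2$, as claimed.

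The argument is essentially bookkeeping, and the only place demanding genuine care is the local behaviour at the wild prime $2$; this is exactly where the abelian (multiquadratic) structure pays off, since it reduces everything to the two elementary quadratic facts above rather than a direct ramification computation in a degree-$8$ field. As a cross-check one can instead pass to completions and use that $\mathbb{Q}_2(\zeta_8)=\mathbb{Q}_2(i,\sqrt 2)$ is totally ramified over $\mathbb{Q}_2$, so that adjoining $\sqrt 2$ to any completion of $K_0$ containing $\mathbb{Q}_2(i)$ stays ramified. The slightly delicate point in that approach is that $\sqrt 2=(1+i)\sqrt{-i}$ carries a unit factor $-i$ whose square root generates a \emph{ramified} (not an unramified) extension of $\mathbb{Q}_2(i)$; the cyclotomic description of $\mathbb{Q}_2(\zeta_8)$ makes this transparent and is what I would fall back on if the character-theoretic bookkeeping became error-prone.
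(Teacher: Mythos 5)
Your proof is correct, and for the first assertion it takes a genuinely different route from the paper's. The paper computes the absolute discriminants $D_0=-16d^2$ and $D_1=2^{16}d^4$ via the conductor--discriminant formula and observes that $D_1/D_0^2=2^8$, so the relative discriminant of $K_1/K_0$ is a nontrivial power of the primes above $2$ and those primes ramify (total ramification then being automatic in a quadratic extension). You instead pin down the inertia subgroup $I_2\subseteq\mathrm{Gal}(K_1/\mathbb Q)$ by checking which of the seven quadratic subfields are unramified at $2$ --- a dual formulation, since the characters trivial on $I_2$ are exactly those of odd conductor --- and then note that $\mathrm{Gal}(K_1/K_0)$ fixes $\mathbb Q(\sqrt{d^*})$ and hence lies inside $I_2$. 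Your version yields a bit more (the full inertia and decomposition data at $2$, uniformly in $d$) at the cost of more group-theoretic bookkeeping, whereas the paper's is a two-line numerical check; also, the paper's deduction that \emph{all} primes above $2$ ramify, not just one, implicitly uses the Galois symmetry that your argument makes explicit. For the second assertion the two arguments coincide: $2$ ramifies in $\mathbb Q(i)$ and is inert in $\mathbb Q(\sqrt{d^*})$ when $d\not\equiv\pm1\pmod 8$, so $ef\ge 4$ forces $g=1$. Your explicit caveat that $d\ne 1$ (so that $K_0$ is genuinely biquadratic) is a point the paper leaves implicit.
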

\begin{proof}  An easy calculation with the conductor-discriminant formula shows that
the discriminant $D_0$ of $K_0$ is $-16d^2$ and the discriminant $D_1$ of $K_1$ is $2^{16}d^4$.
Since $D_1/D_0^2= 2^8$ is even, the primes above 2 ramify in $K_1/K_0$.

Since $2$ is inert in $\mathbb Q(\sqrt{\pm d})/\mathbb Q$, where $\pm d\equiv 1\pmod 4$,
and 2 ramifies in $\mathbb Q(\sqrt{-1})/\mathbb Q$,
there is only one prime of $K_0$ over 2.
\end{proof}

For a prime of $K_0$, the inertia subgroup of $\text{Gal}(K_{\infty}/K_0)$ surjects onto the inertia subgroup of $K_1/K_0$.
If $K_1/K_0$ is ramified then $K_{\infty}/K_0$ is totally ramified. This is what we need for our examples in Section \ref{SectComp}.

The following is useful in calculations using Proposition \ref{Chev}.
\begin{proposition}\label{unram2} In the notation of Proposition \ref{totram2}, $K_n/k_n$ is unramified at the primes above 2 for all $n\ge 0$.
\end{proposition}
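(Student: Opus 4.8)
The plan is to reduce the assertion for all $n$ to the single case $n=0$ and then appeal to the stability of the unramified property under base change, using only the standing relation $K_n=K_0k_n$.

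First I would establish that $K_0/k_0$ is unramified at the prime above $2$. The proof of Proposition~\ref{totram2} records that the discriminant of $K_0=\mathbb Q(\sqrt{-1},\sqrt d)$ equals $-16d^2$, while $\mathrm{disc}(k_0/\mathbb Q)=-4$ and $[K_0:k_0]=2$. By the tower formula for discriminants, the norm of the relative discriminant satisfies
$$
N_{k_0/\mathbb Q}\big(\mathfrak d_{K_0/k_0}\big)=\frac{\mathrm{disc}(K_0/\mathbb Q)}{\mathrm{disc}(k_0/\mathbb Q)^{[K_0:k_0]}}=\frac{-16d^2}{16}=-d^2 ,
$$
which is odd since $d$ is odd. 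Hence the prime $(1+i)$ of $k_0$ above $2$ does not divide $\mathfrak d_{K_0/k_0}$, so $K_0/k_0$ is unramified at $2$.

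Next I would pass to a general layer. Let $\mathfrak p_n$ be a prime of $k_n$ above $2$, lying over $\mathfrak q=(1+i)$ in $k_0$. Because $K_n=K_0k_n$, after completing, the local extension $(K_n)_{\mathfrak P}/(k_n)_{\mathfrak p_n}$ is the compositum of $(K_0)_{\mathfrak Q}/(k_0)_{\mathfrak q}$ with $(k_n)_{\mathfrak p_n}$. Since $(K_0)_{\mathfrak Q}/(k_0)_{\mathfrak q}$ is unramified, it is generated over $(k_0)_{\mathfrak q}$ by a root of unity whose order is prime to $2$ (here the residue field is $\mathbb F_2$, so one may take $\zeta_3$); adjoining such a root of unity to the larger field $(k_n)_{\mathfrak p_n}$ again produces an unramified extension. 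Therefore $(K_n)_{\mathfrak P}/(k_n)_{\mathfrak p_n}$ is unramified, and so $K_n/k_n$ is unramified at every prime above $2$, for all $n\ge 0$.

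The only genuine content is the base case that $K_0/k_0$ is unramified at $2$; everything afterward is the formal fact that unramified extensions remain unramified after base change. The natural worry is that $k_n/k_0$ is \emph{totally} ramified at $2$, but this is irrelevant: that ramification lives entirely in the bottom field $(k_n)_{\mathfrak p_n}$ of the relative extension and cannot create ramification in the compositum with an unramified extension. I expect no real obstacle here; the one point to phrase carefully is the local base-change stability of the unramified property, which I would state as the invariance of unramifiedness under adjoining prime-to-$2$ roots of unity.
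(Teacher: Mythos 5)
Your proof is correct and follows essentially the same route as the paper: establish that $K_0/k_0$ is unramified at $2$ and then observe that this persists in the compositum $K_n=K_0k_n$. The paper gets the base case in one line by noting that $K_0=k_0(\sqrt{\pm d})$ with $\pm d\equiv 1\pmod 4$ (so $\mathbb Q(\sqrt{\pm d})/\mathbb Q$ is already unramified at $2$) and leaves the base-change step implicit, whereas you verify the base case via the discriminant tower formula and spell out the local base-change argument explicitly; both are fine.
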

\begin{proof} Since $d$ is odd, either $d\equiv 1\pmod 4$ or $-d\equiv 1\pmod 4$,
so $K_0=\mathbb Q(\sqrt{-1}, \sqrt{d})=\mathbb Q(\sqrt{-1}, \sqrt{-d})$ is an unramified extension of $k_0$, \end{proof} 

\section{Conjectural calculation of $A_2$}\label{conj}

When $p=3$, the field $K_2$ has degree 54 over $\mathbb Q$, which makes computation of its class number
difficult. In this section, we suggest a conjectural method for computing, or at least estimating, its class number
in many cases. (Actually, all of the calculations in this paper are conjectural, since Pari-GP assumes the Generalized Riemann
Hypothesis in its algorithms, but the methods of this section are probably much less trustworthy then GRH.)
The present algorithm was inspired by a method used by Van Huele \cite{VH}, where fixed fields of a Klein 4-group and its subgroups
were used to calculated class numbers. In the present case, we are using the fixed fields of an $S_3$ and its subgroups.

Let $K_n'$ be any subfield of $K_n$ such that $[K_n : K_n']=2$. All such fields are conjugate over $\mathbb Q$, so the choice does not matter.
Let $A_n'$ and $h_n'$ be the $3$-Sylow subgroup of the class group and the class number of $K_n'$.
Let $h_n$ be the class number of $K_n$. Honda \cite{Honda} showed that $h_0 = (h_0')^2$ or $(1/3)(h_0')^2$. Moreover, Schoof \cite{Schoof}
showed that if $d$ contains no prime factors $q\equiv 1\pmod 3$ then $h_0 = (h_0')^2$. It seems likely that a result similar to this
should hold for $h_n$ and $h_n'$. 

In the numerical examples in the next section, we compute the Iwasawa invariants for various $d$. In all of the cases where
we are able to evaluate $\lambda, \mu, \nu$ exactly, we can deduce the actual value of the 3-part of $h_2$.
We also calculated $h_2'$, which is much easier than $h_2$ because $[K_2' : \mathbb Q]=27$. This suggested the following.

\begin{conjecture} Suppose $d$ is a product of primes $q\equiv 2$ or $5\pmod 9$ and $d\not\equiv \pm 1\pmod 9$.
Then $h_2= (h_2')^2$.
\end{conjecture}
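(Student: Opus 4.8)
The plan is to exploit the structure of $G:=\mathrm{Gal}(K_2/\mathbb{Q})$, a group of order $54$ containing $S_3=\langle\sigma,\tau\rangle$, where $\sigma$ is the complex conjugation of Section \ref{SectPrelim} and $\tau$ generates $\mathrm{Gal}(K_2/k_2)$; one has $\sigma\tau\sigma^{-1}=\tau^{-1}$, since $\tau:\sqrt[3]{d}\mapsto\zeta_3\sqrt[3]{d}$ while $\sigma$ fixes $\sqrt[3]{d}$ and inverts $\zeta_3$. The field $K_2'$ is, up to conjugacy, the fixed field of an order-$2$ subgroup $\langle\sigma\rangle$, so $\mathrm{Gal}(K_2/K_2')=\langle\sigma\rangle$. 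First I would reduce the conjecture to a statement about $A_2$. Since $[K_2:K_2']=2$ is prime to $3$, the natural extension map identifies the $3$-class group $A_2'$ of $K_2'$ with the $\sigma$-fixed subgroup $A_2^{\sigma}=A_2^+$; writing $A_2=A_2^+\oplus A_2^-$ for the $\pm1$-eigenspaces of $\sigma$ (legitimate because $A_2$ is a $3$-group and $2$ is invertible), this gives $|A_2'|=|A_2^+|$ and $|A_2|=|A_2^+|\,|A_2^-|$. Hence the $3$-part of the conjecture is equivalent to the single equality $|A_2^+|=|A_2^-|$. Away from $3$, where $\mathbb{Z}_\ell[S_3]$ is semisimple, the corresponding equality should follow from the Brauer relation $\zeta_{K_2}\zeta_{k_2^+}^2=\zeta_{k_2}\zeta_{K_2'}^2$ attached to $K_2/k_2^+$, so the arithmetic content lies in the $3$-part.

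Next I would put a module structure on $A_2$. Because $k_0=\mathbb{Q}(\sqrt{-3})$ has class number $1$ and only the prime above $3$ ramifies in $k_2/k_0$, totally, a standard argument gives that the $3$-class number of $k_2$ is trivial; as in the proof of Theorem \ref{LambdaEvenMore}, the norm $1+\tau+\tau^2$ then annihilates $A_2$, so $A_2$ is a module over $R:=\mathbb{Z}_3[\tau]/(1+\tau+\tau^2)\cong\mathbb{Z}_3[\zeta_3]$. This $R$ is a discrete valuation ring with uniformizer $\pi=\zeta_3-1$ and $\pi^2\sim 3$, and $\sigma$ acts as a semilinear involution inducing complex conjugation on $R$. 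The operator $\theta:=\tau-\tau^{-1}$ acts as multiplication by $\zeta_3-\zeta_3^{-1}=-\zeta_3\pi$, i.e.\ by a uniformizer, and it anticommutes with $\sigma$: $\sigma\theta\sigma^{-1}=\tau^{-1}-\tau=-\theta$. Thus $\theta$ interchanges $A_2^+$ and $A_2^-$, and it is this interplay that I would use to compare their orders.

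Writing $A_2\cong\bigoplus_j R/\pi^{e_j}$ and passing to the $\pi$-adic graded pieces $\mathrm{gr}^i=\pi^iA_2/\pi^{i+1}A_2$, multiplication by $\theta$ induces surjections $\mathrm{gr}^i\twoheadrightarrow\mathrm{gr}^{i+1}$, while $\sigma\theta=-\theta\sigma$ forces $\theta:(\mathrm{gr}^i)^\pm\twoheadrightarrow(\mathrm{gr}^{i+1})^\mp$. Tracking the $\sigma$-eigenspace dimensions up this filtration, one finds that a summand of even $\pi$-length is automatically balanced, whereas a summand of odd length contributes a single extra factor of $3$ to $|A_2^+|$ or to $|A_2^-|$, the sign being that of $\sigma$ on the corresponding line of the socle $A_2[\pi]$. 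This yields the clean formula
$$
\frac{|A_2^+|}{|A_2^-|}=3^{\,\delta},\qquad \delta=\sum_{e_j\ \mathrm{odd}}\varepsilon_j,\quad \varepsilon_j\in\{+1,-1\},
$$
and in particular recovers Honda's dichotomy $h_0=(h_0')^2$ or $\tfrac13(h_0')^2$ as the cases $\delta=0$ or $\delta=\pm1$. The conjecture is therefore equivalent to $\delta=0$.

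The hard part will be proving $\delta=0$, i.e.\ that the signed count of odd-length summands vanishes; this is precisely what keeps the statement conjectural. The strategy is to control the $\sigma$-action on the socle $A_2[\pi]$ and the jump pattern of the $e_j$ by genus theory: Chevalley's formula (Proposition \ref{Chev}) applied to $K_2/k_2$ computes the ambiguous classes, hence $\dim_{\mathbb{F}_3}A_2/\pi A_2$, in terms of the ramified primes counted in the proof of Theorem \ref{Iw}, and a $\sigma$-refined version must be used to record how these pieces split between $A_2^+$ and $A_2^-$. For $n=0$, Schoof's argument produces an isomorphism $A_0\cong A\times A$ exactly when no prime dividing $d$ is $\equiv1\pmod 3$, by showing that the ramified primes and the units of the base contribute symmetrically; the hypotheses here, $q\equiv2$ or $5\pmod 9$ (so each $q$ is inert in $k_0$ and $\not\equiv-1\pmod 9$) together with $d\not\equiv\pm1\pmod 9$ (which fixes the ramification of $3$), are designed to force the same cancellation at level $2$. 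Carrying out this bookkeeping over the degree-$9$ totally real base $k_2^+$ rather than over $\mathbb{Q}$, where there are far more units to control and the number of ramified primes is multiplied by $9$, is the genuine obstacle, and making it unconditional is what would upgrade the conjecture to a theorem.
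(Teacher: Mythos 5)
There is a genuine gap, and you have in fact identified it yourself: the statement is a \emph{conjecture} in the paper, supported only by computations (and by Honda's and Schoof's results at level $0$), and your proposal likewise stops exactly where the difficulty begins. Your reductions are sound and worth recording: the identification $A_2'\cong A_2^{\sigma}=A_2^+$ via extension of ideals (using that $[K_2:K_2']=2$ is prime to $3$), the resulting equivalence of the $3$-part of the conjecture with $|A_2^+|=|A_2^-|$, the $\mathbb Z_3[\zeta_3]$-module structure on $A_2$ coming from the triviality of the $3$-class number of $k_2$ (as in the proof of Theorem \ref{LambdaEvenMore}), and the observation that $\theta=\tau-\tau^{-1}$ is a uniformizer anticommuting with $\sigma$, which yields $|A_2^+|/|A_2^-|=3^{\delta}$ for some integer $\delta$ governed by the odd-length summands. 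But the conjecture is precisely the assertion $\delta=0$, and nothing in your argument engages the hypotheses ($q\equiv 2,5\pmod 9$, $d\not\equiv\pm1\pmod 9$) in a way that could force it. A ``$\sigma$-refined Chevalley formula'' over $k_2^+$ is named but not supplied, and that is the entire content of the problem.

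Two smaller points. First, your formula allows $\delta=\pm1$ at level $0$ and you present this as recovering Honda's dichotomy, but Honda's theorem is stronger: it gives $h_0=(h_0')^2$ or $\tfrac13(h_0')^2$, i.e.\ $\delta\in\{0,1\}$, so even at level $0$ there is sign information that the purely formal graded-piece bookkeeping cannot see. Second, the paper's data shows that at higher levels $\delta$ genuinely takes both signs outside the hypotheses: $h_1=3(h_1')^2$ for $d=1870$ (so $\delta=-1$) and $h_1=\tfrac13(h_1')^2$ for $d=17,53,\dots$ (so $\delta=+1$). Any proof must therefore make essential, quantitative use of the congruence conditions on the primes dividing $d$; your sketch treats them only as a heuristic for why cancellation ``should'' occur. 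So the proposal is a reasonable reformulation and programme, consistent in spirit with the paper's discussion of ambiguous classes and with Schoof's $A\times A$ result at level $0$, but it is not a proof, and the paper offers none either.
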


The conjecture holds for all of the numerical examples of the next section for which $d$ satisfies the specified conditions
and for which we obtained exact values for $\lambda, \mu, \nu$ (except possibly $d=230$, where the calculation of $h_2'$ took too long to complete).
In these examples, we also have $h_1=(h_1')^2$. 

Some condition must be made on the primes dividing $d$. 
For $d=1870$, we know that $\mu=3, \lambda=0, \nu\ge 4$, and, in fact, 
the computations that give these values yield a lower bound on the power of 3 dividing $h_2$, so we have  $3^{31}\mid h_2$.
However, $3^{15}\mid\mid h_2'$, so $h_2\ge 3(h_2')^2$ (we are ignoring the non-3-parts of the class numbers, but it can be shown
that these agree for $h_n$ and $(h_n')^2$). For $d=1870$, we also have $A_1=9\times 3^{11}$ and $A_1'=3^6$, so
$h_1=3(h_1')^2$.

One might be tempted to guess that a stronger statement is true: $A_n\simeq A_n'\times A_n'$. However,
for $d=5\cdot 11\cdot 173$, we have $A_1=27\times 9^2\times 3^7$, but $A_1'=9^2\times 3^3$. Therefore, $h_1=(h_1')^2$
but $A_1$ is not isomorphic to $A_1'\times A_1'$.

For $d= 10$, $44$, $46$, $253$, $5\cdot 29$, $17\cdot 53$, $5\cdot 11\cdot 53$, and $5\cdot 17\cdot 53$, 
we have $h_1=(h_1')^2$, but for the following values of $d$ we have $h_1=(1/3)(h_1')^2$:
$17,  53,  55,  2\cdot 167, 2\cdot 257, 5\cdot 83, 5\cdot 11\cdot 17$. The cases in this latter list with $d\equiv 1\pmod 9$ are why we exclude $d\equiv \pm 1\pmod 9$ from the conjecture.

It seems likely that $h_n/(h_n')^2$ is bounded above and below by constants depending on $n$. Honda's result
gives the $n=0$ case, with a lower bound of 1 and an upper bound of 3.

\section{Numerical examples}\label{SectComp}

In the notation of Section 6, we have an exact sequence
$$
0\to F_1\to Y_0 \to E \to F_2\to 0,
$$
where $E$ is elementary and $F_1, F_2$ are finite.
The general strategy is to write 
$$E =E_1\oplus E_2,
$$
where $E_1$ is a sum of terms of the form $\Lambda/(p^j)$ and $E_2$ is what remains,
and then analyze the orders of $E_2/\nu_nE_2$ and $F_1/\nu_nF_1$. 
Proposition \ref{usefulprop} yields the following.
\begin{proposition}\label{ene0}
$$
p^{e_n-e_0-\mu(p^n-1)} = \#(F_1/\nu_n F_1)\, \#(E_2/\nu_n E_2).
$$
\end{proposition}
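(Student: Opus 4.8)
The plan is to derive Proposition \ref{ene0} directly from Proposition \ref{usefulprop}, which already furnishes the identity
$$
p^{e_n-e_0} = \#(F_1/\nu_n F_1)\,\#(E/\nu_n E)
$$
together with the inequality $\#(E/\nu_nE)\ge p^{\mu(p^n-1)}$. The only new ingredient needed is to make precise how the factor $p^{\mu(p^n-1)}$ is produced and to show that, after isolating it, the remainder of the contribution from $E$ is exactly $\#(E_2/\nu_nE_2)$.

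First I would use the decomposition $E=E_1\oplus E_2$ stipulated in the paragraph preceding the proposition, where $E_1=\bigoplus_i\Lambda/(p^{\mu_i})$ collects the $p$-power summands and $E_2$ is the remaining elementary module, so that $\sum_i\mu_i=\mu$. Since $\nu_n$ acts componentwise on a direct sum, we have $\#(E/\nu_nE)=\#(E_1/\nu_nE_1)\,\#(E_2/\nu_nE_2)$. Next I would compute the first factor explicitly: for a single summand $\Lambda/(p^j)$, reduction mod $p^j$ makes $\nu_n$ act as multiplication by $\nu_n(T)\bmod p^j$ on $(\Lambda/(p^j))$, and since $\nu_n(T)=\omega_n(T)/T$ has the property that the cokernel of multiplication by $\nu_n$ on $\Lambda/(p^j)$ has order $p^{j(p^n-1)}$ (this is the same computation already invoked in the discussion following Theorem \ref{PropOrd}, where a term $\Lambda/(p^j)$ is said to contribute $p^{j(p^n-1)}$ to $\#(E/\nu_nE)$). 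Summing over $i$ gives
$$
\#(E_1/\nu_nE_1)=p^{\,(p^n-1)\sum_i\mu_i}=p^{\mu(p^n-1)}.
$$

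Finally I would substitute this into the formula of Proposition \ref{usefulprop}. Writing $\#(E/\nu_nE)=p^{\mu(p^n-1)}\#(E_2/\nu_nE_2)$ and combining with $p^{e_n-e_0}=\#(F_1/\nu_nF_1)\,\#(E/\nu_nE)$ yields
$$
p^{e_n-e_0}=p^{\mu(p^n-1)}\,\#(F_1/\nu_nF_1)\,\#(E_2/\nu_nE_2),
$$
and dividing through by $p^{\mu(p^n-1)}$ gives the claimed identity. Implicit here is that all the quantities involved are finite, which holds because $X$ is a finitely generated torsion $\Lambda$-module and $\nu_n$ is coprime to the characteristic polynomials of the torsion-free-type summands of $E_2$; this finiteness is already guaranteed by the hypotheses and the total ramification assumption carried over from Section \ref{Pseudo}.

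The one step that requires genuine care, rather than being a formal manipulation, is the explicit computation that each summand $\Lambda/(p^j)$ contributes precisely $p^{j(p^n-1)}$ — not merely at least that much — to $\#(E/\nu_nE)$. For the stated identity (as opposed to an inequality) I need the exact order of the cokernel of $\nu_n$ on $\Lambda/(p^j)$, which amounts to checking that $\nu_n(T)\bmod p$ is a nonzero element of $(\mathbb Z/p)[[T]]$ with the right normalization; this is the main obstacle in the sense that it is the only place where an equality, rather than a bound, must be justified, and it is exactly the computation the authors allude to when they assert the contribution of a $p$-power summand.
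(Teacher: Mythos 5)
Your proposal is correct and follows essentially the same route as the paper: the paper simply asserts that Proposition \ref{usefulprop} yields the result, with the decomposition $E=E_1\oplus E_2$ set up in the preceding paragraph and the contribution $p^{j(p^n-1)}$ of each summand $\Lambda/(p^j)$ already noted in Section \ref{Pseudo}. Your added justification of the exact equality $\#(\Lambda/(p^j,\nu_n))=p^{j(p^n-1)}$ (via $\nu_n$ being distinguished of degree $p^n-1$, so $\Lambda/(\nu_n)\cong\mathbb Z_p^{p^n-1}$) is the right way to make the paper's implicit step precise.
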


The following is useful for identifying $F_1$ and $E_2$.
\begin{lemma}\label{LemmaNonDec} Let $M$ be a finitely generated $\Lambda$-module. Then
$ M/\nu_1 M = 0$ if and only if $M=0$. More generally,
$\#(M/\nu_n M) \le \#(M/\nu_{n+1}M)$ for all $n$, and 
if $\#(M/\nu_n M) = \#(M/\nu_{n+1}M)<\infty$  for some $n$, then $\nu_nM=0$.
\end{lemma}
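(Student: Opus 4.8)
The plan is to reduce every assertion to Nakayama's lemma over the local ring $\Lambda=\mathbb Z_p[[T]]$, whose maximal ideal is $\mathfrak m=(p,T)$. The engine is the factorization
$$
\nu_{n+1}=\xi_n\,\nu_n,\qquad \xi_n:=\frac{\omega_{n+1}}{\omega_n}=\sum_{j=0}^{p-1}(1+T)^{jp^n},
$$
which follows from $\omega_{n+1}=\bigl((1+T)^{p^n}\bigr)^p-1=\omega_n\bigl(\sum_{j=0}^{p-1}(1+T)^{jp^n}\bigr)$ after dividing by $T$. Setting $T=0$ gives $\xi_n(0)=p$, so $\xi_n\in\mathfrak m$; this single observation makes the whole argument run. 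First I would record that $\nu_0=1$, so that the first assertion is literally the case $n=0$ of the last one and may be folded into it. I would nonetheless give the one-line direct argument for that case: if $M/\nu_1M=0$ then $M=\nu_1M\subseteq\mathfrak m M$ (since $\nu_1(0)=p$ forces $\nu_1\in\mathfrak m$), whence $M=\mathfrak m M$ and $M=0$ by Nakayama.

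For the monotonicity, the factorization gives $\nu_{n+1}M=\xi_n\nu_nM\subseteq\nu_nM$, so there is a natural surjection $M/\nu_{n+1}M\twoheadrightarrow M/\nu_nM$ whose kernel is $\nu_nM/\nu_{n+1}M$. A quotient can only shrink cardinality, so $\#(M/\nu_nM)\le\#(M/\nu_{n+1}M)$, read in the obvious way when the orders are infinite.

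For the rigidity statement, suppose the two orders are equal and finite. Then the surjection above is a bijection, hence an isomorphism, so its kernel vanishes: $\nu_nM=\nu_{n+1}M=\xi_n\nu_nM$. Put $N=\nu_nM$; as a submodule of the finitely generated module $M$ over the Noetherian ring $\Lambda$, it is itself finitely generated. From $N=\xi_nN\subseteq\mathfrak m N$ and Nakayama I conclude $N=0$, that is $\nu_nM=0$, as required. The only points demanding care are the verification that $\xi_n$ (equivalently $\nu_1$) lies in $\mathfrak m$ and the finite generation of $N$ needed to invoke Nakayama; beyond these there is no serious obstacle, since the result is in the end a clean packaging of Nakayama's lemma together with the divisibility $\nu_n\mid\nu_{n+1}$.
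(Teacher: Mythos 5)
Your proof is correct and follows essentially the same route as the paper: use the surjection $M/\nu_{n+1}M\twoheadrightarrow M/\nu_nM$ coming from $\nu_n\mid\nu_{n+1}$, deduce $\nu_nM=(\nu_{n+1}/\nu_n)\nu_nM$ when the finite orders agree, and apply Nakayama since $\nu_{n+1}/\nu_n$ lies in the maximal ideal. Your explicit verification that $\xi_n(0)=p$ and your remark on the finite generation of $\nu_nM$ merely spell out details the paper leaves implicit.
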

\begin{proof}  Since $\nu_0=1$, the first statement is a special case of the second. Since $\nu_n$ divides $\nu_{n+1}$, 
there is a natural surjection $M/\nu_{n+1}M\to M/\nu_n M$.
If the orders are equal and finite, the map is an isomorphism, so $\nu_{n}M=\nu_{n+1}M= (\nu_{n+1}/\nu_n)\nu_nM$. 
 Since $\nu_{n+1}/\nu_n$ is in the maximal ideal of $\Lambda$, Nakayama's Lemma implies that $\nu_nM=0$. 
\end{proof}

\subsection{$\mathbf{p=3}$}

Using Pari-GP \cite{Pari}, we computed several examples. Let 
$$K_0=\mathbb Q(\zeta_3, \sqrt[3]{d}).
$$
We concentrate on the case
where $d$ is a product of primes that are 2 mod 3.
\medskip

\noindent
{$\mathbf{d= 22, 34, 58, 68, 85, 92, 164, 236:}$}
In these cases, the 3-parts of the class groups are $A_0=3^2$ and $A_1=3^4$. 
Therefore, we have $e_1-e_0=2$.

 Since $e_1-e_0\ge (3-1)\mu$, and $\mu>0$, we must have $\mu=1$
and $E$ contains one summand $E_1=\Lambda/(3)$. Write $E=E_1\oplus E_2$.
Then $\#(E_1/\nu_1E_1)=3^2$.
Proposition \ref{ene0} tells us that
\begin{equation*}\label{d22F}
 1=\#(F_1/\nu_1F_1) \#(E_2/\nu_1E_2).
\end{equation*}
Lemma \ref{LemmaNonDec} implies that $E_2=F_1=0$.
Therefore, 
$$
\#A_n = \#A_0\, \#(\Lambda/(3,\nu_n)) = 3^{3^n + 1}.
$$
This says that
$$
\mu=1, \quad \lambda = 0, \quad \nu=1.
$$
\medskip

\noindent
$\mathbf{d=10, 44, 46, 253:}$ In these cases, $e_0=0$ and $e_1=2$. This yields, as in the previous example, 
$$
\mu=1, \quad \lambda=0, \quad \nu=-1.
$$

\medskip
\noindent
$\mathbf{d=110, 230:}$
Since three primes that are 2 mod 3 divide $d$, we have $\mu \ge 2$. Calculations yield $A_0=3^4$ and $A_1=3^8$, so $e_1-e_0 = 4$. This implies that
$\mu\le 2$, so $\mu = 2$. Therefore, $\#(E_1/\nu_1E_1)=3^4$. Proposition \ref{ene0} implies that
$$
1=\#(F_1/\nu_1F_1)\#(E_2/\nu_1E_2),
$$
so $E_2=F_1=0$. 
It follows that
$$
\mu=2, \quad  \lambda = 0, \quad \nu=2.
$$
\medskip

\noindent
$\mathbf{d=170:}$ We have $A_0= 3^2$ and $A_1=3^6$, so $e_1-e_0=4$. As in the previous example, 
$$
\mu=2, \quad  \lambda = 0, \quad \nu=0.
$$

\medskip

\noindent
{$\mathbf{d=1870:}$} 
We know $\mu \ge 3$. We have $3^6$ at the 0-th level and $3^{11} \times 9$ at the first level, so $e_1-e_0=7$.
This yields $\mu\le 3$, so $\mu=3$. 

We now have
$$
3=\#(E_2/\nu_1E_2) \#(F_1/\nu_1F_1).
$$
If $\lambda >0$, we must have $\lambda\ge 2$ by Theorem \ref{PropEven} or Theorem \ref{LambdaEvenMore}. From Theorem \ref{LambdaThm},
we deduce that
$$
\#(E_2/\nu_1E_2) \ge 3^2,
$$
which in impossible. Therefore, $\lambda = 0$ and $E_2=0$ and
we must have $\#(F_1/\nu_1F_1)= 3$, hence $\#(F_1/\nu_nF_1)\ge 3$ for all $n\ge 1$.
Therefore,
$$
\#A_n=\#A_0 \#(F_1/\nu_n F_1) 3^{3 (3^n-1)}= \#(F_1/\nu_n F_1) 3^{3\cdot 3^n +3}\ge 3^{3\cdot 3^n +4}.
$$
In summary,
$$
\mu=3, \quad \lambda  = 0, \quad \nu\ge 4.
$$

\medskip
\noindent
$\mathbf{d=94, 115, 205, 295:}$ 
We have $A_0=3^2$ and $A_1=3^6$, so $e_1-e_0 = 4$. This means that $\mu = 1$ or $2$, 

Let's assume the conjecture of Section \ref{conj}. In the notation of that section, $3^6\mid\mid h_2'$ for
$d=94$ and $295$. The conjecture implies that $e_2=12$, so $e_2-e_0=10 \le (3^2-1)\mu$. Therefore, $\mu=1$.
Therefore,
$$
3^{6} = \#A_1=\#A_0 \#(F_1/\nu_1 F_1) \#(E_2/\nu_1E_2) 3^{3^1-1}=3^{4} \#(F_1/\nu_1 F_1) \#(E_2/\nu_1 E_2)
$$
and 
$$
3^{12} = \#A_2=\#A_0 \#(F_1/\nu_2 F_1) \#(E_2/\nu_2E_2) 3^{3^2-1}=3^{10} \#(F_1/\nu_2 F_1) \#(E_2/\nu_2 E_2).
$$
It follows that $\nu_1E_2=0$ and $\#F_1=3$, which yields $\lambda=0$ and $\nu=3$.

Note that if the conjecture fails for these values of $d$, but is only off by a small power of 3, then we still obtain $\mu=1$ but we do not obtain values
for $\lambda$ and $\nu$.

For $d=115$, we have $3^7\mid\mid h_2'$. The conjecture yields $14-2=e_2-e_0\ge (3^2-1)\mu$, so $\mu=1$. We do not obtain the values of $\lambda$ and $\nu$.

For $d=205$, we have $3^8\mid\mid h_2'$. Since $d\equiv 1\pmod 9$, the conjecture does not apply. However, if $h_2\mid 3(h_2')^2$, we have $17-2\ge (3^2-1)\mu$, hence $\mu=1$.

\medskip
\noindent
$\mathbf{d=5\cdot 11\cdot 173:}$ We must have $\mu\ge 2$. Computation yields
$A_0= 9^2 \times 3^2$  and  $A_1= 27 \times 9^2 \times 3^7$.
So $e_1-e_0=8$, hence $\mu\le 4$. 
If $\mu=4$ then $F_1=E_2=0$, so $\lambda=0$ and $\nu=0$. Proposition \ref{usefulprop}
tells us that $e_n=4\cdot 3^n +2$. However, if
$\mu=2$ or $3$, we do not obtain information on $\lambda$ and $\nu$.

\medskip
\noindent
$\mathbf{d=17\cdot 53\cdot 71:}$ We have $A_0 = 3^4$ and $A_1= 81\times 27\times 3^8$. Therefore,
$e_1-e_0=11$, so $2\le \mu \le 5$. The structure of the groups suggests $\lambda=2$.

\medskip

\subsection{$\mathbf{p=2}$}

The advantage of $p=2$ is that the fields $K_n$ have lower degrees over $\mathbb Q$ than for other choices of $p$.
This allows computations of class groups for more values of $n$.

Let $d$ be odd and squarefree and let 
$$k_0=\mathbb Q(i),\qquad K_0=k_0(\sqrt{d}).$$

\medskip

\noindent
$\mathbf{d=21:}$ The class groups of $K_0, K_1, K_2$ are $2, 2^2, 2^4$. Therefore, $e_1-e_0=1$, which implies 
that $\mu\le 1$, hence $\mu=1$. Therefore
$$
1=2^{e_1-e_0-\mu(2-1)}=\#(F_1/\nu_1F_1)\#(E_2/\nu_1E_2).
$$
Lemma \ref{LemmaNonDec} implies that $F_1=E_2=0$, which yields  
$$
\mu=1, \quad \lambda=0, \quad \nu=0.
$$
Proposition \ref{usefulprop} tells us that $e_n=2^n$ for all $n$.
\medskip

\noindent
$\mathbf{d=33, 57:}$ The class groups of $K_0, K_1, K_2, K_3$ are $2, 4\times 2, 8\times 2^3, 16\times 2^7$, so
$e_1-e_0=2$, $e_2-e_0=5$, $e_3-e_0=10$. Therefore, 
$$\mu=1.$$
Proposition \ref{ene0} implies that
\begin{equation}\label{ene02}
2^{e_n-e_0-(2^n-1)} = \#(F_1/\nu_n F_1) \#(E_2/\nu_n E_2)
\end{equation}
for all $n\ge 0$. For $n=1$, the left side of this equation is $2$,  for $n=2$ the left side is $2^2$, and for $n=3$ the left side is $2^3$. 
Theorem \ref{LambdaThm} implies that $\lambda\le 1$. The existence of cyclic subgroups $2, 4, 8, 16$ at levels 0, 1, 2, 3
makes it likely that $\lambda=1$, which Theorem \ref{PropEven} does not prohibit.  If $E_2$ has $\lambda=1$, then $\#(E_2/\nu_1E_2)\ge 2$
and $\#(E_2/\nu_2E_2)\ge 4$. Equation (\ref{ene02}) implies that these are equalities, so $F_1=0$, but this does not
yield the value of $\nu$. However, we have $e_n=2^n+n$ for $n=0, 1, 2, 3$, so it seems likely that $\mu=1$, $\lambda=1$, and $\nu=0$.
\medskip

\noindent
{$\mathbf{d=3\cdot 7\cdot 11:}$} The class groups of $K_0, K_1, K_2, K_3$ are
$$
12\times 2, \quad 12\times 2^3, \quad 12\times 2^7, \quad 12 \times 2^{15},
$$
respectively. We know that $\mu\ge 2$. Since 
$e_1-e_0=2$, we have $\mu=2$. 
Proposition \ref{ene0} yields that
$$
1=2^{e_1-e_0-\mu\cdot (2-1)} = \#(F_1/\nu_1 F_1)\#(E_2/\nu_1 E_2).
$$
Therefore, $F_1=E_2=0$, from which it follows that
$$
\mu=2, \quad \lambda = 0, \quad \nu=1.
$$

\medskip
\noindent
{$\mathbf{d=3\cdot 11\cdot 23:}$} The class groups of $K_0, K_1, K_2$ are
$24\times 2$, $24\times 6\times 2^2$, and  $24\times 6\times 2^6$, so $e_1-e_0=2$, $e_2-e_0 = 6$.
This yields $\mu=2$. Therefore,
$$
1 = 2^{e_1-e_0-\mu(2-1)}  \#(F_1/\nu_1 F_1)\#(E_2/\nu_1 E_2).
$$
Therefore,
$$
\mu=2, \quad \lambda = 0, \quad \nu = 2.
$$

\medskip
\noindent
{$\mathbf{d=3\cdot 19\cdot 23:}$} The class groups of $K_0, K_1, K_2$ are
$28\times 2$, $84\times 2^3$, and  $84\times 2^7$, so $e_1-e_0 = 2$, $e_2-e_0=6$.
This yields
$$
\mu=2,\quad \lambda = 0,\quad \nu = 1.
$$

\medskip
\noindent
{$\mathbf{d=7\cdot 11 \cdot 19:}$} The class groups of $K_0, K_1, K_2$ are
$ 32 \times 2$, $160\times 2^3$,  and $160 \times 10^2 \times 2^5$, 
so $e_1-e_0= 2$,  $e_2-e_0 = 6$. This yields
$$
\mu = 2,\quad \lambda = 0,\quad \nu = 4.
$$

\medskip
\noindent
{$\mathbf{d=3\cdot 11\cdot 19:}$} The class groups of $K_0, K_1, K_2, K_3$ are
$4^2$, $12\times 4^3$, $48\times 16\times 8\times 4\times 2^4$, and
$96\times 16 \times 8\times 4 \times 2^{12}$.
We know $\mu \ge 2$.
Since $e_3-e_0= 22$, we have $\mu \le 3$.

If $\mu=3$, then
$$
2^{e_n-e_0-3(2^n-1)} = \#(F_1/\nu_n F_1)\#(E_2/\nu_n E_2)
$$
for all $n$. This yields 
$$
\#(F_1/\nu_2F_1)\#(E_2/\nu_2 E_2) = 3 < 3^4 = \#(F_1/\nu_1F_1)\#(E_2/\nu_1 E_2),
$$
which is impossible by Lemma \ref{LemmaNonDec}. Therefore,
$$\mu=2.$$
Our techniques cannot determine $\lambda$ and $\nu$ in this case, but one guess is
$\lambda >0$. Since a positive $\lambda$ corresponds to $A_n$ having unbounded exponent,
having $\lambda>0$ is consistent with the occurrence of a cyclic subgroup of order 16 in $A_2$ and the
cyclic subgroup of order 32 in $A_3$. If this is the case, then Theorem \ref{PropEven} says that $\lambda\ge 2$.

\medskip
\noindent
{$\mathbf{d=11\cdot 19\cdot 23:}$} The class groups here are large enough that it is hard to determine the Iwasawa invariants.
The class groups of $K_0, K_1, K_2$ are
$80\times 4$, $320\times 16\times 4\times 2$, and $1920\times 96\times 8^2\times 4\times 2^3$, respectively, so
$e_1-e_0= 7,  e_2-e_0 = 17$. This yields $2\le  \mu \le 5$.
The occurrence of subgroups $16\times 4$, $64\times 16$, and $128\times 32$ indicates a possibility of $\lambda \ge 2$
(however, Theorem \ref{PropEven} does not apply in this case, so we do not know the parity of $\lambda$).
Of course, computation of $A_3$ should yield some insight, but this computation exceeded the capacity of
our resources.

\medskip
\noindent
{$\mathbf{d=3\cdot 7\cdot 19:}$} The class groups of $K_0, K_1, K_2, K_3$ are
$16\times 4$, $16\times 4^2\times 2$, $16\times 4^4\times 2^3$, and $16\times 8^2\times 4^4\times 2^9$, so
$e_0=6, e_1-e_0=3, e_2-e_0=9, e_3-e_0=21$. This yields $2\le \mu\le 3$. There is not a strong indication of positive $\lambda$,
so possibly $\mu=3$, $\lambda=0$, $\nu=3$, which fits the data well.

\medskip
\noindent
{$\mathbf{d=3\cdot 7\cdot 11 \cdot 19\cdot 23:}$} The class groups of $K_0, K_1, K_2$ are
$16\times 4^2\times 2$, $480\times 8\times 4\times 2^5$, and $960\times 16\times 4\times 2^{13}$,
so      $e_1-e_0 = 6,   e_2-e_0 = 16$.
This yields $4\le \mu \le 5$. The occurrence of subgroups $16\times 4$, $32\times 8$, $64\times 16$
is consistent with $\lambda \ge 2$. 

Suppose $\mu=5$. Then 
$$
2^{e_n-e_0-5(2^n-1)} = 2
$$
for $n=1$ and $n=2$,
which would yield $\lambda=0, \nu = 5$. Therefore, we suspect that $\mu=4$, $\lambda = 2$, and $\nu=5$. 

\medskip
\noindent
{$\mathbf{d=3\cdot 7\cdot 11 \cdot 19\cdot 31:}$} The class groups of $K_0, K_1, K_2$ are
$32\times 4^2 \times 2$, $576\times 8\times 4\times 2^5$, and  $8064\times 336\times 12 \times 2^{13}$, so  
$e_1-e_0 = 6,   e_2-e_0 = 16$. This yields $4\le \mu \le 5$. As in the previous example,
we suspect that $\mu=4$, $\lambda = 2$, and $\nu=6$.

\section{Ambiguous Classes}\label{SectAmbig}

A natural question is whether all of $\mu$ can be explained by Chevalley's formula. In this section, we examine the
ambiguous classes that contribute to this formula with the eventual hope of understanding the full cause of positive $\mu$ invariants.

Let $K/k$ be a cyclic extension of prime degree $p$, where $k$ is a number field with class number prime to $p$,
and let $\tau$ generate $\text{Gal}(K/k)$. Let $A$ be the $p$-Sylow subgroup of the ideal class group of $K$
and let
$$
A^{\tau} = A[1-\tau]
$$
be the {\it ambiguous subgroup} of $A$, namely, the ideal classes fixed by $\tau$. Define a {\it strongly 
ambiguous} ideal class to be an ambiguous ideal class that contains an ideal fixed by $\tau$.
Since the class number of $k$ is assumed to be prime to $p$, the strongly ambiguous classes are generated
by the ramified primes.

Since $k$ has class number prime to $p$, it follows that $N=1+\tau+\tau^2+\cdots +\tau^{p-1}$ annihilates $A$.
Since $N$ acts as $p$ on $A^{\tau}$, we see that $A^{\tau}$ has exponent dividing $p$.

\begin{proposition}\label{ambig} The group of ambiguous ideal classes modulo the strongly ambiguous ideal classes
is isomorphic to $E_k \cap N(K^{\times})/N(E_K)$, where $E_K$ and $E_k$ are the unit groups for $K$ and $k$.
\end{proposition}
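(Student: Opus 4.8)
The plan is to realize the quotient (ambiguous classes)/(strongly ambiguous classes) as a subquotient of a norm-theoretic cohomology group, using the hypothesis that the class number of $k$ is prime to $p$. The key structural input is that, because $h_k$ is prime to $p$, every ambiguous ideal (that is, every ideal $\mathfrak{a}$ of $K$ with $\mathfrak{a}^{1-\tau}$ principal) can be adjusted by a principal ideal so as to become genuinely $\tau$-fixed up to the strongly ambiguous part. Concretely, I would set up the map that sends an ambiguous class to a cohomological invariant and identify its kernel with the strongly ambiguous classes.

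**First I would** fix notation and recall Hilbert 90. Let $\mathcal{P}_K$ be the group of principal ideals of $K$ and $\mathcal{I}_K$ the group of all ideals, so $A$ is (the $p$-part of) $\mathcal{I}_K/\mathcal{P}_K$. Let $\mathcal{I}_0 \subseteq \mathcal{I}_K$ be the subgroup of ideals fixed by $\tau$; these are generated by the ramified and inert primes, and their classes are exactly the strongly ambiguous classes. Given an ambiguous class $[\mathfrak{a}] \in A^\tau = A[1-\tau]$, the ideal $\mathfrak{a}^{1-\tau}$ is principal, say $\mathfrak{a}^{1-\tau} = (\alpha)$ for some $\alpha \in K^\times$. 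Taking the norm, $(\alpha)^{1+\tau+\cdots+\tau^{p-1}} = (\mathfrak{a})^{(1-\tau)N} = (1)$ since $N$ kills $1-\tau$ on ideals, so $N(\alpha)$ is a unit of $k$. This produces a well-defined invariant $[\mathfrak{a}] \mapsto N(\alpha) \bmod N(E_K)$, and I would check that the ambiguity in the choice of $\alpha$ (which can be altered by a unit $u$ of $E_K$) changes $N(\alpha)$ precisely by $N(u) \in N(E_K)$, so the map lands in $E_k/N(E_K)$. The image lies in $E_k \cap N(K^\times)$ because $N(\alpha)$ is visibly a norm from $K$.

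**The main step** is to show this homomorphism $A^\tau \to (E_k \cap N(K^\times))/N(E_K)$ is surjective with kernel exactly the strongly ambiguous classes. For the kernel: if $N(\alpha) = N(u)$ for some $u \in E_K$, replacing $\alpha$ by $\alpha/u$ gives an element of norm $1$, so by Hilbert's Theorem 90 for the cyclic extension $K/k$ we may write $\alpha/u = \beta^{1-\tau}$ for some $\beta \in K^\times$. Then $\mathfrak{a}^{1-\tau} = (\beta)^{1-\tau}$, so $(\mathfrak{a}/(\beta))$ is a $\tau$-fixed ideal, exhibiting $[\mathfrak{a}]$ as strongly ambiguous; conversely strongly ambiguous classes clearly map to $0$. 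For surjectivity: given $\varepsilon \in E_k \cap N(K^\times)$, write $\varepsilon = N(\alpha)$ with $\alpha \in K^\times$. Then $(\alpha)^N = (N\alpha) = (\varepsilon) = (1)$, so the class $[(\alpha)]$ in $A$ is annihilated by $N$, and one checks $(\alpha)$ is ambiguous of the right type; chasing through, the preimage realizes $\varepsilon$. Here is where the hypothesis $p \nmid h_k$ is used decisively, to pass freely between the ideal-theoretic and element-theoretic norms and to guarantee that lifting and Hilbert 90 interact cleanly on the $p$-parts.

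**I expect the main obstacle** to be the surjectivity direction and the careful bookkeeping of which norm (ideal norm versus element norm) is being used at each stage, together with the passage to $p$-Sylow subgroups: one must verify that restricting to the $p$-part does not lose or gain classes, which is exactly what $p \nmid h_k$ secures. A clean way to package the whole argument is to identify both sides as subquotients arising from the long exact sequence in Tate cohomology of $\tau$ acting on the unit and idele-class (or ideal) groups, where $A^\tau/(\text{strongly ambiguous})$ becomes $\hat{H}^0$ of units against the norm, matching $(E_k \cap N(K^\times))/N(E_K)$ by definition; but the elementary Hilbert 90 argument above is self-contained and avoids invoking further machinery.
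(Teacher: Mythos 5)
Your explicit map $[\mathfrak{a}]\mapsto N(\alpha)\bmod N(E_K)$, where $\mathfrak{a}^{1-\tau}=(\alpha)$, is precisely the composite of the two connecting homomorphisms in the paper's proof, which runs Tate cohomology of $\langle\tau\rangle$ through the sequences $1\to E_K\to K^{\times}\to P_K\to 1$ and $1\to P_K\to I_K\to C_K\to 1$; so your route is the paper's argument unwound, not a different one. Your well-definedness and kernel computations are correct (one small addition: changing the representative ideal within its class replaces $\alpha$ by $\alpha\gamma^{1-\tau}$, whose norm is $1$, so the map is well defined on classes, not just on ideals).

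The genuine soft spot is surjectivity. As written, the clause ``the class $[(\alpha)]$ in $A$ is annihilated by $N$'' carries no content: $(\alpha)$ is principal, so its class is trivial, and there is nothing there to ``check is ambiguous.'' What you actually need, given $\varepsilon=N(\alpha)$ with $(N\alpha)=(1)$, is an ideal $\mathfrak{a}$ of $K$ with $\mathfrak{a}^{1-\tau}=(\alpha)$; then $[\mathfrak{a}]$ is an ambiguous class mapping to $\varepsilon$. This is the vanishing of $\hat{H}^{-1}(\langle\tau\rangle, I_K)$: the ideal group is a permutation module, a direct sum over primes $\mathfrak{p}$ of $k$ of modules induced from the decomposition groups, so Shapiro's lemma gives $\hat{H}^{-1}=H^1=0$ (this is exactly the ``$\to 0$'' in the paper's sequence $I_K^{\tau}\to C_K^{\tau}\to H^1(P_K)\to 0$). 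Note also that this is not where $p\nmid h_k$ is ``used decisively,'' contrary to what you suggest; that hypothesis only serves to identify the strongly ambiguous classes with the subgroup generated by ramified primes and to ensure that passing to the $p$-Sylow subgroup loses nothing, since the quotient of ambiguous by strongly ambiguous classes embeds in $E_k/N(E_K)$, which is killed by $p$ because $\varepsilon^p=N(\varepsilon)$ for $\varepsilon\in E_k$. With the induced-module fact supplied, your proof is complete.
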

\begin{proof}  Start with the two exact sequences
\begin{gather*}
1\to E_K\to K^{\times} \to P_K\to 1\\
1\to P_K \to I_K \to C_K\to 1,
\end{gather*}
where $P_K$ denotes the group of principal fractional ideals of $K$, $I_K$ is the group of fractional ideals
of $K$, and $C_K$ is the ideal class group of $K$. 
Since Tate cohomology for the cyclic group $\text{Gal}(K/k)$ is periodic with period 2,
we have the exact sequences
\begin{gather*}
I_K^{\tau} \to C_K^{\tau} \to H^1(P_K) \to 0\\
0\to H^1(P_K) \to E_k/N(E_K) \to k^{\times}/N(K^{\times}),
\end{gather*}
where the cohomology groups are for the group $\langle \tau\rangle$. 
The proposition follows easily. \end{proof}

\begin{corollary}\label{strongambig} The number of strongly ambiguous classes is
$$
\frac{\prod_{\mathcal P \in S} e_P}{p [E_k  : N(E_K)]}.
$$
\end{corollary}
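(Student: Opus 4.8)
The plan is to combine Chevalley's formula (Proposition \ref{Chev}) with the identification of the quotient provided by Proposition \ref{ambig}. First I would apply Proposition \ref{Chev} to the cyclic degree-$p$ extension $K/k$, taking $n=p$ and $h=h_k$ equal to the class number of $k$. This gives the order of the group $C_K^{\tau}$ of all ambiguous ideal classes:
$$
|C_K^{\tau}| = \frac{h_k \prod_{\mathcal P\in S} e_{\mathcal P}}{p\,[E_k : E_k\cap N(K^{\times})]}.
$$

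Next, Proposition \ref{ambig} identifies $C_K^{\tau}$ modulo the subgroup of strongly ambiguous classes with $E_k\cap N(K^{\times})/N(E_K)$, so the number of strongly ambiguous classes equals $|C_K^{\tau}|$ divided by $[E_k\cap N(K^{\times}) : N(E_K)]$. I would then invoke the elementary index identity
$$
[E_k\cap N(K^{\times}) : N(E_K)] = \frac{[E_k : N(E_K)]}{[E_k : E_k\cap N(K^{\times})]},
$$
which is valid because $N(E_K)\subseteq E_k\cap N(K^{\times})\subseteq E_k$ (the norm of a unit is a unit lying in $N(K^{\times})$). Substituting this in cancels the factor $[E_k : E_k\cap N(K^{\times})]$ and yields that the number of strongly ambiguous classes is
$$
\frac{h_k \prod_{\mathcal P\in S} e_{\mathcal P}}{p\,[E_k : N(E_K)]}.
$$

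Finally I would pass to the $p$-Sylow subgroup $A$, in which the counting of strongly ambiguous classes actually takes place. Because $[K:k]=p$ is prime, each ramification index $e_{\mathcal P}$ equals $1$ or $p$, so $\prod_{\mathcal P\in S} e_{\mathcal P}$ is a power of $p$; because $N(u)=u^p$ for $u\in E_k$ we have $E_k^p\subseteq N(E_K)$, so $[E_k : N(E_K)]$ is also a power of $p$; and $h_k$ is prime to $p$ by hypothesis. The strongly ambiguous classes in $A$ form the $p$-Sylow subgroup of the full group of strongly ambiguous classes, so their number is the $p$-part of the displayed quantity. The prime-to-$p$ factor $h_k$ then drops out, leaving $\prod_{\mathcal P\in S} e_{\mathcal P}/\bigl(p\,[E_k : N(E_K)]\bigr)$, which is the asserted formula.

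The step requiring the most care is the bookkeeping between the full group of (strongly) ambiguous classes, to which Proposition \ref{ambig} and Chevalley's formula apply directly, and the $p$-Sylow subgroup $A$ in which the corollary is stated. The factor $h_k$ appears precisely because Chevalley's formula counts \emph{all} ambiguous classes, and it is removed only by the passage to the $p$-part, using that the two remaining factors are powers of $p$ while $h_k$ is prime to $p$; the rest of the argument is the routine index manipulation above.
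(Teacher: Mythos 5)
Your proof is correct and takes essentially the same route as the paper's, which simply says to combine Chevalley's formula with Proposition \ref{ambig} and drop the factor $h$ because the ambiguous classes are taken in the $p$-Sylow subgroup and $h$ is prime to $p$. You merely make explicit the index identity $[E_k\cap N(K^{\times}) : N(E_K)] = [E_k : N(E_K)]/[E_k : E_k\cap N(K^{\times})]$ and the observation that the other factors are powers of $p$, both of which the paper leaves implicit.
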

\begin{proof}
Combine Proposition 1 and Proposition \ref{ambig}. Since our ambiguous classes
are taken only from the $p$-Sylow subgroup, and $k$ has class number prime to $p$, the factor
$h$ is omitted. \end{proof}

Fix a generator $\tau$ of $\text{Gal}(K_\infty/k_\infty)$. We can regard $\tau$ as a generator
of $\text{Gal}(K_n/k_n)$ for each $n$. Then $\tau$ acts on $X$, and this action commutes with the action of $\Lambda$.
Therefore, $X^{\tau} = X[1-\tau]$, the kernel of $1-\tau$, is a $\Lambda$-module. It is the inverse limit
of the groups $A_n^{\tau}$. A natural question
is whether $X/X^{\tau}$ is finite. Our calculations indicate that there are many examples where this is the case
when $p=3$. However, when $p=2$, in several cases it appears that $\lambda$ is positive; if so,
$X/X^{\tau}$ is not finite.

Another way of looking at this is the following. Let $X_0$ be the $\mathbb Z_p$-torsion submodule of $X$.
It is a $\Lambda$-module. Then $X/X_0$ is a free $\mathbb Z_p$-module of rank $\lambda$. Since the kernel
of $1-\tau$ on $X/X_0$ has exponent dividing $p$, this kernel is trivial. In other words, the ambiguous
subgroup lives in the $\mu$ part of $X$, namely $X_0$, and has no obvious effect on $\lambda$.

We now consider $K_n/k_n$.
The same estimates as in the proof of Theorem 1 show that the number of strongly ambiguous classes
of $K_n$ is at least $p^{(s-1)p^n-1}$ and at most $p^{sp^n-1}$.
Therefore, the strongly ambiguous classes contribute either $s-1$ or $s$ to $\mu$.
Proposition \ref{ambig} implies that the ambiguous classes mod the strong classes contribute 0 or 1 to $\mu$.
The total of these two contributions to $\mu$ is $s-1$ or $s$. 
Note that if $\mu=s-1$, then all of the contribution to $\mu$ is from the strongly ambiguous classes.

\begin{proposition}\label{weakambig} Assume that all primes above $p$ are totally ramified in $K_{\infty}/K_0$. Suppose $\mu=s-1$. Then 
the index of the group of strongly ambiguous classes of $A_n$ in the
$p$-torsion of $A_n$ is bounded independent of $n$. Therefore, the order of the group of ambiguous classes in $A_n$ mod the strongly ambiguous
classes is bounded independent of $n$.
\end{proposition}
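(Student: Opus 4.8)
The plan is to sandwich the group $B_n$ of strongly ambiguous classes of $K_n$ between two subgroups of $A_n$ whose orders share the leading term $p^{\mu p^n}$, and then read off that the index is bounded. Since the class number of $k_n$ is prime to $p$, the norm $N=1+\tau+\cdots+\tau^{p-1}$ annihilates $A_n$ and acts as multiplication by $p$ on $A_n^{\tau}$, so $A_n^{\tau}=A_n[1-\tau]$ has exponent dividing $p$. Hence
$$
B_n \subseteq A_n^{\tau} \subseteq A_n[p],
$$
and it suffices to show that $[A_n[p]:B_n]$ is bounded independently of $n$; the final assertion about $A_n^{\tau}/B_n$ then follows at once, since its order divides $[A_n[p]:B_n]$.

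First I would record a lower bound on $\#B_n$. By Corollary \ref{strongambig} together with the estimates in the proof of Theorem \ref{Iw} (there are at least $sp^n$ primes of $k_n$ ramifying in $K_n/k_n$, each with $e_{\mathcal P}=p$, while $E_{k_n}^p\subseteq N(E_{K_n})$ forces $[E_{k_n}:N(E_{K_n})]$ to divide $p^{p^n}$), the number of strongly ambiguous classes satisfies
$$
\#B_n \ge \frac{p^{sp^n}}{p\cdot p^{p^n}} = p^{(s-1)p^n-1} = p^{\mu p^n-1},
$$
where the last equality uses the hypothesis $\mu=s-1$.

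Next I would bound $\#A_n[p]$ from above by controlling the $p$-rank $r_n=\dim_{\mathbb F_p}A_n[p]$. Using $A_n\simeq X/\nu_nY_0$, the finiteness of $X/Y_0\simeq A_0$, and the defining exact sequence $0\to F_1\to Y_0\to E\to F_2\to 0$, I would reduce to computing the $p$-rank of $E/\nu_nE$ for the elementary module $E$. There each summand $\Lambda/(p^{\mu_i})$ yields $\Lambda/(p^{\mu_i},\nu_n)\simeq(\mathbb Z/p^{\mu_i})^{p^n-1}$, of $p$-rank $p^n-1$, and each distinguished summand $\Lambda/(f_j)$ yields a quotient of $\mathbb Z_p^{\deg f_j}$, of $p$-rank at most $\deg f_j$. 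Since the number of $p$-power summands is at most $\mu=\sum_i\mu_i$, the $p$-rank of $E/\nu_nE$ is at most $\mu(p^n-1)+\lambda$. The finite modules $F_1,F_2$ and the finite quotient $A_0$ alter the $p$-rank by only a bounded amount, so $r_n\le \mu(p^n-1)+C$ for a constant $C$ independent of $n$, whence $\#A_n[p]\le p^{\mu p^n-\mu+C}$.

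Combining the two bounds,
$$
[A_n[p]:B_n] \le \frac{p^{\mu p^n-\mu+C}}{p^{\mu p^n-1}} = p^{C-\mu+1},
$$
which is independent of $n$. The main obstacle is the $p$-rank estimate: while the computation of the $p$-rank of $E/\nu_nE$ is direct, transferring it to $A_n$ requires tracking how the finite kernels and cokernels $F_1,F_2$ and the finite quotient $X/Y_0$ affect $p$-ranks. I expect this bookkeeping, carried out via Snake Lemma arguments on the multiplication-by-$p$ and multiplication-by-$\nu_n$ maps (in the spirit of Theorem \ref{PropOrd}), to be the delicate step; the two exponential bounds then match precisely because of the hypothesis $\mu=s-1$.
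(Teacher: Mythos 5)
Your proposal is correct and follows essentially the same route as the paper: both arguments sandwich the strongly ambiguous classes between the Chevalley lower bound $p^{(s-1)p^n-1}$ and an upper bound $\#A_n[p]\le Cp^{\mu p^n+\lambda}$ obtained by computing the $p$-torsion of $E/\nu_nE$ summand by summand and controlling the finite error terms $F_1$, $F_2$, $X/Y_0$ via Snake Lemma bookkeeping (the paper's Lemma \ref{fourterm}). Your $p$-rank formulation is just the order computation $\#M[p]=p^{\operatorname{rank}_p M}$ in different clothing, and the step you flag as delicate is exactly what that lemma handles.
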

\begin{proof}
The following lemma is useful.
\begin{lemma}\label{fourterm} Let $0\to T_n\to U_n \to V_n\to W_n\to 0$, for $n=0, 1, 2, 3, \dots$ be exact sequences of finite abelian groups
and suppose the orders of $T_n$ and $W_n$ are bounded independent of $n$. Let $p$ be a prime and let $U_n[p]$ and $V_n[p]$ denote
the $p$-torsion of $U_n$ and $V_n$. Then $\#U_n[p] / \#V_n[p]$ is bounded above and away from 0, independent of $n$.
\end{lemma}
\begin{proof}
The exact sequence $0\to U_n/T_n \to V_n\to W_n\to 0$ yields, via the Snake Lemma, the exact sequence
$$
0\to (U_n/T_n)[p| \to V_n[p] \to W_n[p].
$$
Since $\#W_n[p]$ is bounded by the order of $W_n$, we find that $\#(U_n/T_n)[p] / \#V_n[p]$ is bounded independent of $n$
(here, and in the rest of the proof of the proposition, we use ``bounded'' to mean bounded both above and away from 0).
The exact sequence $0\to T_n\to U_n\to U_n/T_n\to 0$ yields the exact sequence
$$
T_n[p] \to U_n[p] \to (U_n/T_n)[p] \to T_n/pT_n,
$$
which implies that $\#U_n[p]/\#(U_n/T_n)[p]$ is bounded independent of $n$. 
This yields the lemma. \end{proof}

As in Section 6, there is a submodule $Y_0\subseteq X$ such that $X/\nu_nY_0\simeq A_n$. There is an elementary $\Lambda$-module
$E$ and an exact sequence
$$
0\to F_1\to Y_0 \to E \to F_2\to 0
$$
with $F_1, F_2$ finite. An argument similar to the proof of Lemma \ref{fourterm} implies that
we have exact sequences
$$
0\to T_n\to Y_0/\nu_nY_0 \to E/\nu_nE \to W_n\to 0
$$
for $n=0, 1,2, \dots$, with the orders of $T_n$ and $W_n$ bounded independent of $n$.
Therefore, $\#(Y_0/\nu_nY_0)[p]/\#(E/\nu_nE)[p]$ is bounded independent of $n$.

We also have the exact sequence $0\to Y_0/\nu_nY_0\to X/\nu_nY_0\to X/Y_0\to 0$ with $X/Y_0$ finite.
Since $A_n\simeq X/\nu_nY_0$, we obtain that
$\#A_n[p]/\#(Y_0/\nu_nY_0)[p]$ is bounded independent of $n$.
Therefore $\#A_n[p]/\#(E/\nu_nE)[p]$ is bounded independent of $n$.

Consider a summand $\Lambda/(p^i)$ of $E$. Since $\Lambda/(p^i, \nu_n)$ is finite,
$$
\#(\Lambda/(p^i, \nu_n))[p] = \#(\Lambda/(p^i, \nu_n,p)) = p^{p^n-1}.
$$
The terms of the form $\Lambda/(p^i)$ therefore contribute at most $p^{\mu (p^n-1)}$ to $\#(E/\nu_nE)[p]$, with equality
if and only if each $i$ is 1. 

Now consider a summand $\Lambda/(f)$, where $f$ is a distinguished polynomial. Let $p^n-1\ge \deg f$. Since
$E/\nu_nE$ is finite, $\Lambda/(f, \nu_n)$ is finite. Therefore,
$$
\#(\Lambda/(f, \nu_n))[p] = \#(\Lambda/(f, \nu_n, p))=p^{\deg f}.
$$
The terms of the form $\Lambda/(f)$ therefore contribute $p^{\lambda}$ to $\#(E/\nu_nE)[p]$ when $n$ is sufficiently large.

Putting these together, we find that
$$
\#(E/\nu_nE)[p]/p^{\mu p^n +\lambda}
$$
is bounded above, independent of $n$. 
Therefore,
$$
\#A_n[p]/p^{\mu p^n+\lambda}
$$
is bounded above.

We know that the ambiguous classes in $A_n$ are contained in $A_n[p]$.
Since $[E_{k_n} : N(E_{K_n})] \le p^{p^n}$, Corollary \ref{strongambig} implies that 
$$
p^{(s-1)p^n-1}\le \frac{\prod_{\mathcal P \in S} e_P}{p [E_{k_n}  : N(E_{K_n})]} \le \#A_n[p] \le Cp^{\mu p^n+\lambda}
$$
for some constant $C$, independent of $n$. If $\mu=s-1$, the ratio of the fourth term to the first term is bounded.
Therefore, the ratio of the third term to the second term must be bounded. This completes the proof
of Proposition \ref{weakambig}. \end{proof}

A slight extension of the method of the proof also shows that if $\mu=s-1$, then all summands of $E$ of the form $\Lambda/(p^i)$ have $i=1$. 

An interesting question is whether there can be a contribution from the ambiguous classes mod the strong classes if $\mu=s$.
Also, if $\mu=s$, must there be a contribution from the ambiguous classes modulo the strong classes?

The following shows that in the cases we considered for $p=3$, except for $d=17\cdot 53\cdot 71$, all of the ambiguous classes at the base level $K_0$
are strong. 

\begin{proposition}\label{allstrong3} Let $k_0=\mathbb Q(\sqrt{-3})$ and $K_0=k_0(\sqrt[3]{d})$, where $d$
is cube-free and contains a prime factor $q\ne 3$ with  $q\not\equiv \pm 1\pmod 9$. Then all of the ambiguous classes of $K_0$
are strong.
\end{proposition}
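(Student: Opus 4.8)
By Proposition~\ref{ambig}, the quotient of the ambiguous classes by the strongly ambiguous classes is isomorphic to $E_k\cap N(K_0^\times)/N(E_{K_0})$, where $k=k_0=\mathbb Q(\sqrt{-3})$ and $K_0=k_0(\sqrt[3]{d})$. The plan is therefore to show that this quotient is trivial, i.e.\ that every unit of $k_0$ that is a norm from $K_0^\times$ is already the norm of a unit of $K_0$. Since $k_0=\mathbb Q(\sqrt{-3})$ has unit group $E_{k_0}=\mu_6=\langle -\zeta_3\rangle$ of order $6$, and the extension $K_0/k_0$ has degree $3$, only the $3$-part of this cyclic group is relevant for the norm computation modulo cubes. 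The $-1$ part is automatically a norm of a unit (e.g.\ $N(-1)=-1$ since $-1\in k_0$ is fixed), so the whole question reduces to whether $\zeta_3$ lies in $N(K_0^\times)$.

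First I would fix a prime $q\ne 3$ dividing $d$ with $q\not\equiv\pm1\pmod 9$, and study the local behavior of $K_0/k_0$ at a prime $\mathfrak q$ of $k_0$ above $q$. Since $q$ ramifies in $K_0/k_0$ (it divides $d$), the local extension $(K_0)_{\mathfrak Q}/(k_0)_{\mathfrak q}$ is a ramified cyclic cubic extension. The key step is a local norm computation: I would show that $\zeta_3$ is \emph{not} a local norm at $\mathfrak q$. By the Hasse norm theorem (or simply by comparing with the global norm symbol), if $\zeta_3$ were a global norm from $K_0^\times$ it would be a local norm everywhere, in particular at $\mathfrak q$; so exhibiting one place where it fails to be a local norm shows $\zeta_3\notin N(K_0^\times)$, and hence that the only norms among the units are (up to cubes) the powers of $-1$, all of which are norms of units. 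The condition $q\not\equiv\pm1\pmod 9$ is precisely what governs the ramification type and the residue field at $\mathfrak q$: since $q$ is inert in $k_0/\mathbb Q$ when $q\equiv 2\pmod 3$, the residue field of $(k_0)_{\mathfrak q}$ is $\mathbb F_{q^2}$, and the norm residue symbol of $\zeta_3$ at $\mathfrak q$ is controlled by whether $\zeta_3$ (equivalently, whether cube roots of unity) behave trivially in the tamely ramified local cubic extension. I expect the condition $q\not\equiv\pm1\pmod 9$ to be exactly the arithmetic statement needed to make this local symbol nontrivial.

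The main obstacle will be the local norm symbol calculation at $\mathfrak q$. One must carefully compute the tame norm residue symbol $(\zeta_3,\, d)_{\mathfrak q}$ (or the equivalent Hilbert symbol), track how the residue field $\mathbb F_{q^2}$ and the uniformizer interact, and verify that the hypothesis $q\not\equiv\pm1\pmod 9$ forces this symbol to be a nontrivial cube root of unity rather than $1$. For the tame symbol, $(\zeta_3,\pi)_{\mathfrak q}$ reduces to the image of $\zeta_3$ in the residue field raised to a power determined by valuations, so the computation amounts to checking that $\zeta_3$ is not a cube in the relevant residue field extension, which translates into a congruence condition on $q$ modulo $9$. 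Once this local nonvanishing is established, the Hasse norm theorem closes the argument immediately: $\zeta_3$ fails to be a local norm at $\mathfrak q$, hence fails to be a global norm, so $E_{k_0}\cap N(K_0^\times)=\langle -1\rangle\subseteq N(E_{K_0})$, the quotient in Proposition~\ref{ambig} is trivial, and therefore every ambiguous class of $K_0$ is strongly ambiguous.
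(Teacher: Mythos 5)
Your proposal is correct and takes essentially the same route as the paper: reduce via Proposition~\ref{ambig} to showing $\zeta_3\notin N(K_0^\times)$, then show $\zeta_3$ fails to be a local norm at a ramified prime above $q$, where the hypothesis $q\not\equiv\pm1\pmod 9$ is exactly what forces $\zeta_3$ to lie outside the index-$3$ norm subgroup of the local units (equivalently, not to be a cube in the residue field). The paper phrases this local step as ``the local units have no subgroup of index $9$'' rather than as a tame norm-residue symbol computation, but the content is identical.
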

\begin{proof}  By Lemma \ref{ambig}, we must show that $E_{k_0}\cap N(K_0^{\times})=N(E_{K_0})$.
In fact, we'll show that $\zeta_3\not\in N(K_0^{\times})$, so both sides are $\{\pm 1\}$.
Complete $K_0$ and $k_0$ at primes above $q$.
Since $K_0/k_0$ is ramified at $q$, local class field theory says that the norms of the local units of $K_0$
form a subgroup of index 3 in the local units of $k_0$. Since $q^2- 1\not\equiv 0\pmod 9$, the local units
do not have a subgroup of index 9, so $\zeta_3$ cannot lie in a subgroup of index 3. Therefore, $\zeta_3$ is
not a local norm, hence not a global norm.\end{proof}

If, in addition, $q\equiv 2, 5 \pmod 9$, then $q$ splits completely in $k_n/k_0$. Therefore, for each $n$, 
the proof of the proposition
shows that $\zeta_3$ is not a norm for $K_n/k_n$. However, there are many more units of $k_n$, so
it is not clear how many non-strongly ambiguous classes there are for $K_n$. 

In the proposition, some assumption on $q$ is needed. For example, when $d=51$, the primes of $K_0$ above $3$ and $17$
are principal, so the strongly ambiguous classes are trivial. However, the  class group of $K_0$
is $3\times 3$. Since an element of order 3,  for example $\tau$, acting on a 3-group must have a nontrivial
fixed point, there is a nontrivial ambiguous class that is not strong. Lemma \ref{ambig} implies that
the ambiguous classes mod the strongly ambiguous classes have order at most 3, so the ambiguous classes
have order 3 in this case.

When $p=2$, we have the following.
\begin{proposition}\label{allstrong2}  Let $k_0=\mathbb Q(i)$ and $K_0=k_0(\sqrt{d})$, where $d$
is odd and squarefree and contains a prime factor $q$ with  $q\equiv 5\pmod 8$. Then all of the ambiguous classes of $K_0$
are strong. 
\end{proposition}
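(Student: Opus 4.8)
The plan is to mirror the template of the $p=3$ result, Proposition~\ref{allstrong3}, now with $k_0=\mathbb Q(i)$. By Proposition~\ref{ambig}, the group of ambiguous classes modulo the strongly ambiguous classes is isomorphic to $E_{k_0}\cap N(K_0^{\times})/N(E_{K_0})$, so it suffices to prove that this quotient is trivial, i.e.\ that $E_{k_0}\cap N(K_0^{\times})=N(E_{K_0})$. Here $E_{k_0}=\langle i\rangle$ is the group of fourth roots of unity. Since $i\in k_0$, we have $N_{K_0/k_0}(i)=i^2=-1\in N(E_{K_0})$; combined with the trivial inclusion $N(E_{K_0})\subseteq E_{k_0}\cap N(K_0^{\times})$ (a norm of a unit is a unit and is a norm), the whole statement reduces to showing that $i$ itself is not a global norm from $K_0$. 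Indeed, if $i\notin N(K_0^{\times})$ then $-i=(-1)(-i)$ fails to be a norm as well, forcing $E_{k_0}\cap N(K_0^{\times})=\{\pm1\}=N(E_{K_0})$.

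First I would convert this into a local obstruction at the prime $q$. Because $q\equiv5\pmod8$, in particular $q\equiv1\pmod4$, so $q$ splits in $k_0/\mathbb Q$; fixing a prime $\mathfrak q$ of $k_0$ above $q$ gives $k_{0,\mathfrak q}=\mathbb Q_q$, into which $i$ embeds as a primitive fourth root of unity in $\mathbb Z_q^{\times}$. Since $d$ is squarefree and $q\mid d$, the element $d$ has odd valuation at $\mathfrak q$, so $\sqrt d$ generates a ramified quadratic extension $K_{0,\mathfrak Q}/\mathbb Q_q$. Local class field theory then says the norm group has index $2$ in $\mathbb Q_q^{\times}$, and (since $q$ is odd, so the principal units are all squares and hence norms) a local unit is a norm precisely when its reduction is a square in $\mathbb F_q^{\times}$. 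The reduction of $i$ is a primitive fourth root of unity in the cyclic group $\mathbb F_q^{\times}$ of order $q-1$, which is a square if and only if $8\mid q-1$. As $q\equiv5\pmod8$ gives $q-1\equiv4\pmod8$, the element $i$ is not a square mod $q$, hence not a local norm at $\mathfrak q$, and therefore not a global norm. This yields the desired equality and completes the argument.

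The step requiring the most care is the local class field theory input, not the elementary bookkeeping: I must confirm that $K_{0,\mathfrak Q}/\mathbb Q_q$ is genuinely a \emph{ramified} quadratic extension (rather than split or unramified) and that, for such an extension, the unit norms are exactly the square-reductions. I would secure the latter either by reducing the norm form $x^2-dy^2$ modulo $q$, which collapses to $x^2$ and shows a unit is a norm iff its residue is a quadratic residue, or equivalently by computing the Hilbert symbol $(i,d)_q=\left(\tfrac{i}{q}\right)=-1$. A minor secondary check is that $K_0$ contains no extra roots of unity that would enlarge $E_{k_0}$ or $N(E_{K_0})$; since $d$ is odd this does not occur, so it does not disturb the conclusion.
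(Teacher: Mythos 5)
Your proof is correct and follows essentially the same route as the paper, which simply says to complete at a prime above $q$ (as in the $p=3$ case) and observe that $i$ is not a local norm, hence not a global norm, so that $E_{k_0}\cap N(K_0^{\times})=N(E_{K_0})=\{\pm 1\}$. You supply the details the paper leaves implicit, correctly noting in particular that $q\equiv 5\pmod 8$ splits in $\mathbb Q(i)$ so the local field is $\mathbb Q_q$ and the obstruction is exactly $\left(\tfrac{i}{q}\right)=-1$.
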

\begin{proof} As in the proof of Proposition \ref{allstrong3}, we complete $K_0$ and $k_0$ at primes above $q$
and find that $i$ is not a local norm. Therefore, $i$ is not a global norm, so $E_{k_0}\cap N(K_0^{\times}) = N(E_{K_0}^{\times})= \{\pm 1\}$.
\end{proof}

If $d$ has no prime factor $q\equiv 5\pmod 8$, then $i$ is everywhere a local norm, hence a global norm.
Whether it is a norm of a unit is a harder question. 

\medskip

{\bf Data.} Since $k_n$ has class number prime to $p$ for each $n$, the strongly ambiguous classes for $K_n/k_n$ are generated by the ramified primes
for this extension. Using Pari-GP, we calculated the contributions of these ideals to the class group of $K_n$ for each of the examples
in Section \ref{SectComp}. Corollary \ref{strongambig} then allowed the computation of $[E_{k_n} : N(E_{K_n})]$. 
The results for $p=2$ and for $p=3$ differ in nature.

For $p=2$ and $d\ne 3\cdot 11\cdot 19$, the strongly ambiguous classes have index 2 in $A_n[2]$ in all of the examples, and
the index $[E_{k_n} \, : \, N(E_{K_n})]$ is maximal. That is, the index is $2^{2^n}$, which is also the index of $E_{k_n}^2$ 
in $N(E_{K_n})$. 

In the exceptional case $d=3\cdot 11\cdot 19$,  the group of strongly ambiguous classes equals $A_n[2]$ for
$K_0$ and $K_1$, while the index is 2 for $K_2$ and $K_3$. Correspondingly, $[E_{k_n} \, : \, N(E_{K_n})]$ equals 1 for
$n=0$, equals 2 for $n=1$, and is $2^{2^n}$ for $n=2$ and $n=3$. The index is 1 for $n=0$ because $i\in N(E_{K_0})$.
However, $i=\zeta_8^2\in E_{k_1}^2$, so this does not affect the index for $n=1$. For $n=1$, a calculation shows that
$\zeta_8(1+\sqrt{2})\in N(E_{K_1})$, which causes the index to be 2 instead of 4. However, $\zeta_8(1+\sqrt{2})\in E_{k_2}^2$,
so this does not affect the index for $n=2$. By Proposition \ref{sqrtunit}, this could potentially go on forever.
However, the index is maximal for $n=2$ and $n=3$, after which the computations become too lengthy.

For $p=3$, the index of the strongly ambiguous classes in $A_n[3]$ varies. For example, for $d= 10$, $44$, $46$, and $253$, 
the group of strongly ambiguous classes
equals $A_n[3]$ for $n=0$ and $1$. For $d=22$, $34$, $170$, and several others, they have index $3$ for $n=0$ and $1$.
For $d=94$, they have index 3 for $n=0$ and index 27 for $n=1$. So far, we have not found any predictable behavior analogous to what happens for $p=2$.
However, in all examples from Section \ref{SectComp} with $d\ne 17\cdot 53\cdot 71$, the index $[E_{k_n} \, : \, N(E_{K_n})]$ is maximal.
That is, it equals $3^{3^n}$. For the exceptional $d= 17\cdot 53\cdot 71$, the index is $1$ for $n=0$ and $3^2$ for $n=1$.
This is caused by the fact that $\zeta_3\in N(K_0)$, and hence $\zeta_3\in N(K_n)$ for all $n\ge 0$.

We know that 
$$1\le [E_{k_n} : N(E_{K_n})]\le p^{p^n},$$
and therefore
$$
p^{(s-1)p^n-1}\le \frac{\prod_{\mathcal P \in S} e_P}{p [E_k  : N(E_K)]}\le p^{sp^n-1}.$$
The upper bound corresponds to the classes for the ramified primes being independent in $A_n[p]$, except for the relation that their product is $\sqrt[p]{d}$.
However, the evidence suggest that the unit index is equal to, or very close to, its upper bound, in which case the number of strongly ambiguous classes is at the lower bound.
This means that we should expect the strongly ambiguous classes to contribute exactly $s-1$ to $\mu$. If $\mu > s-1$, there must be another source of ideal classes.
When $p=2$, the index of the strongly ambiguous classes in $A_n[2]$ is usually 2, so it is reasonable to 
guess that it is bounded as $n$ increases. Therefore, it seems unlikely
that the remaining, non-strongly ambiguous classes contribute to $\mu$.
This is perhaps also the case when $p=3$, although the evidence is not as strong. The largest index 
among our examples is $3^4$ for $d=17\cdot 53\cdot 71$ and $n=1$,
where $A_1[3]= 3^{10}$ and the group of strong classes is $3^6$. 

There is a filtration
$$
 A_n[1-\tau] \subseteq A_n[(1-\tau)^2] \subseteq A_n[(1-\tau)^3] \subseteq \cdots \subseteq A_n.
$$
The $k$th {\it higher ambiguous group} is defined to be 
$$A_n[(1-\tau)^k]/A_n[(1-\tau)^{k-1}].$$ 
Since $k_n$ has class number prime to $p$, the norm $1+\tau+\tau^2+\cdots + \tau^{p-1}$ annihilates $A_n$,
so $A_n$ is a module over $\mathbb Z[\zeta_p]$, where $\zeta_p$ acts as $\tau$. Therefore, 
$$A_n[(1-\tau)^{m(p-1)}]=A_n[p^m].$$

For $p=2$, it follows that the 2-rank of the $k$th higher ambiguous group is the number of summands of $A_n$ of the form $2^j$ with $j\ge k$.
Since a term $\Lambda/(4)$ in the elementary $\Lambda$-module associated to $\lim_{\leftarrow} A_n$ corresponds to many summands of 4 in $A_n$, and also
since such a summand makes $\mu$ larger than anticipated, it is possible that the higher ambiguous groups can be used to explain cases where
$\mu > s-1$. What is needed is a formula for the order of the higher ambiguous groups that is analogous to Proposition \ref{Chev}.

For $p=3$, the first and second higher ambiguous groups come from the 3-torsion in $A_n$, so the discrepancy between
the strongly ambiguous classes, which are all the ambiguous classes in many situations by Proposition \ref{allstrong3}, and the 3-torsion $A_n[3]$,
is the 2nd higher ambiguous group. Therefore, this second group measures the fluctuations in the index of the strong
classes in the group $A_n[3]$ that was observed in the data in Section \ref{SectComp} for $p=3$, in contrast to the case $p=2$.
Again, these higher ambiguous groups might yield an explanation of cases where $\mu>s-1$.

\section{$\mathbf{s=1}$}

The case where $s=1$ exhibits some interesting phenomena that are not seen in the previous examples. In this case, we have $\mu\ge s-1=0$, 
so we get no information on $\mu$
from Theorem \ref{Iw}. 

First let's consider the case $p=2$. The base field is $K_0=\mathbb Q(i, \sqrt{q})$ with a prime $q\equiv 3\pmod 4$. Since $q$ is the only prime ramifying in $K_0/k_0$, Proposition \ref{Chev} tells us that
$$
\# A_0[1-\tau] = \frac{2}{2 [\langle i\rangle : N(E_{K_0})]},
$$
so $A_0[1-\tau]=1$. When $\tau$, which has order 2, acts on a non-trivial 2-group, it has a non-trivial fixed point. Therefore, $A_0=1$. 

Computations indicate that 
\begin{itemize}
\item $A_n=1$ for all $n\ge 0$  when $q\equiv 3\pmod 8$.
\item $A_n=2$ for all $n\ge 1$ when $q\equiv 7\pmod {16}$.
\item $A_1\supseteq 4$ and $A_2\supseteq 4\times 2^2$ when $q\equiv 15\pmod{16}$.
\end{itemize}
The observations for $q\equiv 3\pmod 8$ can be proved as follows: The prime above 2 is inert in $K_0/k_0$, so $K_{\infty}/K_0$ is ramified at only one prime and is totally ramified.
Since $A_0=1$, it follows that $A_n=1$ for all $n$.

We have not yet found proofs of the statements for $q\not\equiv 3\pmod 8$.

Here are some examples:

\noindent
$\mathbf{q=7, 23, 71:}$ For these primes, $A_1=2$ and $A_2=2$. Since $e_2-e_1=0$, it follows that $A_n=2$ for all $n\ge 1$.
Therefore,
$$
\mu=0, \quad \lambda =0, \quad \nu=0.
$$
 Since $\tau$ must act trivially on a group of order 2,
 the strong classes are trivial and therefore have index
$2$ in $A_n[2]$ for $n\ge 1$. Proposition \ref{Chev} implies that the index $[E_{k_n} : N(E_{K_n})]$ is $2^{2^n-1}$ for $n\ge 0$, 
which is the half the largest possible value.

\medskip
\noindent
$\mathbf{q=31:}$ We have $A_1=4$, $A_2=8\times 4\times 2$, and $A_3=16\times 8\times 2$. Since $e_1-e_0=2$, we have $0\le \mu\le 2$. 
The structure of the groups suggests
$\lambda =2$ and $\mu=0$. The strong classes are trivial for $n=1, 2$, 
and have order $2^2$ for $n=3$. This implies that $[E_{k_n} : N(E_{K_n})]=2^5$ for $n=3$, which is one-eighth of the largest possible value.

\medskip
\noindent
$\mathbf{q=47:}$ We have $A_1=4$, $A_2=4\times 2^2$, and $A_3= 8^3\times 4\times 2$. Since $e_1-e_0=2$, we have $0\le \mu\le 2$. 
The increasing ranks of the groups suggests the possibility of a positive $\mu$.
 The strong classes are trivial for $n=1, 2$, and have order $2^2$ for $n=3$. This implies 
that $[E_{k_n} : N(E_{K_n})]=2^5$ for $n=3$, which is one-eighth of the largest possible value. The strong classes have the rather large index
$2^3$ in $A_3[2]$. 

\medskip
\noindent
$\mathbf{q=79:}$ We have $A_1=4$, $A_2=16\times 8\times 2$, and $A_3= 16\times 8\times 2$. Since $e_3-e_2=0$, we have 
$$
\mu=0, \quad \lambda =0, \quad \nu=8.
$$
 The strong classes are trivial for $n=1$, and have order $2^2$ for $n=2, 3$. This implies that $[E_{k_n} : N(E_{K_n})]=2^5$ for $n=3$, 
which is one-eighth of the largest possible value. The strong classes have index
$2^2$ in $A_3[2]$. This example is somewhat distressing since it highlights the danger of 
guessing $\mu$ and $\lambda$ from early data. The groups $A_1$ and $A_2$ suggest a positive $\lambda$ or $\mu$.
So it was surprising when $A_3$ had the same order as $A_2$.

\medskip

We now consider $p=3$, so the base field is $K_0=\mathbb Q(\sqrt{-3}, \sqrt[3]{q})$, with a prime $q\equiv 2\pmod 3$.

Two of the cases yield $\mu=\lambda=\nu=0$.
\begin{proposition} Suppose $q\equiv 2 \text{ or } 5\pmod 9$ is prime. 
Then the class number of $\mathbb Q(\zeta_3, \sqrt[3]{q})$ is not a multiple of 3.
\end{proposition}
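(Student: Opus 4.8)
The plan is to prove the stronger statement that the $3$-Sylow subgroup $A_0$ of the ideal class group of $K_0=\mathbb Q(\zeta_3,\sqrt[3]{q})$ is trivial. Write $k_0=\mathbb Q(\zeta_3)$ and let $\tau$ generate $\mathrm{Gal}(K_0/k_0)$, a cyclic group of order $3$. When an element of order $3$ acts on a nontrivial finite $3$-group it has a nontrivial fixed point, so $A_0\ne 1$ would force the ambiguous subgroup $A_0^{\tau}=A_0[1-\tau]$ to be nontrivial. Hence it suffices to show that the ambiguous classes are trivial, and I would count them with Chevalley's formula (Proposition \ref{Chev}) applied to $K_0/k_0$, using that $k_0$ has class number $1$.

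Next I would assemble the terms of the formula. The prime above $q$ is inert in $k_0/\mathbb Q$ (because $q\equiv 2\pmod 3$), so it is a single prime of $k_0$ ramifying in $K_0/k_0$ with ramification index $3$; and since $q\equiv 2$ or $5\pmod 9$ gives $q\not\equiv\pm1\pmod 9$, the discriminant computation of Section \ref{SectTotRam} (where $v_3(N\mathcal D)=4\ne 0$) shows that the prime above $3$ also ramifies, necessarily with index $3$. Thus $\prod_{\mathcal P}e_{\mathcal P}=9$, and Chevalley's formula gives
$$
|C^{G}|=\frac{1\cdot 9}{3\,[E_{k_0}:E_{k_0}\cap N(K_0^{\times})]}=\frac{3}{[E_{k_0}:E_{k_0}\cap N(K_0^{\times})]}.
$$
To evaluate the unit index, note that $E_{k_0}$ is cyclic of order $6$, generated by $-\zeta_3$, so $E_{k_0}/E_{k_0}^3$ is cyclic of order $3$, generated by the class of $\zeta_3$. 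Because $N_{K_0/k_0}(a)=a^3$ for every $a\in k_0^{\times}$, we have $E_{k_0}^3\subseteq N(K_0^{\times})$, so the index is $1$ or $3$ and equals $3$ exactly when $\zeta_3\notin N(K_0^{\times})$. The hypotheses of Proposition \ref{allstrong3} hold with $d=q$, and its proof shows precisely that $\zeta_3$ is not a local norm at $q$, hence not a global norm. Therefore the index is $3$, so $|C^{G}|=1$. Since $A_0^{\tau}\subseteq C^{G}$, this gives $A_0^{\tau}=1$, whence $A_0=1$ and $3\nmid h(K_0)$.

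The one genuinely arithmetic input is the non-norm assertion $\zeta_3\notin N(K_0^{\times})$, which I expect to be the crux; but it is already furnished by the local class field theory argument in the proof of Proposition \ref{allstrong3}, so here it reduces to verifying the hypotheses. The point requiring care is the ramification at $3$: the computation yields $|C^{G}|=1$ only because $\prod_{\mathcal P}e_{\mathcal P}=9$ rather than $3$, so the fact that the prime above $3$ ramifies (guaranteed by $q\not\equiv\pm1\pmod 9$) is indispensable.
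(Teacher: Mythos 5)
Your proof is correct and follows essentially the same route as the paper: Chevalley's formula for $K_0/k_0$ with $\prod e_{\mathcal P}=9$, the local class field theory argument at $q$ (using $9\nmid q^2-1$) to get unit index $3$, and the fixed-point argument for a $3$-group acting on a $3$-group. The only difference is that you spell out the ramification at $3$ and the unit-index reduction in more detail than the paper does.
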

\begin{proof} Since $q$ is inert in $\mathbb Q(\zeta_3)$, the local units in $\mathbb Z_q[\zeta_3]$ are the product of a cyclic group of
order $q^2-1$ and a profinite $q$-group. Since $q^2-1\not\equiv 0\pmod 9$, there is only one subgroup of index 3 and it does not contain
$\zeta_3$. Since $q$ is ramified in $\mathbb Q(\zeta_3, \sqrt[3]{q})/\mathbb Q(\zeta_3)$, the local norms from the completion of
this extension are powers of $q$ times elements from a subgroup of index 3 in the local units. In  particular, $\zeta_3$ is not a local
norm, hence is not a global norm. In Proposition \ref{Chev} with $K=\mathbb Q(\zeta_3)$ and $L=\mathbb Q(\zeta_3, \sqrt[3]{q})$,
the unit group $E$ is generated by $\zeta_3$, so $[E : E\cap N(L^{\times})] = 3$. The product of the ramification indices is $3^2$.
Therefore, $A_0^{\tau}$ has order 1. But a 3-group acting on a non-trivial 3-group always has a non-trivial fixed point. 
If $A_0$ had a non-trivial 3-subgroup
then $A_0^{\tau}$ would be non-trivial. Therefore, the class number of $\mathbb Q(\zeta_3, \sqrt[3]{q})$ is not a multiple of 3. \end{proof}

\begin{corollary} Let $k_{\infty}/k_0$ be the anticyclotomic $\mathbb Z_3$-extension of $k_0=\mathbb Q(\sqrt{-3})$. Let $q\equiv 2 \text{ or } 5\pmod 9$ be prime, let $K_0= \mathbb Q(\zeta_3, \sqrt[3]{q})$,
and let $K_n=k_n(\sqrt[3]{q})$. Then the class number of $K_n$ is prime to 3 for all $n\ge 0$.
In particular, $\mu=\lambda=\nu=0$ for the $\mathbb Z_3$-extension $K_{\infty}/K$.
\end{corollary}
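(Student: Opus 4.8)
The plan is to reduce the Corollary to the vanishing of the Iwasawa module $X=\lim_{\leftarrow} A_n$, following the same pattern used above for $p=2$, $q\equiv 3\pmod 8$. The preceding Proposition already gives $A_0=1$ (the class number of $K_0=\mathbb Q(\zeta_3,\sqrt[3]{q})$ is prime to $3$), so it suffices to prove that $K_\infty/K_0$ is ramified at exactly one prime and is totally ramified there. Granting this, the relation $X/TX\simeq A_0$ used in the proof of Theorem \ref{PropEven} (and coming from \cite[Lemma 13.15]{W}) gives $X/TX=0$, so $X=TX$ with $T$ in the maximal ideal of $\Lambda$, and Nakayama's lemma forces $X=0$. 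Since $A_n\simeq X/\nu_n Y_0$ with $Y_0\subseteq X$, we get $A_n=1$ for all $n$, so the class number of $K_n$ is prime to $3$; moreover the characteristic ideal of $X$ is then a unit, so $\mu=\lambda=\nu=0$.

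First I would record the local arithmetic at $3$. Since $q\equiv 2$ or $5\pmod 9$, we have $q\equiv 2\pmod 3$, so $q$ is inert in $k_0/\mathbb Q$ (and $3$ is non-split, as the setup requires), and $q\not\equiv\pm 1\pmod 9$, so by the discriminant of $\mathbb Q(\sqrt[3]{q})$ the prime $3$ is totally ramified in $K_0/k_0$. In particular $K_0$ has a unique prime above $3$, with residue field $\mathbb F_3$.

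Next I would prove total ramification of $K_\infty/K_0$ at this prime. The anticyclotomic tower $k_\infty/k_0$ is totally ramified at the unique prime of $k_0$ above $3$, so its residue field remains $\mathbb F_3$; since $K_0/k_0$ is also totally ramified at $3$, the compositum $K_\infty=K_0k_\infty$ has residue field $\mathbb F_3$ at the prime above $3$ as well. As $K_\infty/K_0$ is a $\mathbb Z_3$-extension (only primes above $3$ can ramify) with trivial residue degree at its unique prime above $3$, it is totally ramified there. Concretely, the content is that $K_0$ and $k_\infty$ are linearly disjoint over $k_0$ locally at $3$: the only cubic subfield of the $\mathbb Z_3$-tower $k_\infty/k_0$ is $k_1=k_0(\sqrt[3]{3})$, and a valuation count (the valuation of $q$ is $0$ while that of $3$ is nonzero mod $3$) shows $k_0(\sqrt[3]{q})\ne k_1$ after completing at $3$, so $\sqrt[3]{q}\notin k_\infty$ locally.

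The only step needing genuine care is this total ramification, and it is exactly where the hypothesis $q\equiv 2,5\pmod 9$ is used: it forces $K_0/k_0$ to be totally ramified (rather than unramified) at $3$, so no residue extension is introduced in $K_0k_1$ and the potential unramified cubic inside $\mathrm{Gal}(K_0k_1/k_0)\cong\mathbb Z/3\times\mathbb Z/3$ cannot occur. Once total ramification at the single prime is in hand, the Iwasawa-theoretic endgame ($X/TX\simeq A_0=0$, Nakayama, $X=0$) is immediate, and the Corollary follows.
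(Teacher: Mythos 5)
Your route differs from the paper's, and the difference matters. The paper's proof is a one\--line application of the standard fact that if $L/K$ is Galois with $p$\--group Galois group, at most one prime of $K$ ramifies in $L/K$, and $p\nmid h_K$, then $p\nmid h_L$; applied to $K_n/K_0$ (only the unique prime of $K_0$ above $3$ can ramify, and it is unique because $3$ is totally ramified in $K_0/\mathbb Q$ when $q\not\equiv\pm 1\pmod 9$), this needs neither total ramification nor any Iwasawa module theory. Your reduction via $X/TX\simeq A_0=0$ and Nakayama is the strategy the paper uses for $p=2$, $q\equiv 3\pmod 8$, and your endgame is fine, but it forces you to prove that $K_\infty/K_0$ is \emph{totally} ramified at the prime above $3$, a strictly stronger local statement than the paper's argument requires.

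That is where there is a genuine gap. The inference ``$k_\infty/k_0$ and $K_0/k_0$ are both totally ramified at $3$, hence the compositum has residue field $\mathbb F_3$'' is false in general: a compositum of totally ramified extensions can acquire a residue extension (e.g.\ $\mathbb Q_2(\sqrt{2})$ and $\mathbb Q_2(\sqrt{10})$ are totally ramified, but their compositum contains the unramified field $\mathbb Q_2(\sqrt{5})$). Your valuation/linear\--disjointness computation rules out splitting and shows the local degree of $K_{n}$ over the completion of $k_0$ is the full $3^{n+1}$, but with $ef=3^{n+1}$ and $e\ge 3^n$ forced by $k_{n}\subseteq K_n$ one could still have $f=3$; nothing you wrote excludes residue degree $3$. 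What must actually be checked is that no cubic subextension of $K_0k_1/k_0$ is unramified at $3$: the three with Kummer generator $3^aq^b$, $a\not\equiv 0\pmod 3$, are ramified because $v(3^aq^b)=2a\not\equiv 0\pmod 3$ in the completion of $k_0$, and the remaining one is $K_0$ itself, ramified at $3$ precisely because $q\not\equiv\pm 1\pmod 9$; total ramification of $K_1/K_0$ follows, and then the inertia subgroup of $\mathrm{Gal}(K_\infty/K_0)$, surjecting onto that of $K_1/K_0$, is everything. This is exactly the Proposition of Section \ref{SectTotRam} specialized to $d=q$, which you could simply cite. Your parenthetical about the ``potential unramified cubic'' gestures at the right argument, but as written it only rules out $K_0$ being that cubic and leaves the mixed subfields, and more generally the possibility of an unramified piece appearing only in the compositum, unaddressed.
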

\begin{proof} Since there is only one prime ramified in the 3-extension $K_n/K$ and 3 does not divide the class number of $K$,
it follows that 3 does not divide the class number of $K_n$. \end{proof}

We now consider $q\equiv 8\pmod 9$. Then the prime above 3 is unramified in $K_n/k_n$ for all $n\ge 0$, so the primes above $q$ are the only primes that ramify in $K_n/k_n$. Proposition \ref{Chev}
implies that $A_0=1$, just as in the case $p=2$ considered above. Only $K_1$ has degree small enough to perform calculations, but we can make guess for $K_2$ based on the ideas of Section \ref{conj}. The results are the following
(we have not yet discovered the rule for dividing the primes $q$ into these sets corresponding to the various possibilities for $A_1$):

\medskip
\noindent
$\mathbf{q= 17, 53, 71, 107, 179, 197, 233, 251, 359, 467, 503, 557, 701, 863, 881, 953, 971}$: We have $A_1=9\times 3$. The strong classes have order 3 and have index 3 in $A_1[3]$.
The unit index is 3. We have $e_1-e_0=3$, so $0\le \mu \le 1$, but it is not clear how to decide which is the correct value.

In the notation Section \ref{conj}, we have $3^2\mid\mid h_2'$ for each of these values of $d$. It seems unlikely that $h_2=(h_2')^2$ since these are primes that are $-1\pmod 9$, but suppose
that $h_2\mid 3^4 (h_2')^2$. Then $e_2\le 8$. If we take the base field of the $\mathbb Z_3$-extension as $K_1$, then $\mu$ becomes $3\mu$, $e_2$ becomes $e_1\le 8$, and $e_1$ becomes $e_0=3$.
We have the inequality $8-3\ge e_1-e_0\ge 3\mu (3^1-1)$, which yields $\mu=0$.

\medskip
\noindent
$\mathbf{q= 89, 431, 449, 593, 647, 683, 719, 773}$: We have $A_1=3^2$. The strong classes are trivial and the unit index is therefore $3^2$.  Again, we have $0\le \mu\le 1$.

For $d=89$ and $449$, we have $3^5\mid\mid h_2'$. We would need $h_2\mid (1/27) (h_2')^2$ for our inequalities to imply that $\mu=0$.  

For $d=431, 647, 719$, we have $3^2\mid\mid h_2'$. If $h_2\mid 3^3 (h_2')^2$, we can deduce that $\mu=0$. 

For $d=683$ and $773$, we have $3^4\mid\mid h_2$. We would need $h_2\mid (1/3) (h_2')^2$ for our inequalities to imply that $\mu=0$.  

For $d=593$, the computation of $h_2'$ took too long, so we do not have its value.

\medskip
\noindent
$\mathbf{q=269, 521, 809}$: We have $A_1=9^2\times 3^2$. The strong classes are trivial and the unit index is therefore $3^2$. We have $0\le \mu\le 3$.

For $d=269$ and $809$, we have $3^6\mid\mid h_2'$, and for $d=521$ we have $3^9\mid\mid h_2'$. It seems likely that $\mu\le 1$ for $269$ and $809$, and $\mu\le 2$
for $521$.

\medskip
\noindent
$\mathbf{q=827}$: We have $A_1= 27\times 9\times 3^2$. The strong classes have order 3 and the unit index is therefore $3$. We have $0\le \mu \le 3$.

We have $3^6\mid\mid h_2'$. If $h_2\mid (h_2')^2$, our inequalities, applied with $K_1$ as the base of the $\mathbb Z_3$-extension as in the cases of $q=17, 53, \dots$ above, 
 yield $\mu=0$.


\begin{thebibliography}{16}

\bibitem{Chang}
Mu-Ling Chang,  Non-monogeneity in a family of sextic fields, J. Number Theory 97 (2002), 252-268.

\bibitem{Ch}
Claude Chevalley, Sur la th\'eorie du corps de classes dans les
corps finis et les corps locaux, J. Faculty Sci. Tokyo
2 (1933), 365-476.

\bibitem{Dedekind}
Richard Dedekind, Ueber die Anzahl der Idealklassen in reinen kubischen Zahlk\"orpern, J. reine angew. Math. 121 (1900), 40-123.

\bibitem{FW}
Bruce Ferrero and Lawrence C.\ Washington, The Iwasawa invariant $\mu_{p}$ vanishes for abelian number
   fields, Ann. of Math. (2), 109 (1979), 377-395.

\bibitem {Gold}
Robert Gold, Examples of Iwasawa invariants, Acta Arith. 26 (1974/75), 21-32.
 
\bibitem{Honda} Taira Honda, Pure cubic fields whose class numbers are multiples of three, J. Number Theory 3 (1971), 7-12.

\bibitem{Iw} Kenkichi Iwasawa, On the $\mu$-invariants of $Z_{\ell}$-extensions,
   Number theory, algebraic geometry and commutative algebra, in
      honor of Yasuo Akizuki, Kinokuniya, Tokyo, 1973, 1-11.
 
\bibitem {Lem} Franz Lemmermeyer, The ambiguous class number formula revisited,
J. Ramanujan  Math. Soc. 28 (2013), no. 4, 415-421.

\bibitem {Marcus}
Daniel A. Marcus, Number Fields, Springer-Verlag, New York-Heidelberg, 1977.

\bibitem {Oh-Kim} Jae Moon Kim and Jangheon Oh, Defining polynomial of the first layer of anti-cyclotomic $\mathbb Z_3$-extension
of imaginary quadratic fields of class number 1, Proc.\ Japan Acad.\ 80, Ser. A (2004), 18-19.

\bibitem{Pari} The PARI Group, PARI/GP version {\tt 2.7.1}, Bordeaux, 2014, {http://pari.math.u-bordeaux.fr}.


\bibitem {Schoof}
Ren\'e Schoof, Personal communication, June 2016.

\bibitem{VH}
Yannick Van Huele, On $T$-semisimplicity of
Iwasawa modules and some computations with $\mathbb Z_3$-extensions, Thesis (Ph.D.), University of Washington, 2016.


\bibitem{washthesis} Lawrence C.\ Washington, Class numbers and $\mathbb Z_p$-extensions, Math. Ann. 214 (1975), 177-193.

\bibitem {W}
Lawrence C.\ Washington, Introduction to Cyclotomic Fields, 2nd ed., Springer-Verlag, New York, 1996.
\end{thebibliography}
\end{document}